\newtheorem{pro}{Proposition}[section]
\newtheorem{teo}[pro]{Theorem}
\newtheorem{defi}[pro]{Definition}
\newtheorem{lem}[pro]{Lemma}
\newtheorem{cor}[pro]{Corollary}
\newtheorem{remark}[pro]{Remark}
\newtheorem{ex}[pro]{Example}
\newcommand{\pd}{{\mathrm{pd}}}
\newcommand{\modu}{{\mathrm{mod}}}
\newcommand{\proj}{{\mathrm{proj}}}
\newcommand{\findim}{{\mathrm{fin.dim}}}
\newcommand{\rk}{{\mathrm{rk}}}
\newcommand{\Phidim}{{\Phi\,\mathrm{dim}}}
\newcommand{\Psidim}{{\Psi\,\mathrm{dim}}}
\newcommand{\add}{{\mathrm{add}}}
\newcommand{\D}{\mathcal{D}}
\newcommand{\T}{\mathcal{T}}
\newcommand{\U}{\mathcal{U}}
\newcommand{\K}{\mathbb{K}}
\newcommand{\C}{\mathcal{C}}
\newcommand{\E}{\mathcal{E}}
\newcommand{\M}{\mathcal{M}}
\newcommand{\pj}{\mathcal{P}}
\author{Marcelo Lanzilotta$^{1}$ \\ Jos\'e Vivero$^{2}$}
\thanks{$^1$ Instituto de Matem\'atica y Estad\'istica ``Rafael Laguardia", Universidad de la Rep\'ublica, Uruguay.}
\thanks{$^2$ Department of Mathematics, London School of Economics and Political Science, United Kingdom.}
\begin{document}
\title{Generalised Lat-Igusa-Todorov Algebras and Morita Contexts}
\renewcommand{\shortauthors}{M. Lanzilotta, J. Vivero}
\email{marclan@fing.edu.uy}
\email{j.a.vivero-gonzalez@lse.ac.uk}
\maketitle

\begin{abstract} In this paper we define (special) GLIT classes and (special) GLIT algebras. We prove that GLIT algebras, which generalise Lat-Igusa-Todorov algebras, satisfy the finitistic dimension conjecture and give several properties and examples. In addition we show that special GLIT algebras are exactly those that have finite finitistic dimension. Lastly we study Morita algebras arising form a Morita context and give conditions for them to be (special) GLIT in terms of the algebras and bimodules used in their definition. As a consequence we obtain simple conditions for a triangular matrix algebra to be (special) GLIT and also prove that the tensor product of a GLIT $\mathbb{K}$-algebra with a path algebra of a finite quiver without oriented cycles is GLIT.\\

Keywords: GLIT algebra, finitistic dimension conjecture, Morita context, Igusa-Todorov functions.\\

AMS Subject Classification: 16E05; 16E10; 16G10.
\end{abstract}

%-------------------------------------------------------------------------------------
\section{Introduction}
%-------------------------------------------------------------------------------------

This work is framed in the theory of representations of Artin algebras with a particular interest in the finitistic dimension ($\findim$) conjecture, which states that for a given Artin algebra $\Lambda$ there is a uniform bound for all projective dimensions of all finitely generated $\Lambda$-modules with finite projective dimension. This conjecture can be traced back to the work of H. Bass, particularly to the article \cite{Bass}. For a complete survey of the conjecture, up to 1995, we refer the reader to \cite{Z}.\\

In relation to the $\findim$ conjecture, K. Igusa and G. Todorov in \cite{IT} defined the now called Igusa-Todorov functions (IT functions) and used them to prove, among other important results, that all algebras with representation dimension smaller or equal than $3$ satisfy the $\findim$ conjecture. This result is quite interesting since the concept of representation dimension, defined by M. Auslander in \cite{Aus}, has been extensively studied in connection with several problems in Representation Theory. In \cite{W} J. Wei defined the concept of Igusa-Todorov algebra (IT algebra) and used the IT functions to prove they satisfy the $\findim$ conjecture. He provided an extensive list of IT algebras such as monomial algebras, special biserial algebras, tilted algebras and algebras with radical cube zero, among others. The question of whether all Artin algebras are IT was answered in 2016 by T. Conde in her Ph.D. thesis \cite{TC}. She used some results proved by R. Rouquier in \cite{R} to exhibit as a counterexample a family of algebras that are not IT, but do satisfy the $\findim$ conjecture, namely exterior algebras of vector spaces of dimension greater or equal than $3$.\\ 

In \cite{BLMV} the authors defined generalised IT functions, which gave way to a generalisation of the concept of Igusa-Todorov algebra. This new class of algebras, named Lat-Igusa-Todorov (LIT for short), satisfies the $\findim$ conjecture and strictly contains the class of IT algebras, since it includes all self-injective algebras. Note that the example provided by Conde is a family of self-injective algebras that are not IT. Among LIT algebras it is possible to find algebras that are not IT, neither self-injective. However not all Artin algebras are LIT. The definition of LIT algebra has the disadvantage of imposing the existence of a class of modules $\D\subseteq \modu\,\Lambda$ that is additively closed $(\add\,\D=\D)$, syzygy invariant $(\Omega(\D)\subseteq\D)$ and $\Phidim(\D)=0$, where $\Phi$ is the first IT function defined in \cite{IT}. Even though there are many well known classes with those properties, like Gorenstein projectives and modules that are left orthogonal to $\Lambda$, there are examples of algebras where a subcategory with the above properties can only be composed of projective modules. This can be seen in \cite{Barrios2}, where the authors find examples of algebras that are not LIT.\\ 

The main objective of this article is to give a way to extend the definition of LIT algebra, without losing the property of satisfying the $\findim$ conjecture. Our quest takes us to the concepts of (special) GLIT classes and (special) GLIT algebras. We prove that GLIT algebras satisfy the finitistic dimension conjecture and that the known examples of algebras that are not LIT, turn out to be GLIT. On the other hand, the concept of special GLIT algebra is actually equivalent to the algebra having finite finitistic dimension.\\

As a way to explore the scope of the developed theory, we study Morita rings arising from a Morita context. Morita contexts, also known as pre-equivalence data, have been introduced by H. Bass in (\cite{Bass2}, see also \cite{Cohn}) in relation to the Morita Theorems on equivalences of module categories. Let $T$ and $U$ be Artin algebras, a Morita context over $T,U$ is a $6$-tuple $\mathcal{M}=(T,N,M,U,\alpha,\beta)$, where $_{U}M_T$ is a $U$-$T$-bimodule, $_{T}N_U$ is a $T$-$U$-bimodule and the maps $\alpha: M\otimes_T N\rightarrow U$, $\beta: N\otimes_U M\rightarrow T$ are $U$-$U$ and $T$-$T$-bimodule homomorphisms respectively, satisfying the following associativity conditions, $\forall m,m'\in M, \forall n,n'\in N$:
$$\alpha(m\otimes n)m' = m\beta(n\otimes m')\ \ \text{and}\ \ \beta(n\otimes m)n' = n\alpha(m\otimes n').$$
Associated to any Morita context $\mathcal{M}$ as above, there is the Morita ring of $\mathcal{M}$, which incorporates all the information involved in the 6-tuple and is defined to be the formal $2\times 2$ matrix ring
$$\Lambda_{(\alpha,\beta)}(\mathcal{M})=\begin{pmatrix}
	T & N \\ M & U
\end{pmatrix},$$
where matrix multiplication is given by 
$$\begin{pmatrix}
	t & n \\ m & u
\end{pmatrix} \cdot \begin{pmatrix}
t' & n' \\ m' & u'
\end{pmatrix} = \begin{pmatrix}
tt' + \beta(n\otimes m') & tn' + nu' \\ mt' + um' & \alpha(m\otimes n') + uu'
\end{pmatrix}$$
We would like to note that the Morita ring of a Morita context should not to be confused with the notion of a (right or left) Morita ring appearing in Morita duality.\\

Since we are interested in Artin algebras, the following result characterises when a Morita ring is an Artin algebra.

\begin{pro}\cite[Prop. 2.2]{Psa}
	Let $\Lambda_{(\alpha,\beta)}(\mathcal{M})=\begin{pmatrix}
		T & N \\ M & U
	\end{pmatrix}$ be a Morita ring. Then $\Lambda_{(\alpha,\beta)}(\mathcal{M})$ is an Artin algebra if and only
	if there is a commutative Artin ring R such that T and U are Artin R-algebras and M and N are finitely
	generated over R which acts centrally both on M and N.
\end{pro}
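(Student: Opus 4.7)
The plan is to work directly from the definition of Artin algebra: a ring $\Lambda$ is an Artin algebra precisely when there exists a commutative Artin ring $R$ together with a ring map $R\to Z(\Lambda)$ making $\Lambda$ finitely generated as an $R$-module. The key preliminary step is to compute the center of $\Lambda_{(\alpha,\beta)}(\M)$ explicitly, since both directions of the equivalence then reduce to unpacking this description.

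To compute $Z(\Lambda)$, I would take an arbitrary element $\begin{pmatrix} t_0 & n_0 \\ m_0 & u_0 \end{pmatrix}$ and impose commutativity with each of the four ``elementary'' blocks (the matrices whose only nonzero entry is $1_T$, $n\in N$, $m\in M$, or $1_U$). Commutativity with the diagonal idempotents forces $n_0=0$ and $m_0=0$; commutativity with the $(1,1)$- and $(2,2)$-blocks forces $t_0\in Z(T)$ and $u_0\in Z(U)$; and commutativity with the off-diagonal blocks yields the compatibility relations $t_0n=nu_0$ for all $n\in N$ and $u_0m=mt_0$ for all $m\in M$. Thus $Z(\Lambda)$ is identified with the set of pairs $(t_0,u_0)\in Z(T)\times Z(U)$ satisfying these two relations.

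For the forward direction, a commutative Artin subring $R$ of $Z(\Lambda)$ produces, via the two diagonal projections, ring maps $R\to Z(T)$ and $R\to Z(U)$ that turn $T$ and $U$ into $R$-algebras; the compatibility relations above are precisely the statement that $R$ acts centrally on the bimodules $N$ and $M$. The decomposition $\Lambda\cong T\oplus N\oplus M\oplus U$ as $R$-modules, combined with the fact that direct summands of finitely generated modules over a Noetherian ring are finitely generated, shows that each of the four pieces is finitely generated over $R$. For the converse, given $R$ with the stated properties one defines the $R$-action on $\Lambda$ entrywise; centrality of the $R$-action on $T$ and $U$ together with the two bimodule compatibility conditions place the image of $R$ inside the center computed above, and entrywise finite generation yields finite generation of $\Lambda$ over $R$.

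The only subtlety to check carefully is that the structure maps $\alpha$ and $\beta$ respect the $R$-action on the tensor products $M\otimes_T N$ and $N\otimes_U M$. This reduces to observing that the centrality conditions force the two natural $R$-actions on each bimodule (one via $T$, one via $U$) to coincide, so that $M\otimes_T N$ and $N\otimes_U M$ acquire unambiguous $R$-module structures; $R$-linearity of $\alpha$ and $\beta$ is then automatic from them being $U$-$U$- and $T$-$T$-bimodule maps respectively. With this checked, both implications follow immediately from the explicit description of $Z(\Lambda)$.
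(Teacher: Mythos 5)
The paper does not prove this proposition; it merely cites it as \cite[Prop.\ 2.2]{Psa}, so there is no in-paper argument to compare against. That said, your proposal is correct, and the route you take --- compute $Z(\Lambda_{(\alpha,\beta)})$ directly by imposing commutation against the two diagonal idempotents and the four elementary blocks, then unwind the definition of Artin $R$-algebra --- is the natural and standard one (and presumably essentially what Green--Psaroudakis do). The center computation is right: the idempotents kill the off-diagonal entries of a central element regardless of $\alpha,\beta$ (the cross terms $\beta(n_0\otimes m)$, $\alpha(m_0\otimes n)$ then vanish), the diagonal tests force $t_0\in Z(T)$, $u_0\in Z(U)$, and the off-diagonal tests yield exactly the two compatibility relations $t_0n=nu_0$ and $u_0m=mt_0$, which are precisely the statement that the left and right $R$-actions on $N$ and $M$ agree.

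Two small remarks. First, you don't need Noetherianity to conclude that $T,N,M,U$ are finitely generated over $R$: a direct summand of a finitely generated module is a quotient of it, hence finitely generated over any ring; so the Noetherian appeal is harmless but superfluous. Second, the ``subtlety'' you flag about $\alpha$ and $\beta$ respecting the $R$-action on $M\otimes_T N$ and $N\otimes_U M$ is not actually needed for either implication: the claim is purely about the existence of a ring map $R\to Z(\Lambda)$ with $\Lambda$ finitely generated over $R$, and $\alpha,\beta$ never enter once the center is described. It is a true and reassuring observation, but it is orthogonal to the proof.
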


Given a Morita ring arising from a Morita context, we prove that if $T,U$ are GLIT algebras, $_{U}M, M_T, _{T}N, N_U$ are projective modules and $M\otimes_TN=N\otimes_UM=0$, then the Morita ring is a GLIT Artin algebra. As a particular case we obtain that a triangular algebra of the form $\begin{pmatrix}
	T & 0 \\ M & U
\end{pmatrix}$ is GLIT provided the algebras in the diagonal are GLIT and $_{U}M, M_T$ are projectives. As a consequence we prove that the tensor product of a GLIT $\mathbb{K}$-algebra with a path algebra of a finite quiver without oriented cycles is GLIT.\\ 

A strong motivation for the study of tensor products is linked to the existence of Artin algebras of infinite $\Phi$-dimension (see \cite{Barrios} and \cite{Hanson}). In particular, the example provided by E. Hanson and K. Igusa in \cite{Hanson} is a $\mathbb{K}$-algebra that is a tensor product of two GLIT $\mathbb{K}$-algebras and in that particular case it is again a GLIT algebra, so it satisfies the finitistic dimension conjecture, even though its $\Phi$-dimension is infinite. A natural question that arises here is if the tensor product of two GLIT algebras is again GLIT. Our results provide a partial answer and establish a foundation for future research on the topic.\\

After this introduction, the reader will find Section 2 containing the necessary background material to understand the upcoming results. In Section 3 we give the definition and main properties of (special) GLIT classes and (special) GLIT algebras. Finally, Section 4 is dedicated to giving conditions for Morita rings, matrix rings and tensor products to be GLIT.

%-------------------------------------------------------------------------------------
\section{Preliminaries}
%-------------------------------------------------------------------------------------

In this section some required definitions and results are presented in order to make use of them in what follows. Unless otherwise stated, we are going to work with left modules over an Artin algebra.\\

{\sc Igusa-Todorov functions and algebras.} First we give the definition of the Igusa-Todorov function $\Phi$. We present an alternative way of defining the $\Phi$ function that is equivalent to the original definition given in \cite{IT}. For an Artin algebra $\Lambda$, we denote by $\proj\,\Lambda$ the full subcategory of $\modu\,\Lambda$ consisting of the projective modules. Define $K_{\proj\,\Lambda}$ to be the free abelian group generated by the set $\{[X]\}$ of isoclasses of indecomposable, non-projective f.g. $\Lambda$-modules. For every $X\in \modu\,\Lambda$, we can define the subgroup $\left\langle X\right\rangle $ as the free abelian group generated by the set of all isoclasses of indecomposable non-projective modules which are direct summands of $X$. We denote by $L$ the endomorphism of $K_{\proj\,\Lambda}$ defined by $L([X])=[\Omega(X)]$, where $\Omega$ denotes the syzygy operator. In this setting, the $\Phi$ function can be defined as follows.

\begin{defi}\label{defi:Phi}
Let $\Lambda$ be an Artin algebra. We define a function $\Phi:\modu\,\Lambda\rightarrow \mathbb{Z}_{\geq 0}$ in the following way:
$$\Phi(X):=\min\{n\in\mathbb{Z}_{\geq 0} : rk L^k(\left\langle X\right\rangle)=rk L^{k+1}(\left\langle X\right\rangle), \forall k\geq n\}.$$
\end{defi}

We recall the definition of the second Igusa-Todorov function $\Psi$. For any subcategoty $\mathcal{C}$ of $\modu\,\Lambda$ we set $$\findim(\mathcal{C}):=\sup\{ \pd(X)\ | \ X\in\mathcal{C}, \pd(X)<\infty \}.$$

\begin{defi}\label{defi:Psi}
	Let $\Lambda$ be an Artin algebra. We define a function $\Psi:\modu\,\Lambda\rightarrow \mathbb{Z}_{\geq 0}$ in the following way:
	$$\Psi(X) := \Phi(X) + \findim(\add\,(\Omega^{\Phi(X)}(X)).$$
\end{defi}

The following is a summary of properties of the Igusa-Todorov functions and their generalised versions. The reader can see \cite{IT}, \cite{HLM1}, \cite{HL} and \cite{BLMV} for a full picture.

\begin{pro}\cite[Lemma 3.4, Proposition 3.5]{HLM1}\label{pro:huard}
	Let $\Lambda$ be an Artin algebra. Then,  for every $X\in \modu\,\Lambda,$ we have:
	\begin{itemize}
		\item[(a)] $\Phi(X)\leq \Phi\left( \Omega (X)\right) +1;$
		\item[(b)] $\Psi(X)\leq \Psi\left( \Omega (X)\right) +1.$
	\end{itemize}
\end{pro}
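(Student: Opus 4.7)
The plan is to reduce (a) to a rank argument in $K_{\proj\,\Lambda}$, and then bootstrap (b) from (a) via a monotonicity property of $\findim\circ\add$ along syzygies. For (a), the key inclusion I would establish first is $L(\langle X\rangle)\subseteq \langle \Omega(X)\rangle$: for each indecomposable non-projective direct summand $X_i$ of $X$, the element $L([X_i])=[\Omega(X_i)]$ is a $\mathbb{Z}$-combination of the classes of the indecomposable non-projective summands of $\Omega(X_i)$, each of which is itself a summand of $\Omega(X)$. Setting $n=\Phi(\Omega(X))$, the defining property of $\Phi$ makes the surjective homomorphism $L\colon L^k(\langle \Omega(X)\rangle)\to L^{k+1}(\langle \Omega(X)\rangle)$ rank-preserving for every $k\geq n$. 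Since $K_{\proj\,\Lambda}$ is free abelian and every subgroup is therefore torsion-free, a rank-zero kernel must vanish, making $L$ injective on $L^k(\langle \Omega(X)\rangle)$. The nested inclusion
$$L^{k+1}(\langle X\rangle)=L^k(L(\langle X\rangle))\subseteq L^k(\langle \Omega(X)\rangle)$$
transfers injectivity to $L^{k+1}(\langle X\rangle)$, so $L\colon L^{k+1}(\langle X\rangle)\to L^{k+2}(\langle X\rangle)$ is an isomorphism for $k\geq n$. Hence $\rk L^m(\langle X\rangle)=\rk L^{m+1}(\langle X\rangle)$ for every $m\geq n+1$, giving $\Phi(X)\leq \Phi(\Omega(X))+1$.

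For (b), I would first establish the auxiliary monotonicity
$$\findim(\add\,\Omega^k(X))\leq \findim(\add\,\Omega^{k+1}(X))+1,\qquad k\geq 0,$$
by noting that any $M\in\add\,\Omega^k(X)$ of finite positive projective dimension $p$ is a direct summand of $(\Omega^k X)^r$ for some $r$, so $\Omega M$ is a summand of $(\Omega^{k+1}X)^r$ and has $\pd(\Omega M)=p-1$. Iterating this yields $\findim(\add\,\Omega^k(X))\leq \findim(\add\,\Omega^{k+j}(X))+j$ for all $j\geq 0$. Applying it with $j=\Phi(\Omega(X))+1-\Phi(X)\geq 0$ (from (a)) and using the identity $\Omega^{\Phi(\Omega(X))+1}(X)=\Omega^{\Phi(\Omega(X))}(\Omega(X))$ (up to projective summands, which do not affect $\findim\circ\add$), one obtains
$$\Psi(X)\leq \Phi(\Omega(X))+1+\findim(\add\,\Omega^{\Phi(\Omega(X))}(\Omega(X)))=\Psi(\Omega(X))+1.$$

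The main obstacle is the rank step in (a): upgrading the defining equality $\rk L^k=\rk L^{k+1}$ to genuine injectivity of $L$ on $L^k(\langle \Omega(X)\rangle)$ is what lets the inclusion $L(\langle X\rangle)\subseteq \langle \Omega(X)\rangle$ be productive. Everything else in both parts is routine bookkeeping with projective dimensions and syzygies.
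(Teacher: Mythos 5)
The paper does not prove this proposition; it cites it directly from \cite[Lemma 3.4, Proposition 3.5]{HLM1}, so there is no in-paper argument to compare against. Judged on its own terms, your proof is correct. The inclusion $L(\langle X\rangle)\subseteq\langle\Omega(X)\rangle$, the torsion-freeness argument upgrading rank equality to injectivity of $L$ on $L^k(\langle\Omega(X)\rangle)$ for $k\geq\Phi(\Omega(X))$, and the transfer of this injectivity through the nested inclusion to deduce stabilisation of $\rk L^m(\langle X\rangle)$ for $m\geq\Phi(\Omega(X))+1$ together give a clean and complete proof of (a), and it is essentially the same mechanism used in the Igusa--Todorov and Huard--Lanzilotta--Mendoza treatments. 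For (b), the monotonicity $\findim(\add\,\Omega^k(X))\leq\findim(\add\,\Omega^{k+1}(X))+1$ is correctly justified (a summand $M$ of $(\Omega^kX)^r$ of finite positive projective dimension has $\Omega M$ as a summand of $(\Omega^{k+1}X)^r$ with projective dimension dropped by one; the case $\pd M=0$ is trivial), and combining it with $j=\Phi(\Omega X)+1-\Phi(X)\geq0$, which is nonnegative precisely because of (a), yields the desired bound. You might note, as a small polish, that $\Omega^{\Phi(\Omega X)+1}(X)=\Omega^{\Phi(\Omega X)}(\Omega X)$ is an exact equality of modules rather than merely an equality up to projective summands, so the caveat about projectives is unnecessary there, though harmless.
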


\begin{defi}\label{defi:add_closed_and_sy_inv}
	Let $\Lambda$ be an Artin algebra, $\D$ a subcategory of $\modu\,\Lambda$ and $t\geq 1$ an integer. 
	\begin{itemize}
		\item[(1)] We say that $\D$ is \textbf{additively closed} if $\add\,\D=\D$.
		\item[(2)] We say that $\D$ is \textbf{$t$-syzygy invariant} if $\Omega^t\D\subseteq \D$.
	\end{itemize}
\end{defi}

In \cite{BLMV} the authors defined generalised Igusa-Todorov functions associated to a subcategory $\D$ of $\modu\,\Lambda$ that is additively closed and $1$-syzygy invariant. Said functions are denoted $\Phi_{[\D]}$ and $\Psi_{[\D]}$. It is pertinent to observe that if $\D$ is taken to be the subcategory consisting of projective modules, then the functions obtained are equal to the original Igusa-Todorov functions defined in \cite{IT}. We list now a few of the main properties of $\Phi_{[\D]}$ and $\Psi_{[\D]}$.

\begin{pro}\label{pro:properties_gen_IT_fun}
	Let $\Lambda$ be an Artin algebra and $\D$ a subcategory of $\modu\,\Lambda$ that is additively closed and syzygy invariant. Then, for $X,Y\in\modu\,\Lambda$ and $D\in\D$ we have
	\begin{itemize}
	    \item[(a)] $\Phi_{[\D]}(X\oplus D) = \Phi_{[\D]}(X).$ 
		\item[(b)] $\Phi_{[\D]}(X^s) = \Phi_{[\D]}(X).$
		\item[(c)] $\Phi_{[\D]}(X)\leq \Phi_{[\D]}(X\oplus Y).$
	\end{itemize}
\end{pro}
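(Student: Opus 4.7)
The plan is to unpack the definition of $\Phi_{[\D]}$ from \cite{BLMV}, which parallels Definition~\ref{defi:Phi}: one replaces the free abelian group on isoclasses of indecomposable non-projective modules by the free abelian group $K_{[\D]}$ on isoclasses of indecomposable modules \emph{not in} $\D$, while $L$ is still induced by $\Omega$. Additive closure of $\D$ ensures that the subgroup $\langle M\rangle\subseteq K_{[\D]}$ generated by those indecomposable summands of a module $M$ which do not lie in $\D$ is unambiguously defined, and syzygy invariance of $\D$ makes $L$ well-defined on $K_{[\D]}$. Each assertion will then reduce to a statement about these subgroups and their iterates under $L$.

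For (a) I would argue that, because $\D$ is additively closed, every indecomposable summand of $D$ lies in $\D$ and thus contributes no new generator to $\langle X\oplus D\rangle$. Hence $\langle X\oplus D\rangle=\langle X\rangle$, and the equality $\Phi_{[\D]}(X\oplus D)=\Phi_{[\D]}(X)$ follows immediately since the definition depends only on this subgroup and on the chain $L^k\langle -\rangle$. Part (b) is similar: the isoclasses of indecomposable summands of $X^s$ coincide with those of $X$ (the multiplicities differ but do not affect the generated subgroup), so $\langle X^s\rangle=\langle X\rangle$ and we are done.

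Part (c) is the only step with genuine content. I would set $n:=\Phi_{[\D]}(X\oplus Y)$ and first observe the inclusion $\langle X\rangle\subseteq\langle X\oplus Y\rangle$, which propagates to $L^k\langle X\rangle\subseteq L^k\langle X\oplus Y\rangle$ for every $k\geq 0$. The key rank-stability step is then: for $k\geq n$ the surjection
$$L\colon L^k\langle X\oplus Y\rangle \twoheadrightarrow L^{k+1}\langle X\oplus Y\rangle$$
is a surjective homomorphism of finitely generated free abelian groups of equal rank, hence an isomorphism. Restricting this now-injective $L$ to the subgroup $L^k\langle X\rangle$ yields $\rk L^k\langle X\rangle = \rk L^{k+1}\langle X\rangle$ for all $k\geq n$, whence $\Phi_{[\D]}(X)\leq n=\Phi_{[\D]}(X\oplus Y)$.

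The main obstacle will be the rank-matching argument in (c): one has to justify that a surjection between finitely generated free abelian groups of equal rank has trivial kernel, and that injectivity then restricts to any subgroup. Both facts are elementary, but it is worth spelling out that all the groups in sight are subgroups of the finitely generated free abelian group $\langle X\oplus Y\rangle$, so their ranks are finite and the comparison is legitimate. Parts (a) and (b), by contrast, are pure bookkeeping on indecomposable summands and will be immediate once the definition of $\langle -\rangle$ is in place.
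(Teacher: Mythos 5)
Your proof is correct. The paper itself does not prove this proposition (it is imported from \cite{BLMV} along with the definitions), but your reconstruction of $\Phi_{[\D]}$ --- pass to the free abelian group on isoclasses of indecomposables outside $\D$, let $L$ descend (syzygy invariance of $\D$ makes it well-defined), and take $\langle M\rangle$ to be the subgroup generated by the indecomposable summands of $M$ outside $\D$ --- matches \cite{BLMV}, and with it parts (a) and (b) are indeed the immediate bookkeeping you describe, using additive closure so that no summand of $D$ survives in $K_{[\D]}$. Your argument for (c) is the standard Igusa--Todorov one: $L^k\langle X\rangle\subseteq L^k\langle X\oplus Y\rangle$, and once $L$ becomes injective on $L^k\langle X\oplus Y\rangle$ (a surjection between finitely generated free abelian groups of equal finite rank is an isomorphism), its restriction to the subgroup $L^k\langle X\rangle$ is injective as well, so the rank of $L^k\langle X\rangle$ stabilises from $k=\Phi_{[\D]}(X\oplus Y)$ onward. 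This is exactly the mechanism used in \cite{IT} and \cite{BLMV}, so the proposal is correct and takes the expected route.
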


An important result that we will use later on is the following inequality.

\begin{teo}\cite[Theorem 3.5]{BLMV}\label{teo:Phi_D_inequality}
	Let $\Lambda$ be an Artin algebra and $\D$ a subcategory of $\modu\,\Lambda$ that is additively closed and syzygy invariant. Then for every $X\in\modu\,\Lambda$,
	$$\Phi(X) \leq \Phi_{[\D]}(X) + \Phidim(\D).$$
\end{teo}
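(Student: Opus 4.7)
My plan is to combine iterated syzygy descent with the stabilisation property built into $\Phi_{[\D]}$. Setting $n := \Phi_{[\D]}(X)$ and iterating Proposition~\ref{pro:huard}(a) yields
$$\Phi(X)\;\leq\;\Phi(\Omega^n(X))+n\;=\;\Phi(\Omega^n(X))+\Phi_{[\D]}(X),$$
so the whole statement reduces to proving $\Phi(\Omega^n(X))\leq\Phidim(\D)$.

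To attack this reduced inequality, I would exploit that, since $\D$ is additively closed and syzygy invariant, the subgroup $K_{\D}\subseteq K_{\proj\,\Lambda}$ generated by the indecomposable non-projective modules of $\D$ is $L$-stable. Thus $L$ descends to an operator $\bar L$ on the quotient $K_{\proj\,\Lambda}/K_{\D}$, and by construction $\Phi_{[\D]}(X)$ is precisely the stabilisation index of $\bar L^{k}\,\overline{\langle X\rangle}$. Combined with the decomposition
$$\rk L^{k}\langle X\rangle\;=\;\rk\bar L^{k}\,\overline{\langle X\rangle}\;+\;\rk\bigl(L^{k}\langle X\rangle\cap K_{\D}\bigr),$$
this shows that for $k\geq n$ the only possible instability of $\rk L^{k}\langle X\rangle$ comes from the intersection with $K_{\D}$.

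To bound the number of extra steps needed for that intersection to stabilise, I would decompose $\Omega^{k}(X)\cong Y_{k}\oplus D_{k}$ up to projective summands, with $D_{k}\in\add\,\D$ and $Y_{k}$ having no $\D$-summand. Then $L^{k}\langle X\rangle\cap K_{\D}=\langle D_{k}\rangle$, and one invokes two facts in tandem: each $D_{k}$ lies in $\D$, hence $\Phi(D_{k})\leq\Phidim(\D)$; and for $k\geq n$ the non-$\D$ part $Y_{k}$ is already stable, so the new $\D$-summands produced by its further syzygies repeat in a controlled way. Together these force $\rk\langle D_{k}\rangle$ to stabilise within $\Phidim(\D)$ additional steps, giving $\Phi(\Omega^{n}(X))\leq\Phidim(\D)$ and hence the theorem.

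I expect the main obstacle to be the last step: reconciling the two contributions to $\langle D_{k}\rangle$. On one hand $\langle D_{k}\rangle$ is fed by iterated syzygies of $D_{n}\in\D$, which stabilise within $\Phidim(\D)$ steps; on the other hand, further syzygies of $Y_{k}$ may keep producing new $\D$-summands. Making precise that the already-stable non-$\D$ behaviour beyond step $n$ forces the contribution to $K_{\D}$ to settle is the technical heart of the argument, and is where the careful definition of $\Phi_{[\D]}$ from~\cite{BLMV} does the real work.
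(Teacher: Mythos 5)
The opening reduction is where the argument breaks. From Proposition~\ref{pro:huard}(a) you correctly get $\Phi(X)\leq\Phi(\Omega^n X)+n$ with $n=\Phi_{[\D]}(X)$, but the claim that ``the whole statement reduces to proving $\Phi(\Omega^n X)\leq\Phidim(\D)$'' asks for something strictly stronger than the theorem, and this stronger inequality is false in general. To see this, take $\D=\proj\,\Lambda$, so that $\Phi_{[\D]}=\Phi$ and $\Phidim(\D)=0$; your reduced claim then reads $\Phi(\Omega^{\Phi(X)}X)=0$ for every $X$. But whenever some summand $Z$ of $\Omega^{\Phi(X)}X$ has $0<\pd Z<\infty$ one has $\Phi(\Omega^{\Phi(X)}X)\geq\Phi(Z)=\pd Z>0$; equivalently, $\Phi(\Omega^{\Phi(X)}X)>0$ precisely when $\Psi(X)>\Phi(X)$, and $\Psi\neq\Phi$ for plenty of algebras --- indeed the whole point of Definition~\ref{defi:Psi} is to record this residual contribution. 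More generally, $\Phi(\Omega^{\Phi(X)}X)>0$ also occurs whenever $\Omega^{\Phi(X)}X$ decomposes with a later syzygy collapse among its summands. The source of the error is that $\Phi(\Omega^n X)$ is computed from $\langle\Omega^n X\rangle$, which is a \emph{strictly larger} free group than $L^n\langle X\rangle$ (it separates indecomposable summands that $L^n\langle X\rangle$ sees only in bundled form), so its rank can keep dropping even after $L^k\langle X\rangle$ and $\bar L^k\overline{\langle X\rangle}$ have stabilised.

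The middle of your sketch also contains an incorrect identity: $L^k\langle X\rangle\cap K_{\D}$ is not $\langle D_k\rangle$. On the one hand, $\langle D_k\rangle$ need not sit inside $L^k\langle X\rangle$ (the generators of the latter are the full classes $[\Omega^k X_i]$, not their individual $\D$-summands); on the other hand, the intersection typically contains elements such as $[\Omega^k X_1]-[\Omega^k X_2]$ when two syzygies share their non-$\D$ part, and these are not accounted for by $\langle D_k\rangle$. Finally, you yourself flag the last step as unresolved, so even granting the earlier steps the argument is incomplete. The viable route stays with $L^k\langle X\rangle$ throughout: use the exact sequence $0\to L^k\langle X\rangle\cap K_{\D}\to L^k\langle X\rangle\to\bar L^k\overline{\langle X\rangle}\to 0$, note that for $k\geq n$ the quotient rank is constant so $L$ carries $L^k\langle X\rangle\cap K_{\D}$ \emph{onto} $L^{k+1}\langle X\rangle\cap K_{\D}$, observe that $L^n\langle X\rangle\cap K_{\D}\subseteq\langle D\rangle$ for a single $D\in\D$ (using $\add\,\D=\D$), and then invoke $\Phi(D)\leq\Phidim(\D)$ to deduce that $L$ is injective on $L^{\Phidim(\D)}\langle D\rangle$, hence on $L^{n+\Phidim(\D)}\langle X\rangle\cap K_{\D}$; the total rank $\rk L^k\langle X\rangle$ then stabilises at $k=n+\Phidim(\D)$, which is the theorem.
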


Next we recall the definition of an Igusa-Todorov algebra given by J. Wei in \cite{W}.

\begin{defi}\label{defi:IT_algebra}
	Let $\Lambda$ be an Artin algebra. We say $\Lambda$ is $n$-Igusa-Todorov for some $n\geq 0$ if there is a module $V$ such that for every $M\in\modu\,\Lambda$ there is a short exact sequence $$0\rightarrow V_1\rightarrow V_0\rightarrow \Omega^nM\rightarrow 0,$$ with $V_0, V_1\in\add\,V.$ If we want to specify we say $\Lambda$ is $(n,V)$-Igusa-Todorov or $(n,V)$-IT for short.
\end{defi}

The relevance of IT algebras lies within the fact that they satisfy the $\findim$ conjecture and they include monomial algebras, special biserial algebras and radical cube algebras, among other relevant examples. However, all Artin algebras are not IT algebras, as was shown by T. Conde in her PhD thesis \cite{TC}. Using a previous result by R. Rouquier \cite[Corollary 4.4]{R} she was able to show that if $\K$ is an uncountable field and $V$ is a $\K$-vector space of dimension greater or equal than 3, then the exterior algebra $\bigwedge(V)$ is not an IT algebra.\\
On the other hand, exterior algebras are self-injective, so despite not being IT, their finitistic dimension is zero. In an attempt to generalise Wei's definition the authors in \cite{BLMV} defined new IT functions that gave way to the concept of LIT algebra, which generalises the definition of IT algebra. LIT algebras also satisfy the $\findim$ conjecture.

\begin{defi} \cite[Definition 5.1]{BLMV} An $n$-Lat-Igusa-Todorov algebra ($n$-LIT algebra, for short), where $n$ is a non-negative integer, is an Artin algebra $\Lambda$ satisfying the following two conditions:
	\begin{itemize}
		\item[(a)]  there is a class $\D\subseteq\modu\,\Lambda$ that is additively closed, $1$-syzygy invariant and $\Phidim\,(\D)=0;$
		\item[(b)] there exists $V\in\modu\,\Lambda$ satisfying that each $M\in \modu\,\Lambda$ admits an exact sequence 
		$$0\longrightarrow X_1\longrightarrow X_0\longrightarrow \Omega^nM\longrightarrow 0,$$ such that $X_1=V_1\oplus D_1$, $X_0=V_0\oplus D_0$, with $V_1,V_0\in \add\,V$ and $D_1,D_0\in \D.$
	\end{itemize}
	In case we need to specify the class $\D$ and the $\Lambda$-module $V,$ in the above definition, we say that $\Lambda$ is a $(n,V,\D)$-LIT algebra.
\end{defi} 

As we have said, LIT algebras include all the examples of non IT algebras that were known at the moment. This class of algebras also includes examples of non self-injective, non IT algebras. However, not all Artin algebras are LIT either, as was shown by M. Barrios and G. Mata in \cite{Barrios2}.\\

The main objective of this paper is to further generalise the concept of LIT algebra in order to create a new class of algebras so that all of the examples above are included and the $\findim$ conjecture continues to hold.\\

{\sc Morita contexts and rings.} Let $T$ and $U$ be Artin algebras, a Morita context over $T,U$ is a $6$-tuple $\mathcal{M}=(T,N,M,U,\alpha,\beta)$, where $_{U}M_T$ is a $U$-$T$-bimodule, $_{T}N_U$ is a $T$-$U$-bimodule and the maps $\alpha: M\otimes_T N\rightarrow U$, $\beta: N\otimes_U M\rightarrow T$ are $U$-$U$ and $T$-$T$-bimodule homomorphisms respectively, satisfying the following associativity conditions, $\forall m,m'\in M, \forall n,n'\in N$:
$$\alpha(m\otimes n)m' = m\beta(n\otimes m')\ \ \text{and}\ \ \beta(n\otimes m)n' = n\alpha(m\otimes n').$$
Associated to any Morita context $\mathcal{M}$ as above, there is the Morita ring of $\mathcal{M}$, which incorporates all the information involved in the 6-tuple and is defined to be the formal $2\times 2$ matrix ring
$$\Lambda_{(\alpha,\beta)}(\mathcal{M})=\begin{pmatrix}
	T & N \\ M & U
\end{pmatrix},$$
where matrix multiplication is given by 
$$\begin{pmatrix}
	t & n \\ m & u
\end{pmatrix} \cdot \begin{pmatrix}
	t' & n' \\ m' & u'
\end{pmatrix} = \begin{pmatrix}
	tt' + \beta(n\otimes m') & tn' + nu' \\ mt' + um' & \alpha(m\otimes n') + uu'
\end{pmatrix}$$

For simplicity we will write $\Lambda_{(\alpha,\beta)}$ instead of $\Lambda_{(\alpha,\beta)}(\mathcal{M})$. We say that a Morita ring is a Morita algebra if $\Lambda_{(\alpha,\beta)})$ is an Artin algebra.\\

We recall a very useful description for the module category of a Morita algebra. Any left $\Lambda_{(\alpha,\beta)}$-module can be seen as a 4-tuple $(A,B,f,g)$, where $A$ is a left $T$-module, $B$ is a left $U$-module, $f:M\otimes_T A\rightarrow B$ is a $U$-morphism and $g:N\otimes_U B\rightarrow A$ is a $T$-morphism. In addition, the following diagrams have to be commutative:

$$\xymatrix{N\otimes_U M\otimes_T A
	\ar[r]^(0.56){1_N\otimes f} 
	\ar[d]_{\beta\otimes 1_A} & 
	N\otimes_U B\ar[d]^{g}
	\\
	T\otimes_T A\ar[r]^{m_A} & A
} \ \ \ \ \ \
\xymatrix{M\otimes_T N\otimes_U B
	\ar[r]^(0.56){1_M\otimes g} 
	\ar[d]_{\alpha\otimes 1_B} & 
	M\otimes_T A\ar[d]^{f}
	\\
	U\otimes_U B\ar[r]^{m_B} & B,
}$$

where $m_A$ and $m_B$ are the isomorphisms given by the module action. We denote by $\beta_A:=m_A\circ (\beta\otimes 1_A)$ and $\alpha_B:=m_B\circ (\alpha\otimes 1_B)$.

A morphism $h:(A_1,B_1,f_1,g_1)\rightarrow (A_2,B_2,f_2,g_2)$ is a pair of morphisms $(h_1,h_2)$ such that $h_1\in Hom_T(A_1,A_2)$, $h_2\in Hom_U(B_1,B_2)$ and the following diagrams commute:
$$\xymatrix{
M\otimes_T A_1\ar[r]^{1\otimes h_1}\ar[d]^{f_1} & M\otimes_T A_2\ar[d]^{f_2} \\
B_1\ar[r]^{h_2} & B_2
}
\ \ \ \ \ \
\xymatrix{
	N\otimes_U B_1\ar[r]^{1\otimes h_2}\ar[d]^{g_1} & N\otimes_U B_2\ar[d]^{g_2}
	\\
	A_1\ar[r]^{h_1} & A_2 
}
$$

A 4-tuple $(A,B,f,g)$ is an indecomposable projective $\Lambda_{(\alpha,\beta)}$-module if and only if it has either the form $(P,M\otimes_T P, 1_{M\otimes_T P},\beta_P)$ or $(N\otimes_U Q,Q,\alpha_Q,1_{N\otimes_U Q})$, where $P$ and $Q$ are indecomposable projective modules over $T$ and $U$ respectively.\\
For further properties and more details we recommend \cite{Psa}.\\

The following result from \cite{IT-Morita} describes more precisely the syzygies of modules in a Morita algebra given some additional conditions on the Morita context. 

\begin{lem}\label{Syzygy_triang}
Let $\Lambda_{(0,0)}=\begin{pmatrix} T & N \\ M & U\end{pmatrix}$ be a Morita algebra sucht that $_{U}M, M_T, _{T}N$ and $N_U$ are projective modules. Then, for each $(A,B,f,g)\in\modu\,\Lambda$ we have $$\Omega_{\Lambda}(A,B,f,g)=(\Omega_TA, M\otimes_T P_A^{0}, 1\otimes i_0,0) \oplus (N\otimes_UQ_B^0,\Omega_UB, 0,1\otimes j_0),$$ where $P_A^{0}$ and $Q_B^0$ are the projective covers of $A$ and $B$ respectively. The maps $i_{0}:\Omega_TA\rightarrow P_A^{0}$ and $j_{0}:\Omega_UB\rightarrow Q_B^{0}$ are the canonical inclusions.
\end{lem}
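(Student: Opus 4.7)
The plan is to build an explicit projective $\Lambda$-module surjecting onto $(A,B,f,g)$ and then to identify the kernel with the claimed direct sum. Using the hypothesis that $_U M, M_T, {}_T N, N_U$ are all projective, the bimodule tensor products $M \otimes_T P_A^0$ and $N \otimes_U Q_B^0$ lie in $\proj\,U$ and $\proj\,T$ respectively, so by the description of projective $\Lambda$-modules recalled earlier the four-tuples
$$P_1 = (P_A^0, M \otimes_T P_A^0, 1, 0) \quad \text{and} \quad P_2 = (N \otimes_U Q_B^0, Q_B^0, 0, 1)$$
are projective $\Lambda$-modules. I would then define a morphism $\pi \colon P_1 \oplus P_2 \to (A, B, f, g)$ whose components are
$$\pi_T(p, n \otimes q) = p_A(p) + g(n \otimes p_B(q)), \qquad \pi_U(m \otimes p, q) = f(m \otimes p_A(p)) + p_B(q),$$
and check that it is a $\Lambda$-morphism using the identities $f \circ (1_M \otimes g) = \alpha_B = 0$ and $g \circ (1_N \otimes f) = \beta_A = 0$, which follow from $\alpha = \beta = 0$. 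Surjectivity is then immediate from surjectivity of the projective covers $p_A$ and $p_B$.

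Next I would identify $\ker \pi$ with the claimed module on the right-hand side. Since $N \otimes_U Q_B^0$ is a projective $T$-module and $p_A$ is surjective, the composition $g \circ (1 \otimes p_B) \colon N \otimes_U Q_B^0 \to A$ can be lifted through $p_A$ to a morphism $\ell \colon N \otimes_U Q_B^0 \to P_A^0$; symmetrically, $f \circ (1 \otimes p_A)$ can be lifted through $p_B$ to $\ell' \colon M \otimes_T P_A^0 \to Q_B^0$. The shear isomorphisms
$$(\omega, x) \mapsto (\omega - \ell(x), x), \qquad (y, \omega') \mapsto (y, \omega' - \ell'(y))$$
identify $\Omega_T A \oplus N \otimes_U Q_B^0$ with $\ker \pi_T$ and $M \otimes_T P_A^0 \oplus \Omega_U B$ with $\ker \pi_U$ at the level of $T$- and $U$-modules. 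Restricting the structure maps $f_P, g_P$ to the kernel and transporting them through these identifications should then recover the block-diagonal maps $\mathrm{diag}(1\otimes i_0, 0)$ and $\mathrm{diag}(0, 1 \otimes j_0)$ defining the direct sum in the statement.

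The step I expect to be the main obstacle is the last one: verifying that the underlying isomorphisms of $T$- and $U$-modules genuinely assemble into an isomorphism of $\Lambda$-modules, not merely into a pair of isomorphisms in $\modu\,T$ and $\modu\,U$. The cross-terms produced by $\ell$ and $\ell'$ must cancel when tested against the structure maps of the right-hand side; this relies on $\alpha = \beta = 0$, on the resulting block-diagonal form of $f_P$ and $g_P$ on $P_1 \oplus P_2$, and on the defining equations $p_A \circ \ell = g \circ (1 \otimes p_B)$ and $p_B \circ \ell' = f \circ (1 \otimes p_A)$. Once each entry of the two compatibility squares is carefully unwound using these identities, the identification with $\Omega_{\Lambda}(A, B, f, g)$ falls out.
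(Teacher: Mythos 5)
Your construction of the surjection $\pi\colon P_1\oplus P_2\to (A,B,f,g)$ and the check that it is a $\Lambda$-morphism (using $f\circ(1_M\otimes g)=\alpha_B=0$ and $g\circ(1_N\otimes f)=\beta_A=0$) are correct, and so are the $T$- and $U$-linear shear identifications $\ker\pi_T\cong\Omega_T A\oplus N\otimes_U Q_B^0$ and $\ker\pi_U\cong M\otimes_T P_A^0\oplus\Omega_U B$. The gap is exactly where you anticipate it, but it is not a matter of ``careful unwinding'': the cross-terms do \emph{not} cancel. Concretely, $f_P$ sends $m\otimes i_0(\omega)$ (with $\omega\in\Omega_T A$) to $(m\otimes i_0(\omega),0)\in\ker\pi_U$, and under your $U$-side shear this element corresponds to $\bigl(m\otimes i_0(\omega),\, j_0^{-1}\ell'(m\otimes i_0(\omega))\bigr)$, whose $\Omega_U B$-component is generally nonzero; similarly $f_P$ produces terms involving $1\otimes\ell$ from the $N\otimes_U Q_B^0$ slot, and $g_P$ produces symmetric cross-terms. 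So in your coordinates the transported structure maps $\tilde f,\tilde g$ are genuinely full $2\times 2$ matrices rather than block-diagonal, and in particular the subspace $(\Omega_T A,\,M\otimes_T P_A^0)$ is not a $\Lambda$-submodule of $\ker\pi$ under the shear identification. What remains to be done is to exhibit an explicit $\Lambda$-module isomorphism $(h_1,h_2)$ carrying $(\ker\pi_T,\ker\pi_U,f_P|,g_P|)$ to the claimed block-diagonal direct sum; that isomorphism is the actual content of the lemma and your sketch does not produce it.

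A second, smaller issue: $\pi$ is not a minimal projective cover in general. Take $T=U=\K$, $M=N=\K$, $\alpha=\beta=0$ and $(A,B,f,g)=(\K,\K,1,0)=P_1$; then your map $\pi\colon P_1\oplus P_2\to P_1$ has kernel isomorphic to $P_2$, whereas $\Omega_\Lambda P_1=0$. Thus $\ker\pi$ agrees with $\Omega_\Lambda(A,B,f,g)$ only up to projective direct summands. That discrepancy is harmless for the applications in this paper (the Igusa--Todorov functions are computed in $K_{\proj\Lambda}$, where projectives vanish), but your write-up should either say so explicitly or add an argument that $\pi$ is minimal under whatever hypotheses you want the statement to hold as an honest equality of modules.
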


We finish this section by making a very elementary but useful remark.

\begin{remark}\label{rmk:Syzygy_triang}
If, in addition to the hypotheses of Lemma \ref{Syzygy_triang}, we impose the condition $M\otimes_TN = N\otimes_UM=0$, then:
\begin{itemize}
	\item[(1)]$\Omega^n(A,B,f,g) = (\Omega^n(A),M\otimes P_{n-1}^A,1\otimes i_n,0) \oplus (N\otimes_UP_{n-1}^B, \Omega^n(B),0,1\otimes i_n)$, for any $n\geq 1$.
	\item[(2)] $\Omega(A, M\otimes_TP,f,g) = \Omega(A,0,0,0)$, for any $A\in\modu\,T$ and $P\in\proj\,T.$
	\item[(3)] $\Omega(N\otimes_UQ,B,f',g') = \Omega(0,B,0,0)$, for any $B\in\modu\,U$ and $Q\in\proj\,U.$
	\item[(4)] $\Omega^n(A,0,0,0) = \Omega(\Omega^{n-1}A,0,0,0),$ for all $n\geq 1.$
	\item[(5)] $\Omega^n(0,B,0,0) = \Omega(0,\Omega^{n-1}B,0,0),$ for all $n\geq 1.$
\end{itemize}

\end{remark}

%-------------------------------------------------------------------------------------
\section{(special) GLIT classes and (special) GLIT algebras}
%-------------------------------------------------------------------------------------

In this section we give the definition and main properties of (special) GLIT classes and (special) GLIT algebras. We start by looking at the question of  when a subcategory $\C$ of $\modu\,\Lambda$ with $\Phidim(\C)<\infty$ also satisfies that $\Psidim(\C)<\infty$.

\begin{pro}\label{pro:Psidim_finite}
	Let $\Lambda$ be an Artin algebra and $\C$ be a subcategory of $\modu\,\Lambda$ such that $\Phidim(\C)=n<\infty$. The following statements are satisfied. 
	\begin{itemize}
		\item[(i)] If $\findim(\add\,\Omega^t\C)<\infty \ \text{for some}\ t\geq n,$ then $\Psidim(\C)<\infty.$
		\item[(ii)] If $\C$ is closed under direct sums and $\Psidim(\C)<\infty$, then we have \\ $\findim(\add\,\Omega^n\C)<\infty.$
	\end{itemize}
\end{pro}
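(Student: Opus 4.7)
My plan is to handle the two directions separately, each leveraging the decomposition $\Psi(X) = \Phi(X) + \findim(\add\,\Omega^{\Phi(X)}(X))$ together with the elementary observation that if $Y$ is a direct summand of a module $M$ and $\pd(Y)<\infty$, then (up to projective summands, which do not affect $\findim$) $\Omega^k(Y)\in\add\,\Omega^k(M)$ with $\pd(\Omega^k(Y)) = \max(0,\pd(Y)-k)$. This is what will let me pass finitistic-dimension information back and forth between different syzygy levels.

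For (i), I would fix $X\in\C$ and write $k := \Phi(X)\leq n\leq t$. For any $Y\in\add\,\Omega^k(X)$ with $\pd(Y)<\infty$, the module $\Omega^{t-k}(Y)$ sits in $\add\,\Omega^t(X)\subseteq\add\,\Omega^t\C$ modulo projectives, so $\pd(\Omega^{t-k}(Y))\leq\findim(\add\,\Omega^t\C)$. A small case split on whether $\pd(Y)\geq t-k$ then yields $\pd(Y)\leq (t-k)+\findim(\add\,\Omega^t\C)$. Since this bound depends only on $t-\Phi(X)$, substituting into the formula for $\Psi$ produces the uniform bound $\Psi(X)\leq t+\findim(\add\,\Omega^t\C)$, which proves $\Psidim(\C)<\infty$.

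For (ii), set $m := \Psidim(\C)$. Because $\Phi$ takes values in $\mathbb{Z}_{\geq 0}$, the supremum $n=\Phidim(\C)$ is attained, so I would fix a witness $X^*\in\C$ with $\Phi(X^*)=n$. For an arbitrary $X\in\C$, direct-sum closure gives $X\oplus X^*\in\C$, and Proposition \ref{pro:properties_gen_IT_fun}(c) together with $\Phidim(\C)=n$ forces $\Phi(X\oplus X^*)=n$. The hypothesis $\Psi(X\oplus X^*)\leq m$ then delivers $\findim(\add\,\Omega^n(X\oplus X^*))\leq m-n$. Since $\Omega^n$ distributes over direct sums, $\Omega^n(X)$ is a direct summand of $\Omega^n(X\oplus X^*)$, so $\add\,\Omega^n(X)\subseteq\add\,\Omega^n(X\oplus X^*)$ and consequently $\findim(\add\,\Omega^n(X))\leq m-n$. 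Applying direct-sum closure of $\C$ once more, every module in $\add\,\Omega^n\C$ is a summand of $\Omega^n(X)$ for some $X\in\C$, and I conclude $\findim(\add\,\Omega^n\C)\leq m-n<\infty$.

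The hard part is (ii): the bound $\Psi(X)\leq m$ on its own only controls $\findim(\add\,\Omega^{\Phi(X)}(X))$ at a level $\Phi(X)$ that can be strictly smaller than $n$, and I see no natural way to descend from there to $\add\,\Omega^n(X)$ (there is no canonical lift of a finite-pd summand of $\Omega^n(X)$ back into $\add\,\Omega^{\Phi(X)}(X)$). The key device is therefore to inflate $X$ by the rank witness $X^*$ so as to drag $\Phi$ up to its maximum, which is precisely where direct-sum closure of $\C$ is indispensable.
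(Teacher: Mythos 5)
Your proof is correct and follows essentially the same strategy as the paper's: in (i) you push a finite-pd summand at level $\Phi(X)$ up to level $t$ via $\Omega^{t-\Phi(X)}$ and use $\pd(Y)\leq (t-\Phi(X))+\pd(\Omega^{t-\Phi(X)}Y)$, and in (ii) you inflate $X$ by a $\Phi$-witness $X^*$ (the paper calls it $C'$) to force $\Phi(X\oplus X^*)=n$ and read off the bound from $\Psi(X\oplus X^*)\leq m$. The only cosmetic difference is that the paper fixes a single $Z$ attaining the supremum in $\Psi$ while you bound an arbitrary finite-pd summand; both give the same uniform estimates.
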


\begin{proof}
	(i) Let $C\in\C$, by definition we have $\Psi(C)=\Phi(C)+\findim(\add\,\Omega^{\Phi(C)}(C))$. Then there is $Z\mid \Omega^{\Phi(C)}(C)$ with $\pd(Z)<\infty$ such that $\Psi(C)=\Phi(C)+\pd(Z).$ Since $\Phi(C)\leq n\leq t$, there is $k\geq 0$ such that $\Phi(C)+k=t$. Because the syzygy operator commutes with direct sums we get $\Omega^k(Z)\mid \Omega^t(C)$. On the other hand we know that $\pd(Z)\leq\pd(\Omega^k(Z))+k$. Combining all of the above we obtain the following $$\Psi(C)=\Phi(C)+\pd(Z)\leq \Phi(C) +\pd(\Omega^k(Z))+k\leq \findim(\add\,\Omega^t\C) + t.$$ Since this can be done for any $C\in\C$, we get $$\Psidim(\C)\leq \findim(\add\,\Omega^t\C) + t<\infty.$$
	
	(ii) Let us denote $\Psidim(\C)=m$ and let $Z\in\add\,\Omega^n(\C)$ with $\pd(Z)<\infty$. Then there is $Z'$ an indecomposable summand of $Z$ and $C\in\C$ such that $Z'\mid \Omega^n(C)$ and $\pd(Z)=\pd(Z').$ Because $\Phidim(\C)=n$, there is $C'$ with $\Phi(C')=n$, then $\Phi(C\oplus C')=n$ and $$\Psi(C\oplus C')=n+\findim(\add\,\Omega^n(C\oplus C')).$$ This means that $\pd(Z)\leq m-n$ and since this can be done for any $Z$, we get $$\findim(\add\,\Omega^n\C)\leq m-n.$$
\end{proof}

\begin{cor}\label{cor:Psidim_finite1}
	Let $\Lambda$ be an Artin algebra and $\C$ be a subcategory of $\modu\,\Lambda$ closed under direct sums and such that $\Phidim(\C)=n<\infty$. Then,
	$$\Psidim(\C)<\infty \Longleftrightarrow \findim(\add\,\Omega^t\C)<\infty, \ \text{for some}\ t\geq n.$$
\end{cor}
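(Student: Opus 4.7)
The plan is to deduce the corollary directly as a repackaging of the two implications in Proposition \ref{pro:Psidim_finite}, with no fresh computation needed.

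For the $(\Leftarrow)$ direction, I would simply invoke part (i) of Proposition \ref{pro:Psidim_finite}: the hypothesis that there exists some $t\geq n$ with $\findim(\add\,\Omega^t\C)<\infty$ is precisely the hypothesis of that part, and its conclusion is $\Psidim(\C)<\infty$. The assumption that $\C$ is closed under direct sums is not even required here.

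For the $(\Rightarrow)$ direction, I would invoke part (ii) of Proposition \ref{pro:Psidim_finite}: since $\C$ is closed under direct sums and $\Phidim(\C)=n<\infty$, the assumption $\Psidim(\C)<\infty$ yields $\findim(\add\,\Omega^n\C)<\infty$. Taking $t=n$ (which trivially satisfies $t\geq n$) shows that there exists a $t\geq n$ with $\findim(\add\,\Omega^t\C)<\infty$, which is exactly what we needed.

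There is essentially no obstacle: the only thing worth flagging is that part (ii) produces the stronger statement that we may take $t=n$ itself, so the corollary is in fact an equivalence between $\Psidim(\C)<\infty$ and the \emph{existence} of some $t\geq n$ on one side, and, implicitly, the choice $t=n$ works whenever any $t$ does. I would mention this explicitly to make the reader see why the two statements line up.
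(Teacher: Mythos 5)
Your proof is correct and matches the paper's (implicit) intent exactly: the corollary is stated without proof precisely because it is the conjunction of parts (i) and (ii) of Proposition \ref{pro:Psidim_finite}, applied just as you describe. Your remark that part (ii) yields the stronger conclusion at $t=n$ is also accurate and harmless.
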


\begin{cor}\label{cor:Psidim_finite2}
	Let $\Lambda$ be an Artin algebra and $\C$ be a subcategory of $\modu\,\Lambda$ such that $\Phidim(\C)=n<\infty$. $$\text{If}\ \Phidim(\Omega^t\C)<\infty \ \text{for some}\ t\geq n, \text{then} \ \Psidim(\C)<\infty.$$ Moreover, $\Psidim(\C)\leq \Phidim(\Omega^t\C)+t.$
\end{cor}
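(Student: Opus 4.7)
The plan is to adapt the argument of Proposition~\ref{pro:Psidim_finite}(i) almost verbatim, with the crucial modification that the projective-dimension bound on a summand of $\Omega^t(C)$ now comes from the $\Phi$-dimension of $\Omega^t\C$ instead of from its finitistic dimension. The bridge between the two is the standard identity $\Phi(X)=\pd(X)$ whenever $\pd(X)<\infty$, combined with Proposition~\ref{pro:properties_gen_IT_fun}(c) (applied with $\D=\proj\,\Lambda$, which recovers the original $\Phi$). Once we have these two ingredients, the bound $\pd(\Omega^k(Z))\le \Phidim(\Omega^t\C)$ replaces the $\findim(\add\,\Omega^t\C)$ bound used before.

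In detail, I fix $C\in \C$, and by the definition of $\Psi$ I produce an indecomposable summand $Z$ of $\Omega^{\Phi(C)}(C)$ with $\pd(Z)<\infty$ and $\Psi(C)=\Phi(C)+\pd(Z)$. Since $\Phi(C)\le n\le t$, I set $k:=t-\Phi(C)\ge 0$, so that $\Omega^k(Z)\mid \Omega^t(C)$ (the syzygy operator commuting with direct sums, exactly as in the existing proof). Because $\pd(Z)<\infty$, also $\pd(\Omega^k(Z))<\infty$, and hence $\Phi(\Omega^k(Z))=\pd(\Omega^k(Z))$. Writing $\Omega^t(C)=\Omega^k(Z)\oplus W$ and applying Proposition~\ref{pro:properties_gen_IT_fun}(c) yields
$$\pd(\Omega^k(Z))=\Phi(\Omega^k(Z))\le \Phi(\Omega^t(C))\le \Phidim(\Omega^t\C).$$
Combined with the elementary inequality $\pd(Z)\le \pd(\Omega^k(Z))+k$, this gives
$$\Psi(C)=\Phi(C)+\pd(Z)\le \Phi(C)+\Phidim(\Omega^t\C)+k=\Phidim(\Omega^t\C)+t.$$
Taking the supremum over $C\in\C$ produces the claimed bound $\Psidim(\C)\le \Phidim(\Omega^t\C)+t$, and in particular $\Psidim(\C)<\infty$.

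The only step that is not a direct transcription of the proof of Proposition~\ref{pro:Psidim_finite}(i) is the identity $\Phi(X)=\pd(X)$ when $\pd(X)<\infty$; this is a standard property of the first Igusa--Todorov function (the sequence $rk\,L^k\langle X\rangle$ is non-increasing, stabilises at $0$ exactly when $\Omega^k(X)$ becomes projective, which happens precisely at $k=\pd(X)$). Given this, and noting that no direct-sum-closure hypothesis on $\C$ is needed for the argument, I do not anticipate any further obstacle.
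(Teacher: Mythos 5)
Your proof is correct and takes essentially the same route the paper intends: the corollary is stated without proof precisely because it reduces to Proposition~\ref{pro:Psidim_finite}(i) once one observes, via the standard identity $\Phi(X)=\pd(X)$ for $\pd(X)<\infty$ together with Proposition~\ref{pro:properties_gen_IT_fun}(c), that $\findim(\add\,\Omega^t\C)\leq\Phidim(\Omega^t\C)$. Rather than invoking Proposition~\ref{pro:Psidim_finite}(i) as a black box, you re-run its proof with the $\Phidim$ bound substituted in directly --- a harmless redundancy, and the argument goes through exactly as you wrote it (including the degenerate case where no summand $Z$ of finite projective dimension exists, where the bound $\Psi(C)=\Phi(C)\leq t$ is trivial).
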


Let us show some examples of how we can use the previous results.

\begin{ex}\label{ej:CyD}
	\begin{itemize}
		\item[(1)] Let $\Lambda$ be an Artin algebra such that $\findim(\Lambda)<\infty$. Then every subcategory $\C$ of $\modu\,\Lambda$ with finite $\Phi$-dimension has also finite $\Psi$-dimension. This follows immediately from item (1) of Proposition \ref{pro:Psidim_finite}.
		\vspace{2mm}
		\item[(2)] Let $\Lambda$ be an Artin algebra and $\C$ a syzygy invariant subcategory of $\modu\,\Lambda$ such that $\Phidim(\C)=n<\infty$, then $\Psidim(\C)<\infty$. It suffices to apply Corollary \ref{cor:Psidim_finite2}, since $\Phidim(\Omega^n\C) \leq \Phidim(\C) < \infty$.
		\vspace{2mm}
		\item[(3)] Let $\D$ be a subcategory of $\modu\,\Lambda$ such that $\D$ is additively closed, syzygy invariant and $\Phidim(\D)=d<\infty$. Then for any $V\in\modu\,\Lambda$ the class $\C:=\D\oplus \add\,V$ satisfies $\Psidim(\C)<\infty$. In order to show this we will use Corollary \ref{cor:Psidim_finite2}.\\ 
		
		For an element $C=D_0\oplus V_0\in \D\oplus \add\,V$ we have 
		\begin{align*}
		\Phi(C) = \Phi(D_0\oplus V_0) & \leq \Phi_{[\D]}(D_0\oplus V_0)+d = \\ & =\Phi_{[\D]}(V_0)+d\leq \Phi_{[\D]}(V)+d.
		\end{align*}
		In order to obtain that bound we have used Theorem \ref{teo:Phi_D_inequality} and Proposition \ref{pro:properties_gen_IT_fun}. Hence, $\Phidim(\C)=n\leq \Phi_{[\D]}(V)+d<\infty.$\\
		
		Now, take $t\geq n$ and let $X\in\Omega^t(\C)$, then $X=\Omega^tD_1\oplus\Omega^tV_1$ and
		\begin{align*}
		\Phi(X)=\Phi(\Omega^tD_1\oplus\Omega^tV_1) & \leq \Phi_{[\D]}(\Omega^tD_1\oplus\Omega^tV_1)+d = \\ &=\Phi_{[\D]}(\Omega^tV_1)+d\leq \Phi_{[\D]}(\Omega^tV)+d.
		\end{align*}
		Here we have used the same inequalities as above, plus the fact that $\D$ is syzygy invariant. Therefore, $\Phidim(\Omega^t(\C))=k\leq \Phi_{[\D]}(\Omega^tV)+d<\infty.$\\
		In particular, taking $t=n$ we can apply Corollary \ref{cor:Psidim_finite2} to get $$\Psidim(\C)\leq \Phi_{[\D]}(V) + \Phi_{[\D]}(\Omega^nV) + 2d<\infty.$$
	\end{itemize}
\end{ex}

Now we examine the question of whether item $(3)$ of the previous example still holds if $\D$ is a class such that $\Phidim(\D)<\infty$, it is additively closed and $t$-syzygy invariant for some $t\geq 1$. The following results will lead to Lemma \ref{lem:previo_findim_finita}, where we prove that it is indeed possible.

\begin{lem}\label{lem:D_plus_indecomposable}
	Let $V\in\modu\,\Lambda$ be an indecomposable module and $\D\subseteq\modu\,\Lambda$ be a class such that $\add\,\D=\D$ and $\Phidim(\D)=n<\infty$. Then $\Phidim(\D\oplus\add\,V)<\infty.$
\end{lem}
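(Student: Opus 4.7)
The plan is to analyse the subgroups $L^k\langle X\rangle\subseteq K_{\proj\,\Lambda}$ for $X\in\D\oplus\add\,V$, exploiting the indecomposability of $V$ (which contributes at most one extra generator to $\langle X\rangle$) together with $\Phidim(\D)=n<\infty$.

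First I would reduce to the case in which $V$ is indecomposable non-projective and does not belong to $\D$; otherwise $\D\oplus\add\,V$ has the same indecomposable non-projective summands as $\D$, and the conclusion is immediate. For $X=D\oplus V^s$ with $s\geq 1$, since $V$ is indecomposable one has $\langle X\rangle=\langle D\rangle+\mathbb{Z}[V]$ independently of $s\geq 1$, hence $\Phi(X)=\Phi(D\oplus V)$. Writing
\[
\mathrm{rk}\,L^k\langle D\oplus V\rangle=\mathrm{rk}\,L^k\langle D\rangle+\epsilon_k,\qquad\epsilon_k\in\{0,1\},
\]
with $\epsilon_k=0$ iff $[\Omega^k V]\in L^k\langle D\rangle\otimes\mathbb{Q}$, I would check that $\epsilon_k$ is non-increasing: if $[\Omega^kV]=\sum_i a_iL^k[D_i]$ with $a_i\in\mathbb{Q}$ and $D_i$ summands of $D$, then applying $L$ yields $[\Omega^{k+1}V]\in L^{k+1}\langle D\rangle\otimes\mathbb{Q}$.

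Setting $k_0(D):=\min\{k:\epsilon_k=0\}\in\mathbb{N}\cup\{\infty\}$, both $\mathrm{rk}\,L^k\langle D\rangle$ (constant for $k\geq n$) and $\epsilon_k$ stabilise for $k\geq\max(n,k_0(D))$, yielding the pointwise bound $\Phi(D\oplus V)\leq\max(n,k_0(D))$, with the convention that the right-hand side equals $n$ when $k_0(D)=\infty$. The whole problem therefore reduces to a uniform bound $\sup_{D\in\D}k_0(D)<\infty$. To obtain this I would argue by contradiction: if $k_0(D_j)\to\infty$ for some sequence in $\D$, by additive closure every indecomposable non-projective summand of $D_j$ is itself in $\D$, so one can replace $D_j$ by an indecomposable summand $S_j\in\D$ whose value $k_0(\{S_j\})$ is still large. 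The identity $[\Omega^{k_0(\{S_j\})}V]\in\mathbb{Q}[\Omega^{k_0(\{S_j\})}S_j]$ then forces $[\Omega^kS_j]$ and $[\Omega^kV]$ to span the same one-dimensional subspace of $K_{\proj\,\Lambda}\otimes\mathbb{Q}$ for $k\geq k_0(\{S_j\})$. Taking two such $S_{j_1},S_{j_2}\in\D$ with $k_0$-values both large, $\mathrm{rk}\,L^k\langle S_{j_1}\oplus S_{j_2}\rangle$ must drop from two to one at step $\max(k_0(\{S_{j_1}\}),k_0(\{S_{j_2}\}))$, whence $\Phi(S_{j_1}\oplus S_{j_2})\to\infty$; since $S_{j_1}\oplus S_{j_2}\in\D$, this contradicts $\Phidim(\D)\leq n$.

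The main technical obstacle I expect is to make this orbit-merge argument rigorous when $\Omega^k V$ or $\Omega^k S$ decomposes non-trivially into indecomposables, in which case the clean one-dimensional picture above must be replaced by a careful linear-algebra argument in $K_{\proj\,\Lambda}\otimes\mathbb{Q}$ controlling the intersections $L^k\langle S_{j_1}\rangle\cap L^k\langle S_{j_2}\rangle$ and their rank behaviour in terms of the $k_0$-values.
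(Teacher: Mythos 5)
Your $k_0$-framework is a clean reformulation, and everything up to and including the pointwise bound $\Phi(D\oplus V)\le\max(n,k_0(D))$ is correct (with the stated convention when $k_0(D)=\infty$). The gap you flag at the end, however, is real, and it sits earlier than where you locate it: the problem is not whether $\Omega^kV$ or $\Omega^kS$ decomposes, but the reduction from $D_j$ to a \emph{single} indecomposable summand $S_j\in\D$ with finite $k_0$. A relation witnessing $k_0(D_j)<\infty$ may genuinely involve several indecomposable summands of $D_j$ with nonzero coefficients in $K_{\proj\,\Lambda}\otimes\mathbb{Q}$, in such a way that no $[\Omega^kS]$ is ever proportional to $[\Omega^kV]$ for any single indecomposable summand $S$ of $D_j$; in that case $k_0(\{S\})=\infty$ for every such $S$, the one-dimensional orbit-merge picture never arises, and the contradiction you are aiming for is out of reach.

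The repair is to not reduce at all, and then your own framework closes the argument in essentially the same way the paper does. Since $\langle D_1\oplus D_2\rangle\supseteq\langle D_i\rangle$, one has $k_0(D_1\oplus D_2)\le\min\{k_0(D_1),k_0(D_2)\}$. Fix any $\tilde D\in\D$ with $n<k_0(\tilde D)<\infty$ (if no such $\tilde D$ exists, then $\Phi(D\oplus V)\le n$ for every $D\in\D$ and you are done). For arbitrary $D'\in\D$, additive closure gives $\tilde D\oplus D'\in\D$, and the monotonicity $\Phi(X)\le\Phi(X\oplus Y)$ together with your bound applied to $\tilde D\oplus D'$ yields
$$\Phi(D'\oplus V)\;\le\;\Phi(\tilde D\oplus D'\oplus V)\;\le\;\max\bigl(n,k_0(\tilde D\oplus D')\bigr)\;\le\;k_0(\tilde D),$$
a uniform bound obtained without any passage to indecomposables. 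This is, in different notation, exactly what the paper's manipulation of the level-$m$ and level-$s$ relations accomplishes: with $m=k_0(\tilde D)$, the contradiction derived from a hypothetical $D'$ with $\Phi(D'\oplus V)=r>m$ is precisely the assertion that $k_0(\tilde D\oplus D')\le k_0(\tilde D)$ forces $\Phi(\tilde D\oplus D'\oplus V)\le m<r$.
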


\begin{proof}
	First we observe that $\Phidim(\D\oplus\add\,V) = \Phidim(\D \oplus V)$, so we will prove that the second term is bounded. If $\pd(V)<\infty,$ then $$\Phidim(\D\oplus V)=\max\{\Phidim(\D),\pd(V)\}<\infty.$$
    Assume now that $\pd(V)=\infty$ and that there exists $\tilde{D}\in\D$ such that $\Phi(\tilde{D}\oplus V)=m>n.$ We affirm that $\Phidim(\D\oplus V)\leq m.$ To prove the affirmation let us suppose that it is not true, so assume there is some $D'\in\D$ such that $\Phi(D'\oplus V)=r>m>n.$ This means that $s:=\Phi(D'\oplus \tilde{D}\oplus V)\geq r>m>n.$ By the definition of the $\Phi$ function there must be a non trivial relation (with integer coefficients) of the form 
	\begin{equation}\label{eq:level_s_relation}
		M_1[\Omega^sV] = \sum_{j=1}N_j[\Omega^s\tilde{D}_j] + \sum_{k=1}R_k[\Omega^sD'_k],
	\end{equation}
	where $\{\tilde{D}_j\}$ and $\{D'_k\}$ are the set of nonisomorphic, indecomposable, nonprojective direct summands of $\tilde{D}$ and $D'.$ This relation cannot occur at the $s-1$ syzygy level, also the coefficient $M_1$ is nonzero because $s>n$, and at least one of the coefficients $\{R_k\}$ is nonzero, since $s>m$.\\
	In a similar way, using that $\Phi(\tilde{D}\oplus V)=m<s$ there must be a non trivial relation of the form 
	\begin{equation}\label{eq:level_m_relation}
		M'_1[\Omega^mV] = \sum_{j=1}N'_j[\Omega^m\tilde{D}_j].
	\end{equation}
	This relation cannot occur at the $m-1$ syzygy level and similarly to equation (\ref{eq:level_s_relation}), the coefficient $M_1'$ is nonzero because $m>n$ and at least one of the coefficients $\{N_j'\}$ is nonzero since $V$ has infinite projective dimension. Because of the fact that $m<s$ equation (\ref{eq:level_m_relation}) will remain the same at the $s$ syzygy level (and at the $s-1$ syzygy level), so we get 
	\begin{equation}\label{eq:level_m_s_relation}
		M'_1[\Omega^sV] = \sum_{j=1}N'_j[\Omega^s\tilde{D}_j].
	\end{equation}
	We multiply equation (\ref{eq:level_s_relation}) by $M_1'$ and equation (\ref{eq:level_m_s_relation}) by $M_1$, then we get the following equality:
	\begin{equation*}
		\sum_{j=1}(M_1'N_j-M_1N'_j)[\Omega^s\tilde{D}_j] + \sum_{k=1}M_1'R_k[\Omega^sD'_k] = 0.
	\end{equation*}
	This is a nontrivial relation because for some $k$ the coefficient $M_1'R_k\neq 0$, but since $s>n$, this relation has to occur at the $s-1$ syzygy level. We can rewrite it as follows:
	\begin{equation*}
		\sum_{j=1}M_1'N_j[\Omega^{s-1}\tilde{D}_j] + \sum_{k=1}M_1'R_k[\Omega^{s-1}D'_k] = \sum_{j=1}M_1N'_j[\Omega^{s-1}\tilde{D}_j].
	\end{equation*}
	At this point we use that equation (\ref{eq:level_m_s_relation}) holds at the $s-1$ syzygy level. This means that the previous equation can be rewritten as
	\begin{equation*}
		\sum_{j=1}M_1'N_j[\Omega^{s-1}\tilde{D}_j] + \sum_{k=1}M_1'R_k[\Omega^{s-1}D'_k] = M_1M_1'[\Omega^{s-1}V].
	\end{equation*}
	Now we divide by $M_1'$ and obtain 
	\begin{equation}\label{eq:level_s-1_relation}
		\sum_{j=1}N_j[\Omega^{s-1}\tilde{D}_j] + \sum_{k=1}R_k[\Omega^{s-1}D'_k] = M_1[\Omega^{s-1}V].
	\end{equation}
	Note that equation (\ref{eq:level_s-1_relation}) is the same as equation (\ref{eq:level_s_relation}) but at the $s-1$ level of syzygy, which is a contradiction. This shows that the affirmation $\Phidim(\D\oplus\add\,V)\leq m$ is true and this finishes the proof.
\end{proof}

Next, we use induction to prove that the above result is valid even if $V$ is not indecomposable.

\begin{lem}\label{lem:D_plus_V}
	Let $V\in\modu\,\Lambda$ be any module and $\D\subseteq\modu\,\Lambda$ be an additively closed class such that $\Phidim(\D)=n<\infty$. Then $\Phidim(\D\oplus\add\,V)<\infty.$
\end{lem}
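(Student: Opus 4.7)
The plan is to reduce to Lemma \ref{lem:D_plus_indecomposable} by induction on the number of indecomposable direct summands of $V$.

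First, I would write $V = V_1 \oplus V_2 \oplus \cdots \oplus V_k$ with each $V_i$ indecomposable (such a decomposition exists and is essentially unique because $\modu\,\Lambda$ is Krull--Schmidt). Note that $\add\,V = \add\,(V_1\oplus\cdots\oplus V_k)$. I then proceed by induction on $k$.

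The base case $k=1$ is exactly Lemma \ref{lem:D_plus_indecomposable}. For the inductive step, set $W := V_1\oplus\cdots\oplus V_{k-1}$ and $\D' := \D \oplus \add\,W$. By the inductive hypothesis applied to $W$, we have $\Phidim(\D') < \infty$. The key verification to perform here is that $\D'$ is additively closed: finite direct sums of elements in $\D'$ remain in $\D'$ because both $\D$ and $\add\,W$ are additively closed, and any direct summand of $D \oplus W'$ (with $D\in\D$, $W'\in\add\,W$) is, by Krull--Schmidt, isomorphic to $D''\oplus W''$ with $D''$ a summand of $D$ and $W''$ a summand of $W'$; since $\add\,\D = \D$ and $\add\,W$ is clearly closed under summands, such a module lies in $\D'$.

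Since $\D'$ is additively closed, has finite $\Phi$-dimension, and $V_k$ is indecomposable, Lemma \ref{lem:D_plus_indecomposable} applied to the pair $(\D',V_k)$ yields $\Phidim(\D' \oplus \add\,V_k) < \infty$. Finally, I would observe that
\[
\D' \oplus \add\,V_k \;=\; \D \oplus \add\,W \oplus \add\,V_k \;=\; \D \oplus \add\,V,
\]
which finishes the induction. The only real subtlety is the bookkeeping verifying that $\D \oplus \add\,W$ inherits additive closure; everything else is a direct appeal to the already-proved indecomposable case.
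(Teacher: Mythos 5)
Your proof is correct and follows essentially the same inductive argument as the paper: induct on the number of indecomposable direct summands of $V$, take $\D' = \D \oplus \add(V_1\oplus\cdots\oplus V_{k-1})$, and apply Lemma \ref{lem:D_plus_indecomposable} in both the base and inductive steps. The one point where you add detail — explicitly checking that $\D'$ is additively closed so that Lemma \ref{lem:D_plus_indecomposable} applies — is a necessary verification that the paper leaves implicit, so your write-up is if anything slightly more complete.
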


\begin{proof}
	We use induction in the number of indecomposable summands of $V$. First if $V$ is indecomposable, then by the previous lemma we have the result. Assume that it is true for $s-1$ indecomposable direct summands and let $V=\oplus_{i=1}^sV_i$ be the Krull-Schmidt decomposition of $V$. It is not hard to see that $\D\oplus \add\,V=\D\oplus \add\,(\oplus_{i=1}^{s-1}V_i) \oplus \add\,V_s$. By the induction hypothesis $\Phidim(\D\oplus \add\,(\oplus_{i=1}^{s-1}V_i))<\infty$ and since $V_s$ is indecomposable, we can apply Lemma \ref{lem:D_plus_indecomposable} to obtain that $\Phidim(\D\oplus \add\,V)<\infty.$
\end{proof}

\begin{lem}\label{lem:previo_findim_finita}
	Let $\D\subseteq\modu\,\Lambda$ be such that $\Phidim(\D)<\infty$, $\D$ is additively closed and it is $t$-syzygy invariant for some $t\geq 1$. Let $V\in\modu\,\Lambda$, then
	\begin{itemize}
		\item[(i)] $\Phidim(\D\oplus\add\,V)=m<\infty.$
		\item[(ii)] For any $k\geq 1$, $\Phidim(\Omega^k(\D\oplus\add\,V))<\infty.$
		\item[(iii)] $\Psidim(\D\oplus\add\,V)<\infty.$
	\end{itemize}
\end{lem}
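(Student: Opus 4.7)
The plan is to prove the three items in order. Item (i) is essentially immediate from Lemma~\ref{lem:D_plus_V}; the work is in (ii), and (iii) is then an application of Corollary~\ref{cor:Psidim_finite2}.

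For \textbf{item (i)}, I would simply invoke Lemma~\ref{lem:D_plus_V} directly: $\D$ is additively closed with $\Phidim(\D)<\infty$, so regardless of the value of $t$ we get $\Phidim(\D\oplus\add\,V)<\infty$. The hypothesis of $t$-syzygy invariance is not needed here.

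For \textbf{item (ii)}, my strategy is to reduce to the situation of Lemma~\ref{lem:D_plus_V} by first establishing the auxiliary bound
\[
\Phidim(\add\,\Omega^{r}\D)\ \leq\ \Phidim(\D)+t\qquad\text{for every } r\geq 0.
\]
To prove this, I write $r=qt+r'$ with $0\leq r'<t$; iterating the $t$-syzygy invariance of $\D$ gives $\Omega^{r}\D\subseteq \Omega^{r'}\D$, so it suffices to bound $\Phidim(\add\,\Omega^{r'}\D)$. Given $X\in\add\,\Omega^{r'}\D$, additive closure of $\D$ expresses $X$ as a direct summand of $\Omega^{r'}D$ for a single $D\in\D$; since syzygies commute with direct sums, $\Omega^{t-r'}X$ appears as a direct summand of $\Omega^{t}D\in\D$, and by additive closure of $\D$ we conclude $\Omega^{t-r'}X\in\D$. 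Iterating Proposition~\ref{pro:huard}(a) then yields
\[
\Phi(X)\ \leq\ \Phi(\Omega^{t-r'}X)+(t-r')\ \leq\ \Phidim(\D)+t.
\]
With this bound in hand, I note that $\Omega^{k}(\D\oplus\add\,V)\subseteq \add\,\Omega^{k}\D\oplus\add\,\Omega^{k}V$. The class $\add\,\Omega^{k}\D$ is additively closed and, by the auxiliary estimate, has finite $\Phi$-dimension. Applying Lemma~\ref{lem:D_plus_V} to $\add\,\Omega^{k}\D$ together with the module $\Omega^{k}V$ gives $\Phidim(\add\,\Omega^{k}\D\oplus\add\,\Omega^{k}V)<\infty$, from which (ii) follows.

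For \textbf{item (iii)}, combining (i) and (ii) with $k=m$ I get $\Phidim(\D\oplus\add\,V)=m<\infty$ and $\Phidim(\Omega^{m}(\D\oplus\add\,V))<\infty$, so Corollary~\ref{cor:Psidim_finite2} applied with $t=m$ yields $\Psidim(\D\oplus\add\,V)\leq \Phidim(\Omega^{m}(\D\oplus\add\,V))+m<\infty$.

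The main obstacle is the auxiliary $\Phi$-dimension estimate in (ii). The tension is that Proposition~\ref{pro:huard}(a) only controls $\Phi(X)$ from above by $\Phi(\Omega X)+1$, not the other direction, so a priori taking syzygies can only make $\Phi$ worse to bound. The key idea is to exploit the $t$-syzygy invariance to guarantee that after at most $t-r'$ further syzygies we land back inside $\D$, where $\Phi$ is uniformly bounded, and then to \emph{propagate this bound backwards} through the finite chain using Proposition~\ref{pro:huard}(a). Once that step is set up correctly, everything else is routine.
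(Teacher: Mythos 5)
Your proof is correct and uses the same core ingredients as the paper's (Lemma~\ref{lem:D_plus_V}, Proposition~\ref{pro:huard}(a), and Corollary~\ref{cor:Psidim_finite2}), but item (ii) is organized differently. The paper iterates Proposition~\ref{pro:huard}(a) from $\Omega^k$ all the way up to $\Omega^{kt}$ --- paying a penalty of $k(t-1)$ --- so as to land inside $\D\oplus\add\,\Omega^{kt}V$ (using that $\Omega^{kt}\D\subseteq\D$), and then applies Lemma~\ref{lem:D_plus_V} to that class. You instead first establish the uniform estimate $\Phidim(\add\,\Omega^{r}\D)\leq\Phidim(\D)+t$ for all $r\geq 0$, by writing $r=qt+r'$ with $0\leq r'<t$, extracting a summand $X$ of $\Omega^{r'}D$, observing that $\Omega^{t-r'}X\in\D$, and iterating~\ref{pro:huard}(a) only $t-r'$ times; you then apply Lemma~\ref{lem:D_plus_V} directly to the pair $\bigl(\add\,\Omega^{k}\D,\ \Omega^{k}V\bigr)$ at level $k$. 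Your route is slightly more involved but isolates a reusable, $r$-uniform bound on $\Phidim(\add\,\Omega^{r}\D)$, whereas the paper's route is shorter because it avoids the auxiliary estimate by exploiting that multiples of $t$ send $\D$ back into itself. One small detail: in (iii) you invoke (ii) with $k=m$, but (ii) as stated assumes $k\geq 1$; when $m=0$, Corollary~\ref{cor:Psidim_finite2} applies trivially with $t=0$ (the paper flags explicitly that $m=0$ forces $\Psidim(\D\oplus\add\,V)=0$).
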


\begin{proof}
	The proof of $(i)$ is immediate from Lemma \ref{lem:D_plus_V}.\\
	For $(ii)$, let $D_0\oplus V_0\in \D\oplus\add\,V$ we get $$\Phi(\Omega^k(D_0\oplus V_0))\leq \Phi(\Omega^{kt}(D_0\oplus V_0)) + k(t-1) = \Phi(\Omega^{kt}(D_0)\oplus \Omega^{kt}(V_0))) + k(t-1).$$
	Since $\D$ is $t$-syzygy invariant, then it is also $kt$-syzygy invariant. Hence we obtain 
	$$\Phidim(\Omega^k(\D\oplus\add\,V)) \leq \Phidim(\D\oplus \add\,\Omega^{kt}V) + k(t-1)<\infty.$$
	Note that in the particular case $k=qt$, 
	$$\Phidim(\Omega^k(\D\oplus\add\,V))\leq \Phidim(\D\oplus \add\,\Omega^{k}V)<\infty.$$
	The proof of $(iii)$ follows from the previous items in combination with Corollary \ref{cor:Psidim_finite2}, taking $k:=tm$. Note that $m=0$ implies $\Psidim(\D\oplus\add\,V)=0.$
\end{proof}

At this point we are ready to give the definition of a GLIT class.

\begin{defi}\label{defi:GLIT_class} Let $\Lambda$ be an Artin algebra and $\C$ be a subcategory of $\modu\,\Lambda$. We say $\C$ is an $n$\textbf{-GLIT} class, where $n\geq 0$, if the following two conditions hold:
	\begin{itemize}
		\item[(a)]  there is a class $\D\subseteq\modu\,\Lambda$ that is additively closed,  $\Omega^t(\D)\subseteq\D$ for some $t\geq 1$ and $\Phidim\,(\D)<\infty;$
		\item[(b)] there exists $V\in\modu\,\Lambda$ satisfying that each $C\in\C$ admits an exact sequence 
		$$0\longrightarrow X_1\longrightarrow X_0\longrightarrow \Omega^nC\longrightarrow 0,$$ such that $X_1=V_1\oplus D_1$, $X_0=V_0\oplus D_0$, with $V_1,V_0\in \add\,V$ and $D_1,D_0\in \D.$
	\end{itemize}
	In case we need to specify all the parameters in the above definition, we say that $\C$ is an $(n,t,V,\D)$-GLIT class.
\end{defi}

\begin{defi}\label{GLIT_algebra}
	We say that an Artin algebra $\Lambda$ is GLIT if $\modu\,\Lambda$ is an $(n,t,V,\D)$-GLIT class.
\end{defi}

\begin{remark}\label{rmk_GLIT class_condition(1)}
	If $\C$ is an $(n,t,V,\D)$-GLIT class, then $\D\oplus\add\,V$ satisfies items (i), (ii) and (iii) of Lemma \ref{lem:previo_findim_finita}.
\end{remark}

The following remark will be used in the proof of Theorem \ref{teo:Morita_GLIT}.

\begin{remark}\label{addD}
	It is possible to obtain a class $\D$, as in item $(a)$ of Definition \ref{defi:GLIT_class}, if what we have instead is a class $\D'$ that is closed under direct sums, is $t$-syzygy invariant and $\Phidim\,(\D')<\infty.$ Namely the class $\D:=\add(\D')$ satisfies all the conditions in $(a)$:
	\begin{itemize}
		\item It is clear from the definition that $\D$ is additively closed.
		\item Take $X\in \D$, then there exists $Y\in\D$ such that $X\oplus Y\in \D'$. Because $\D'$ is $t$-syzygy invariant and the syzygy operator commutes with direct sums we have $\Omega^t(X)\oplus\Omega^t(Y)\in \D'$, hence $\Omega^t(X)\in \add(\D')=\D,$ so we get $\Omega^t(\D)\subseteq \D.$
		\item Finally, let $X\in \D$ and $Y\in\D$ be such that $X\oplus Y\in \D'$. From the properties of the $\Phi$ function, we have $\Phi(X)\leq\Phi(X\oplus Y)\leq\Phidim(\D')$, hence $\Phidim\,(\D)=\Phidim\,(\D')<\infty$.
	\end{itemize} 
\end{remark}

\begin{pro}\label{pro:GLIT_bigger_integers}
	If $\C$ is $(n,t,V,\D)$-GLIT, then $\C$ is $(m,t,\tilde{V},\tilde{\D})$-GLIT for every $m\geq n$. 
\end{pro}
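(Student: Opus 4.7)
My plan is to iterate the Horseshoe Lemma $k:=m-n$ times on the short exact sequence given by the $(n,t,V,\D)$-GLIT hypothesis, and then read off natural candidates for $\tilde{V}$ and $\tilde{\D}$ from the syzygies that appear. Concretely, for each $C\in\C$ I would start with
$$0\longrightarrow V_1\oplus D_1\longrightarrow V_0\oplus D_0\longrightarrow \Omega^n C\longrightarrow 0,$$
with $V_i\in\add V$ and $D_i\in\D$. A routine induction on $k$ (using that $\Omega$ commutes with direct sums and that projective summands get absorbed at the next application of $\Omega$) will produce, for some projective $Q_k$, a short exact sequence
$$0\longrightarrow \Omega^k V_1\oplus \Omega^k D_1\longrightarrow \Omega^k V_0\oplus \Omega^k D_0\oplus Q_k\longrightarrow \Omega^m C\longrightarrow 0.$$

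I would then choose $\tilde{V}:=\Omega^{m-n}V\oplus\Lambda$ and $\tilde{\D}:=\add(\Omega^{m-n}\D)$. With these, $\Omega^k V_i\in\add(\Omega^k V)\subseteq\add\tilde{V}$ since $V_i\in\add V$, the projective summand $Q_k$ lies in $\add\Lambda\subseteq\add\tilde{V}$, and $\Omega^k D_i\in\Omega^{m-n}\D\subseteq\tilde{\D}$ because $D_i\in\D$; so the displayed sequence fits the $(m,t,\tilde{V},\tilde{\D})$-GLIT form. Additive closure of $\tilde{\D}$ is immediate from its definition, and $t$-syzygy invariance follows once I observe that any $Y\in\tilde{\D}$ is a direct summand of some $\Omega^{m-n}D$ with $D\in\D$ (using $\add\D=\D$ and that $\Omega$ preserves direct sums), so $\Omega^t Y$ is a summand of $\Omega^{m-n}(\Omega^t D)$, and $\Omega^t D\in\D$.

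The main obstacle will be bounding $\Phidim(\tilde{\D})$, given that $\D$ is only $t$-syzygy invariant and not $1$-syzygy invariant in general. I plan to resolve it as follows: for $Y$ a direct summand of $\Omega^{m-n}D$ with $D\in\D$, Proposition \ref{pro:properties_gen_IT_fun}(c) applied to the class of projectives gives $\Phi(Y)\leq\Phi(\Omega^{m-n}D)$, and iterating Proposition \ref{pro:huard}(a) yields $\Phi(\Omega^{m-n}D)\leq\Phi(\Omega^{m-n+s}D)+s$ for every $s\geq 0$. Picking $s\in\{0,\ldots,t-1\}$ so that $m-n+s$ is a multiple of $t$ places $\Omega^{m-n+s}D$ back in $\D$ by $t$-syzygy invariance, which bounds $\Phi(Y)\leq\Phidim(\D)+t-1<\infty$ uniformly in $D$ and $Y$, hence $\Phidim(\tilde{\D})\leq\Phidim(\D)+t-1$ and we are done.
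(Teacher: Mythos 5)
Your proposal is correct and follows essentially the same route as the paper: both choose $\tilde{V}=\Omega^{m-n}V\oplus\Lambda$ and $\tilde{\D}=\add(\Omega^{m-n}\D)$, obtain the short exact sequence ending in $\Omega^m C$ by applying the syzygy operator $m-n$ times (the paper leaves the Horseshoe Lemma details implicit), and bound $\Phidim(\tilde{\D})$ by shifting further until a power of $t$ is reached and invoking Proposition \ref{pro:huard}. The only difference is cosmetic: you shift by the minimal $s\in\{0,\dots,t-1\}$ with $t\mid m-n+s$, giving $\Phidim(\tilde{\D})\leq\Phidim(\D)+t-1$, whereas the paper shifts by $(m-n)(t-1)$ and gets the weaker but equally sufficient bound $\Phidim(\D)+(m-n)(t-1)$.
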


\begin{proof}
	Let us define $\tilde{V}:=\Omega^{m-n}V\oplus \Lambda$ and $\tilde{\D}:=\add\,\Omega^{m-n}\D$. We can check that item (b) of Definition \ref{defi:GLIT_class} holds by applying the syzygy operator $m-n$ times to the short exact sequences given by the fact that $\C$ is GLIT.\\
	To see that item (a) holds it suffices, because of the previous remark, to prove that $\Phidim(\Omega^{m-n}\D)<\infty$, since the rest of the conditions are easily verified. For any $Z\in\Omega^{m-n}\D$ we have that $\Omega^{(m-n)(t-1)}(Z)\in \D$, then by Proposition \ref{pro:huard} we get $$\Phi(Z)\leq \Phi(\Omega^{(m-n)(t-1)}(Z)) + (m-n)(t-1)\leq d + (m-n)(t-1)<\infty,$$ where $d=\Phidim(\D).$
\end{proof}

Now we can prove that GLIT algebras satisfy the finitistic dimension conjecture.

\begin{teo}\label{teo:GLIT_findim}
	Let $\Lambda$ be an Artin algebra and $\C\subseteq\modu\,\Lambda$ be an $(n,t,V,\D)$-GLIT class. Then $\findim(\C)<\infty.$ In particular, if $\Lambda$ is a GLIT algebra, then it satisfies the $\findim$ conjecture.
\end{teo}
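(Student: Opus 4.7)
The strategy is to produce a uniform bound on $\pd(C)$ that is valid for every $C\in\C$ of finite projective dimension, by pushing the estimate through the $\Psi$-function rather than $\Phi$. The GLIT hypothesis gives us a short exact sequence covering $\Omega^nC$ whose outer terms belong to a fixed class, and the preliminary work of Lemma \ref{lem:previo_findim_finita} already tells us that such a class has finite $\Psi$-dimension. So the whole proof should be a clean two-line application of the classical Igusa-Todorov bound.

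More precisely, I would fix $C\in\C$ with $\pd(C)<\infty$ and apply condition (b) of Definition \ref{defi:GLIT_class} to obtain
$$0\longrightarrow V_1\oplus D_1\longrightarrow V_0\oplus D_0\longrightarrow \Omega^nC\longrightarrow 0,$$
with $V_0,V_1\in\add\,V$ and $D_0,D_1\in\D$. Since $\pd(C)<\infty$, also $\pd(\Omega^nC)<\infty$. Now I would invoke the standard Igusa-Todorov inequality (the one from \cite{IT} that is the whole point of the $\Psi$-function): whenever $0\to A\to B\to M\to 0$ is exact with $\pd(M)<\infty$, one has $\pd(M)\leq \Psi(A\oplus B)+1$. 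Applying this to the displayed sequence and setting $X:=(V_0\oplus D_0)\oplus(V_1\oplus D_1)\in\D\oplus\add\,V$, I obtain
$$\pd(\Omega^nC)\leq \Psi(X)+1\leq \Psidim(\D\oplus\add\,V)+1.$$

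By Remark \ref{rmk_GLIT class_condition(1)}, the class $\D\oplus\add\,V$ satisfies the conclusions of Lemma \ref{lem:previo_findim_finita}, in particular $\Psidim(\D\oplus\add\,V)<\infty$. Combining this with the general inequality $\pd(C)\leq n+\pd(\Omega^nC)$, I conclude
$$\pd(C)\leq n+\Psidim(\D\oplus\add\,V)+1,$$
a bound independent of the particular $C\in\C$. Taking the supremum over all $C\in\C$ with $\pd(C)<\infty$ yields $\findim(\C)<\infty$, and specializing to $\C=\modu\,\Lambda$ gives the finitistic dimension conjecture for GLIT algebras.

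The only non-routine point is identifying which results do the real work: the GLIT hypothesis is used purely to \emph{manufacture} the short exact sequence with controlled outer terms, while the actual projective-dimension estimate is the unmodified classical Igusa-Todorov inequality for $\Psi$. The technical content that makes the argument work in this broader generality is entirely absorbed by Lemma \ref{lem:previo_findim_finita}(iii), which guarantees finite $\Psi$-dimension for $\D\oplus\add\,V$ under the weakened assumption $\Omega^t\D\subseteq\D$ for some $t\geq 1$ (rather than $t=1$, as in the LIT case). So no genuine obstacle arises here; the theorem is essentially a corollary of Lemma \ref{lem:previo_findim_finita} and the Igusa-Todorov bound.
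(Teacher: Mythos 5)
Your proof is correct and follows exactly the same route as the paper: fix $C$ with $\pd(C)<\infty$, use the GLIT short exact sequence for $\Omega^n C$, apply the classical Igusa--Todorov $\Psi$-inequality, invoke Lemma \ref{lem:previo_findim_finita}(iii) for the finiteness of $\Psidim(\D\oplus\add\,V)$, and add the shift $n$. No gaps.
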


\begin{proof}
	Let $C\in\C$ with $\pd(C)<\infty$. Then it is known that $$\pd(C)\leq\pd(\Omega^nC)+n.$$
	Now, let us use that $\C$ is a GLIT class and obtain a s.e.s of the form $$0\longrightarrow X_1\longrightarrow X_0\longrightarrow \Omega^nC\longrightarrow 0,$$ such that $X_1=V_1\oplus D_1$, $X_0=V_0\oplus D_0$, with $V_1,V_0\in \add\,V$ and $D_1,D_0\in \D.$ By the Igusa-Todorov inequality \cite[Theorem 4]{IT}, we have $$\pd(\Omega^nC)\leq\Psi(X_1\oplus X_0)+1\leq\Psidim(\D\oplus \add\,V)+1.$$ By Lemma \ref{lem:previo_findim_finita} $(iii)$ we know that $\Psidim(\D\oplus \add\,V)<\infty$, so in conclusion $$\findim(\C)\leq\Psidim(\D\oplus \add\,V)+n+1<\infty.$$
\end{proof}

\begin{ex}
	\begin{itemize}
		\item[(1)] If $\Lambda$ is $(n,V)$ Igusa-Todorov, then it is $(n,1,V,0)$-GLIT.
		\item[(2)] If $\Lambda$ is $(n,V,\D)$ Lat-Igusa-Todorov, then it is $(n,1,V,\D)$-GLIT.
		\item[(3)] If $\Phidim(\Lambda)<\infty$, then $\Lambda$ is $(0,1,0,\modu\,\Lambda)$-GLIT.
	\end{itemize}
\end{ex}

The next proposition gives necessary and sufficient conditions for a GLIT class to have finite finitistic dimension. In particular, it gives several characterisations of the finitistic dimension conjecture.

\begin{pro}\label{teo:special_GLIT_findim}
	Let $\Lambda$ be an Artin algebra and $\C$ a subcategory of $\modu\,\Lambda$. The following are equivalent:
	\begin{itemize}
		\item[(i)] $\findim(\C)<\infty.$
		\item[(ii)] $\pj^{<\infty}(\C)$ is a $0$-GLIT class.
		\item[(iii)] $\pj^{<\infty}(\C)$ is an $n$-GLIT class for every $n\geq 0$.
		\item[(iv)] There is some $m\geq 0$ and $\E\subseteq\modu\,\Lambda$ with $\Psidim(\E)<\infty$ and closed under direct sums such that for each $C\in\pj^{<\infty}(\C)$ there is a s.e.s. of the form $$0\rightarrow E_1\rightarrow E_0\rightarrow \Omega^mC\rightarrow 0, \ \text{with}\ E_1,E_0\in\E.$$
	\end{itemize}
\end{pro}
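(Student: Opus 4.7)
The plan is to close the cycle $(i) \Rightarrow (iii) \Rightarrow (ii) \Rightarrow (i)$ and to prove $(i) \Leftrightarrow (iv)$ separately. First observe that by the definition of $\pj^{<\infty}(\C)$ one has $\findim(\C) = \findim(\pj^{<\infty}(\C))$. Therefore $(iii) \Rightarrow (ii)$ is nothing but the specialisation $n = 0$, and $(ii) \Rightarrow (i)$ follows from Theorem \ref{teo:GLIT_findim} applied to the GLIT class $\pj^{<\infty}(\C)$. The content lies in $(i) \Rightarrow (iii)$, $(i) \Rightarrow (iv)$ and $(iv) \Rightarrow (i)$.

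The non-trivial forward implications $(i) \Rightarrow (iii)$ and $(i) \Rightarrow (iv)$ can both be handled with the same auxiliary class. Setting $f := \findim(\C)$, I would define
$$\D := \{X \in \modu\,\Lambda : \pd(X) \leq f\},$$
which is additively closed and $1$-syzygy invariant, since $\pd(\Omega X) \leq \pd(X) - 1 \leq f$. The key estimate is the pointwise bound $\Phi(X) \leq \pd(X)$ whenever $\pd(X) < \infty$; this is immediate from Definition \ref{defi:Phi} because $L^k\langle X\rangle = 0$ for all $k \geq \pd(X)$ and the ranks $\rk L^k\langle X\rangle$ form a non-increasing sequence in $k$. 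In particular $\Phidim(\D) \leq f$. An analogous computation yields $\Psidim(\D) \leq f$: for any $M \in \D$, every summand of $\Omega^{\Phi(M)}(M)$ has projective dimension at most $f - \Phi(M)$, so $\findim(\add\,\Omega^{\Phi(M)}M) \leq f - \Phi(M)$ and therefore $\Psi(M) \leq f$. Taking $V := 0$ in Definition \ref{defi:GLIT_class}, the fact that $\Omega^n C \in \D$ for every $n \geq 0$ and every $C \in \pj^{<\infty}(\C)$ allows one to use the trivial short exact sequence $0 \to 0 \to \Omega^n C \to \Omega^n C \to 0$ as the sequence required in condition (b), proving (iii); setting $m := 0$ and $\E := \D$, the same trivial sequence gives (iv).

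For $(iv) \Rightarrow (i)$ the plan is the expected one: given $C \in \C$ with $\pd(C) < \infty$, one has $\pd(C) \leq \pd(\Omega^m C) + m$, and the original Igusa--Todorov inequality \cite[Theorem 4]{IT} applied to $0 \to E_1 \to E_0 \to \Omega^m C \to 0$ yields $\pd(\Omega^m C) \leq \Psi(E_1 \oplus E_0) + 1$. Because $\E$ is closed under direct sums, $E_1 \oplus E_0 \in \E$ and hence $\Psi(E_1 \oplus E_0) \leq \Psidim(\E) < \infty$, giving the uniform bound $\findim(\C) \leq \Psidim(\E) + m + 1$.

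The delicate step, and the one I would write out most carefully, is the uniform bound $\Phidim(\D) \leq f$. Since $\D$ contains arbitrarily many non-isomorphic indecomposables the bound is not automatic: it rests on the pointwise estimate $\Phi(X) \leq \pd(X)$, which in turn depends on the Igusa--Todorov fact that the ranks $\rk L^k\langle X\rangle$ are non-increasing in $k$. Once this is secured, the rest reduces to routine manipulations of $\Phi$, $\Psi$ and the syzygy operator.
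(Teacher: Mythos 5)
Your proof is correct, and while the underlying idea — that $\Phi$ and $\Psi$ are controlled by projective dimension on the class of modules of bounded projective dimension — is the same one the paper exploits, you organise it differently and make a cleaner choice of auxiliary class. The paper proves $(i)\Rightarrow(ii)$ by taking $\D=\pj^{<\infty}(\C)$ and $t=\findim(\C)$ in Definition~\ref{defi:GLIT_class}, then appeals to Proposition~\ref{pro:GLIT_bigger_integers} for $(ii)\Rightarrow(iii)$ and to Remark~\ref{rmk_GLIT class_condition(1)} for $(iii)\Rightarrow(iv)$. You instead take the larger, $\C$-independent class $\D=\{X\in\modu\,\Lambda : \pd(X)\le f\}$ with $t=1$; this is obviously additively closed and $1$-syzygy invariant, whereas $\pj^{<\infty}(\C)$ requires $t=\findim(\C)$ and one must worry about whether it is additively closed when $\C$ is an arbitrary subcategory (a point the paper passes over in silence). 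Your choice lets you land on $(iii)$ and $(iv)$ directly, with $V=0$ and the trivial short exact sequence $0\to 0\to\Omega^n C\to\Omega^n C\to 0$, bypassing the intermediate citations. The direction $(iv)\Rightarrow(i)$ you give matches the paper's reference to the proof of Theorem~\ref{teo:GLIT_findim}. Both routes turn on the same observation, which you correctly isolate as the crux: $\Phi(X)\le\pd(X)$ and $\Psi(X)\le\pd(X)$ for $X$ of finite projective dimension, hence $\Phidim(\D)\le f$ and $\Psidim(\D)\le f$.
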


\begin{proof}\
\begin{itemize}
	\item[$(i)\Rightarrow (ii)$] Take $n=0, t=\findim(\C), V=0, \D=\pj^{<\infty}(\C)$. Note that $\Phidim(\pj^{<\infty}(\C))=\findim(\C)<\infty.$
	
	\item[$(ii)\Rightarrow (iii)$] Follows directly from Proposition \ref{pro:GLIT_bigger_integers}.
	
	\item[$(iii)\Rightarrow (iv)$] From Remark \ref{rmk_GLIT class_condition(1)} we obtain $\E:=\D\oplus\add\,V$ satisfies $\Psidim(\E)<\infty$ and it is closed under direct sums, so by the definition of GLIT class we get the desired short exact sequences.
	
	\item[$(iv)\Rightarrow (i)$] Analogous to the proof of Theorem \ref{teo:GLIT_findim}.
\end{itemize}
\end{proof}

\begin{defi}\label{defi:special_GLIT}
	Let $\Lambda$ be an Artin algebra and $\C$ a subcategory of $\modu\,\Lambda$. We say that $\C$ is \textbf{special GLIT} if it satisfies one of the equivalent conditions above. In particular, $\Lambda$ is a special GLIT algebra if $\modu\,\Lambda$ is a special GLIT class.
\end{defi}

We end this section with the following characterisation for the finitistic dimension conjecture. The proof follows from the above proposition and it is left to the reader.

\begin{teo}
	Let $\Lambda$ be an Artin algebra. Then, $\Lambda$ satisfies the finitistic dimension conjecture if and only if it is special GLIT.
\end{teo}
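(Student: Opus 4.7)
The plan is essentially to observe that the statement is a direct translation of the equivalence already established in Proposition \ref{teo:special_GLIT_findim}, instantiated at $\C=\modu\,\Lambda$.

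First, I would unwind the definitions. By Definition \ref{defi:special_GLIT}, the algebra $\Lambda$ being special GLIT means that $\modu\,\Lambda$ is a special GLIT class, which in turn means that $\C=\modu\,\Lambda$ satisfies any one (equivalently all) of the four conditions (i)--(iv) in Proposition \ref{teo:special_GLIT_findim}. Among those, condition (i) applied to $\C=\modu\,\Lambda$ is precisely $\findim(\modu\,\Lambda)<\infty$, i.e.\ the assertion that $\Lambda$ satisfies the finitistic dimension conjecture.

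So the forward direction is: assume $\findim(\Lambda)<\infty$; then condition (i) of Proposition \ref{teo:special_GLIT_findim} holds for $\C=\modu\,\Lambda$, hence so does (any of the other conditions, and in particular) the condition defining special GLIT, so $\Lambda$ is special GLIT. The converse direction is identical in reverse: if $\Lambda$ is special GLIT, then $\modu\,\Lambda$ satisfies one of (ii)--(iv), which by the equivalence in Proposition \ref{teo:special_GLIT_findim} forces (i), namely $\findim(\Lambda)<\infty$.

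There is no real obstacle here; the whole content has already been packaged into Proposition \ref{teo:special_GLIT_findim}, and the only thing to do is to specialise to $\C=\modu\,\Lambda$ and cite Definition \ref{defi:special_GLIT}. This is why the authors leave the proof to the reader.
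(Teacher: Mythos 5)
Your proof is correct and takes exactly the route the paper intends: the theorem is a direct specialisation of Proposition \ref{teo:special_GLIT_findim} to $\C=\modu\,\Lambda$, filtered through Definition \ref{defi:special_GLIT}, which is why the authors leave it to the reader.
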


%------------------------------------------------------------------------------------------------------------
\section{Morita algebras: an application}
%---------------------------------------------------------------------------------------------------------

In this section we give conditions for a Morita algebra to be GLIT or special GLIT in terms of the Morita context.\\
Let $\mathcal{M}=(T, N, M, U, \alpha, \beta)$ be a Morita context such that:
\begin{itemize} 
	\item $_{U}M, M_T, _{T}N, N_U$ are projective modules.
	\item $M\otimes_T N = N\otimes_U M =0.$  
\end{itemize}
Let $\Lambda_{\mathcal{M}}:= \begin{pmatrix}
T & N \\
M & U 
\end{pmatrix}$ be the Morita algebra associated to the Morita context. In this section we prove that, with the above conditions, $\Lambda_{\mathcal{M}}$ is GLIT if and only if $T$ and $U$ are GLIT. This provides new examples of GLIT algebras and as an application we give a partial answer to the question of whether the tensor product of GLIT algebras is again GLIT.\\

From now on $\mathcal{M}$ will be taken to be a Morita context with the above conditions and $\Lambda_{\mathcal{M}}$ the Morita algebra associated to it. In the next lemma and proposition we establish a relationship between $\Phi(A,M\otimes_TP,f,g)$ and $\Phi(A)$, where $P\in\proj\,T$.

\begin{lem}\label{lem:relation_A_and_(A,P,f)}
	Let $(A,M\otimes_TP,f,g)\in\modu\,\Lambda_{\mathcal{M}}$ such that $P\in\proj\,T$. Denote the set of indecomposable, pairwise non isomorphic, direct summands of $(A,M\otimes_TP,f,g)$ by $\{(A_i,Q_i,f_i,g_i)\}_{i=1}^r$. If for some $n\geq 0$ and $N_i\in\mathbb{Z}_{\geq 0}, \forall i=1,\cdots, r$ we have a linear combination of the form:
	$$N_1[\Omega^nA_1] + \cdots + N_s[\Omega^nA_s] = N_{s+1}[\Omega^nA_{s+1}] + \cdots + N_r[\Omega^nA_r],$$
	then there is also a linear combination of the form:
	\begin{align*}
		& N_1[\Omega^{n+1}(A_1,Q_1,f_1,g_1)] + \cdots + N_s[\Omega^{n+1}(A_s,Q_s,f_s,g_s)] = \\
		& = N_{s+1}[\Omega^{n+1}(A_{s+1},Q_{s+1},f_{s+1},g_{s+1})] + \cdots + N_r[\Omega^{n+1}(A_r,Q_r,f_r,g_r)].
	\end{align*}
\end{lem}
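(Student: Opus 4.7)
The plan is to construct a group homomorphism $\phi\colon K_{\proj\,T}\to K_{\proj\,\Lambda_{\mathcal{M}}}$ which, when applied term-wise to the hypothesised relation, yields the claimed relation. The construction of $\phi$ is induced by the assignment $X\mapsto \Omega((X,0,0,0))$ on $T$-modules, and once $\phi$ is in place the result is immediate since $\phi$ is $\mathbb Z$-linear and the coefficients $N_i$ are the same on both sides.

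First I would explicitly compute $\Omega^{n+1}(A_i,Q_i,f_i,g_i)$ using Remark~\ref{rmk:Syzygy_triang}(1). Since $(A_i,Q_i,f_i,g_i)$ is an indecomposable summand of $(A,M\otimes_T P,f,g)$, the second coordinate $Q_i$ is a summand of the projective $U$-module $M\otimes_T P$ and is therefore itself projective; moreover $N\otimes_U Q_i$, being a summand of $N\otimes_U M\otimes_T P=0$, vanishes. These two facts kill the $U$-side summand $(N\otimes_U P_n^{Q_i},\Omega^{n+1}Q_i,0,1\otimes i_{n+1})$ in every case (for $n\geq 1$ because $Q_i$ is projective; for $n=0$ via $N\otimes_U Q_i=0$), leaving
$$\Omega^{n+1}(A_i,Q_i,f_i,g_i)=(\Omega^{n+1}A_i,\,M\otimes_T P_n^{A_i},\,1\otimes i_{n+1},\,0).$$
The same computation with $B=0$ in place of $Q_i$ shows that this also equals $\Omega^{n+1}(A_i,0,0,0)$, which by Remark~\ref{rmk:Syzygy_triang}(4) equals $\Omega((\Omega^n A_i,0,0,0))$.

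Next I would define $F(X):=\Omega((X,0,0,0))$ for $X\in\modu\,T$. Because $(-,0,0,0)$ and the minimal syzygy operator are additive on direct sums, so is $F$. The key observation is that if $P\in\proj\,T$ then $F(P)=(0,\,M\otimes_T P,\,0,\,0)$, which is a projective $\Lambda_{\mathcal{M}}$-module: it has exactly the form of an indecomposable projective $(N\otimes_U Q,Q,0,1)$ with $Q:=M\otimes_T P\in\proj\,U$ and $N\otimes_U Q=0$. Consequently, setting $\phi([X]):=[F(X)]$ for each basis element $[X]$ of $K_{\proj\,T}$ (i.e. $X$ indecomposable non-projective) and extending $\mathbb Z$-linearly yields a well-defined homomorphism $\phi\colon K_{\proj\,T}\to K_{\proj\,\Lambda_{\mathcal{M}}}$ with the property that $\phi([Z])=[F(Z)]$ for every $Z\in\modu\,T$; projective summands of $Z$ contribute $0$ on both sides.

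Finally, applying $\phi$ to the hypothesised equality and using the identification $F(\Omega^n A_i)=\Omega^{n+1}(A_i,Q_i,f_i,g_i)$ from the first step produces the desired relation in $K_{\proj\,\Lambda_{\mathcal{M}}}$. The main obstacle is the syzygy computation in the first step; one must carefully verify, via Remark~\ref{rmk:Syzygy_triang}(1), that the $U$-side summand vanishes for all $n\geq 0$, which requires using both the projectivity of $Q_i$ (for $n\geq 1$) and the hypothesis $N\otimes_U M=0$ (for $n=0$). After that the remaining steps amount to a formal linear-algebraic manipulation in the two Grothendieck-type groups.
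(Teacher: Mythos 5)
Your proof is correct and takes a genuinely different, cleaner route than the paper's. The paper argues concretely: it converts the input relation into an isomorphism by adding projectives, decomposes $\Omega^n A_i=M_i\oplus P_i$ via Krull--Schmidt to separate the projective part, applies $\Omega$ to the non-projective remnant, and then reassembles everything by hand through explicit commutative diagrams that match up $P_0^{M_i}$, $P_i$ and $P_n^{A_i}$. You instead package all of that bookkeeping into a single structural observation: the assignment $F(X)=\Omega((X,0,0,0))$ is additive and sends projective $T$-modules to projective $\Lambda_{\mathcal M}$-modules (since $F(P)=(0,M\otimes_T P,0,0)$ is one of the standard indecomposable projectives once $N\otimes_U M=0$ kills the first coordinate), hence induces a $\mathbb Z$-linear map $\phi\colon K_{\proj\,T}\to K_{\proj\,\Lambda_{\mathcal M}}$ on the stable Grothendieck-type groups. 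Your syzygy computation $\Omega^{n+1}(A_i,Q_i,f_i,g_i)=F(\Omega^n A_i)$ is verified carefully --- you correctly use the projectivity of $Q_i$ for $n\geq 1$ and $N\otimes_U Q_i=0$ for $n=0$ to kill the $U$-side summand of Remark~\ref{rmk:Syzygy_triang}(1), and Remark~\ref{rmk:Syzygy_triang}(4) for the regrouping --- after which the conclusion is literally ``a homomorphism preserves $\mathbb Z$-linear relations.'' This buys you conceptual clarity and economy: the Krull--Schmidt case analysis that occupies most of the paper's argument is absorbed into the well-definedness of $\phi$ on $K_{\proj}$, and the $U$-side analogue (Lemma~\ref{lem:relation_B_and_(Q,B,f,g)}) would follow from the identical template with $G(Y)=\Omega((0,Y,0,0))$. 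The only point worth a sentence of care is that the $\Omega$ computed by Lemma~\ref{Syzygy_triang} may not be the minimal syzygy of the pair module; but $[\Omega X]$ is well-defined modulo projectives by Schanuel's lemma, which is all that the $K_{\proj}$-level identity requires, and this subtlety is present in the paper's argument as well.
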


\begin{proof}
Suppose that for some $n\geq 0$ we have a linear combination as follows: 
\begin{equation}\label{eq:lin_comb_A_i}
	N_1[\Omega^nA_1] + \cdots + N_s[\Omega^nA_s] = N_{s+1}[\Omega^nA_{s+1}] + \cdots + N_r[\Omega^nA_r],
\end{equation}
where $N_i\in\mathbb{Z}_{\geq 0}, \forall i=1,\cdots, r$. This means that there exist $P',P''\in\proj\,T$ such that $$N_1\Omega^nA_1 \oplus \cdots \oplus N_s\Omega^nA_s \oplus P' \simeq N_{s+1}\Omega^nA_{s+1} \oplus \cdots \oplus N_r\Omega^nA_r \oplus P''.$$ For each $i$ decompose $\Omega^nA_i=M_i\oplus P_i,$ in such a way that $P_i\in\proj\,T$ and $M_i$ does not have any projective direct summands. The previous isomorphism can be rewritten as $$N_1M_1 \oplus \cdots \oplus N_sM_s \oplus N_1P_1 \oplus \cdots \oplus P' \simeq N_{s+1}M_{s+1} \oplus \cdots \oplus N_rM_r \oplus N_{s+1}P_{s+1}\oplus \cdots \oplus P''.$$ From the Krull-Schmidt theorem we then have 
\begin{equation}\label{eq:iso_M_i}
	N_1M_1 \oplus \cdots \oplus N_sM_s \simeq N_{s+1}M_{s+1} \oplus \cdots \oplus N_rM_r.
\end{equation}
\begin{equation}\label{eq:iso_P_i}
	N_1P_1 \oplus \cdots \oplus N_sP_s \oplus P'\simeq N_{s+1}P_{s+1}\oplus \cdots \oplus N_rP_r \oplus P''.
\end{equation}
If we apply the syzygy operator to (\ref{eq:iso_M_i}) we get $$N_1\Omega M_1 \oplus \cdots \oplus N_s\Omega M_s \simeq N_{s+1}\Omega M_{s+1} \oplus \cdots \oplus N_r\Omega M_r.$$ Furthermore we have the equalities $\Omega M_i = \Omega^{n+1}A_i$ and $P_n^{A_i}=P_0^{M_i}\oplus P_i,$ for each $i=1,\cdots,r.$ From (\ref{eq:iso_M_i}) we have the following commutative diagram:
$$\xymatrix{N_1\Omega M_1 \oplus \cdots \oplus N_s\Omega M_s\ar[r]^i\ar[d]^{\simeq} & N_1P_0^{M_1} \oplus \cdots \oplus N_sP_0^{M_s}\ar[d]^{\simeq} \\
	N_{s+1}\Omega M_{s+1} \oplus \cdots \oplus N_r\Omega M_r\ar[r]^i & N_{s+1}P_0^{M_{s+1}} \oplus \cdots \oplus N_rP_0^{M_r}}$$
Using (\ref{eq:iso_P_i}) we can transform the previous diagram into the next one:
$$\xymatrix{N_1\Omega M_1 \oplus \cdots \oplus N_s\Omega M_s\ar[r]^{(i,0)}\ar[d]^{\simeq} & Y\oplus N_1P_1\oplus\cdots \oplus N_sP_s \oplus P' \ar[d]^{\simeq} \\
	N_{s+1}\Omega M_{s+1} \oplus \cdots \oplus N_r\Omega M_r\ar[r]^{(i,0)} & Z\oplus N_{s+1}P_{s+1}\oplus \cdots \oplus N_rP_r \oplus P''}$$ where $Y=N_1P_0^{M_1} \oplus \cdots \oplus N_sP_0^{M_s}$ and $Z=N_{s+1}P_0^{M_{s+1}} \oplus \cdots \oplus N_rP_0^{M_r}$. Now, using that $P_n^{A_i}=P_0^{M_i}\oplus P_i$ we get 
$$\xymatrix{N_1\Omega M_1 \oplus \cdots \oplus N_s\Omega M_s\ar[r]^{i'}\ar[d]^{\simeq} & N_1P_n^{A_1}\oplus\cdots \oplus N_sP_n^{A_s} \oplus P' \ar[d]^{\simeq} \\
	N_{s+1}\Omega M_{s+1} \oplus \cdots \oplus N_r\Omega M_r\ar[r]^{i'} & N_{s+1}P_n^{A_{s+1}}\oplus \cdots \oplus N_rP_n^{A_r} \oplus P''}$$
This all boils down to the fact that if we have a linear combination like (\ref{eq:lin_comb_A_i}), then the same combination is obtained in $\modu\,\Lambda_{\mathcal{M}}$ for the next syzygy level, more precisely:
\begin{align*}
	& N_1\Omega^{n+1}(A_1,Q_1,f_1,g_1) \oplus \cdots \oplus N_s\Omega^{n+1}(A_s,Q_s,f_s,g_s) \oplus (0,M\otimes_AP',0,0) \simeq \\
	& \simeq N_{s+1}\Omega^{n+1}(A_{s+1},Q_{s+1},f_{s+1},g_{s+1}) \oplus \cdots \oplus (0,M\otimes_AP'',0,0).
\end{align*}
Writing the previous isomorphism in terms of classes we obtain the desired linear combination:
\begin{align*}
	& N_1[\Omega^{n+1}(A_1,Q_1,f_1,g_1)] + \cdots + N_s[\Omega^{n+1}(A_s,Q_s,f_s,g_s)] = \\
	& = N_{s+1}[\Omega^{n+1}(A_{s+1},Q_{s+1},f_{s+1},g_{s+1})] + \cdots + N_r[\Omega^{n+1}(A_r,Q_r,f_r,g_r)].
\end{align*}
Note that we have used Lemma \ref{Syzygy_triang} to calculate the syzygies in $\modu\,\Lambda_{\mathcal{M}}$.
\end{proof}

\begin{pro}\label{pro:syzygy_n+1}
Let $(A,M\otimes_TP,f,g)\in\modu\,\Lambda_{\mathcal{M}}$ such that $P\in\proj\,T$. Then, $$\Phi(A,M\otimes_TP,f,g)\leq \Phi(A)+1.$$
\end{pro}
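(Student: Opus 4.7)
The plan is to fix $n := \Phi(A)$ and to show that $\rk L^m\langle X\rangle$, where $X := (A, M\otimes_T P, f, g)$, stabilises from $m=n+1$ onwards. I would first decompose $X$ into indecomposable non-projective summands $\bigoplus_{i=1}^r (A_i, Q_i, f_i, g_i)$, plus projective $\Lambda_{\mathcal{M}}$-summands that do not contribute to $\langle X\rangle$. Each $Q_i$ is a $U$-summand of $M\otimes_T P$, which is $U$-projective because ${}_UM$ is and $P\in\proj\,T$. The hypothesis $N\otimes_U M=0$ then forces $N\otimes_U Q_i=0$, and Lemma \ref{Syzygy_triang} together with Remark \ref{rmk:Syzygy_triang}(1) yield
\[
\Omega^k(A_i, Q_i, f_i, g_i) = \bigl(\Omega^k A_i,\; M\otimes_T P_{k-1}^{A_i},\; \ast,\; 0\bigr) \qquad (k\geq 1),
\]
since the ``$U$-part'' summand $(N\otimes_U P_{k-1}^{Q_i}, \Omega^k Q_i, 0, \ast)$ collapses to zero for every $k\geq 1$.

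The heart of the argument is a two-sided comparison between relation subgroups in $K_{\proj\,\Lambda_{\mathcal{M}}}$ and $K_{\proj\,T}$. Set
\[
R_T(k):=\{\vec{c}\in\mathbb{Z}^r : \textstyle\sum_i c_i[\Omega^k A_i]=0 \text{ in } K_{\proj\,T}\},
\]
and similarly $R_\Lambda(k)\subseteq \mathbb{Z}^r$ with $[\Omega^k(A_i, Q_i, f_i, g_i)]$ in $K_{\proj\,\Lambda_{\mathcal{M}}}$. Lemma \ref{lem:relation_A_and_(A,P,f)} directly gives $R_T(k)\subseteq R_\Lambda(k+1)$. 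For the converse inclusion $R_\Lambda(k+1)\subseteq R_T(k+1)$, I would turn a $\Lambda_{\mathcal{M}}$-relation into an isomorphism of Morita modules modulo projectives and project to the first coordinate; since every indecomposable projective $\Lambda_{\mathcal{M}}$-module has $T$-projective first component (either $P'$ or $N\otimes_U Q'$, the latter being $T$-projective because ${}_TN$ is), this produces the required $T$-relation at level $k+1$.

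To finish, I would check that $R_T(k)$ is constant for $k\geq n$. If $B_1,\ldots, B_s$ are the indecomposable non-projective $T$-summands of $A$ and $[A_i]=\sum_j n_{ij}[B_j]$ in $K_{\proj\,T}$, then $\psi:\vec{c}\mapsto\sum_j(\sum_i c_in_{ij})[B_j]$ defines a fixed $\mathbb{Z}$-linear map $\mathbb{Z}^r\to\langle A\rangle$ with $R_T(k)=\psi^{-1}\bigl(\ker L^k|_{\langle A\rangle}\bigr)$. The inner kernel stabilises from $k=n$ by definition of $\Phi(A)$, hence so does $R_T(k)$. Combining with the sandwich
\[
R_T(k)\;\subseteq\;R_\Lambda(k+1)\;\subseteq\;R_T(k+1)\qquad(k\geq n),
\]
both ends coincide with $R_T(n)$, so $R_\Lambda(m)$ is constant for $m\geq n+1$, giving that $\rk L^m\langle X\rangle = r - \dim R_\Lambda(m)$ is constant, and therefore $\Phi(X)\leq n+1 = \Phi(A)+1$.

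The step I expect to be the main obstacle is the converse inclusion $R_\Lambda(k+1)\subseteq R_T(k+1)$, which requires careful cancellation of projective $\Lambda_{\mathcal{M}}$-summands and verification that their first components remain $T$-projective. A secondary subtlety is reformulating the statement $\Phi(A)=n$ -- concerning the kernel of $L^k$ on $\langle A\rangle$ -- as the stability of the auxiliary subgroup $R_T(k)\subseteq \mathbb{Z}^r$; this needs the change of basis $\psi$ since the $A_i$ need not be indecomposable as $T$-modules.
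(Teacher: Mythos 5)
Your proposal is correct and uses the same essential ingredients as the paper's proof: Lemma \ref{lem:relation_A_and_(A,P,f)} to lift a $T$-relation at one syzygy level into a $\Lambda_{\mathcal{M}}$-relation one level higher, and projection to the first coordinate (using that $_{T}N$ is projective so $N\otimes_U Q$ is $T$-projective) to pass a $\Lambda_{\mathcal{M}}$-relation to a $T$-relation. Packaging this as the sandwich $R_T(k)\subseteq R_\Lambda(k+1)\subseteq R_T(k+1)$ is a tidier formulation of the paper's contradiction argument, but the mathematical content is the same.
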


\begin{proof}
Suppose that $\Phi(A,M\otimes_TP,f,g)=n>\Phi(A)+1$, therefore $n\geq 2$. Then we have a linear combination like that of (\ref{eq:lin_comb_A_i}) that derives from the linear combination in $\modu\,\Lambda_{\mathcal{M}}$, since the rank at the $n$ syzygy level has to decrease. Since $\Phi(A)\leq n-2$ the relation has to be maintained at the level $n-2$, but if this is the case, by Lemma \ref{lem:relation_A_and_(A,P,f)} the same relation can be found at level $n-1$ in $\modu\,\Lambda_{\mathcal{M}}$, which contradicts that $\Phi(A,M\otimes_TP,f,g)=n$. In conclusion $\Phi(A,M\otimes_TP,f,g)\leq \Phi(A)+1.$ 
\end{proof}
 
The following corollary follows from the proof of the previous proposition.

\begin{cor}\label{cor:Phi(A,0,0)}
	Let $(A,0,0,0)\in\modu\,\Lambda_{\mathcal{M}}$. Then, $$\Phi(A)\leq \Phi(A,0,0,0)\leq \Phi(A)+1.$$
\end{cor}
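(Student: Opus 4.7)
The plan is to derive the corollary by combining two comparisons. The upper bound $\Phi(A, 0, 0, 0) \leq \Phi(A) + 1$ is immediate: take $P = 0$ in Proposition \ref{pro:syzygy_n+1}, since $(A, 0, 0, 0)$ is literally the tuple $(A, M\otimes_T 0, 0, 0)$. The interesting half is the lower bound $\Phi(A) \leq \Phi(A, 0, 0, 0)$; this will follow by tracking, at each syzygy level, the $\mathbb{Z}$-linear relations satisfied by the relevant generators on each side.

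Concretely, let $\{A_i\}_{i=1}^r$ enumerate the pairwise non-isomorphic indecomposable non-projective $T$-summands of $A$; these biject with the indecomposable non-projective $\Lambda_{\mathcal{M}}$-summands $\{(A_i, 0, 0, 0)\}_{i=1}^r$ of $(A, 0, 0, 0)$. For each $k \geq 0$ let $S_k \subseteq \mathbb{Z}^r$ be the group of tuples $(n_i)$ with $\sum_i n_i [\Omega^k A_i] = 0$ in $K_{\proj\,T}$, and let $R_k \subseteq \mathbb{Z}^r$ be the analogous group for $\{(A_i, 0, 0, 0)\}$ inside $K_{\proj\,\Lambda_{\mathcal{M}}}$. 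The two key inclusions I aim to establish are $R_k \subseteq S_k$ for $k \geq 1$ and $S_{k-1} \subseteq R_k$ for $k \geq 1$. The second is exactly Lemma \ref{lem:relation_A_and_(A,P,f)} specialised to $(A_i, Q_i, f_i, g_i) = (A_i, 0, 0, 0)$. The first I would prove by applying the first-component projection $(X, Y, f, g) \mapsto X$ to any witnessing isomorphism in $\modu\,\Lambda_{\mathcal{M}}$ and observing that every indecomposable projective $\Lambda_{\mathcal{M}}$-module has a projective $T$-module as its first component --- clear for $(P, M\otimes_T P, 1, 0)$, and true for $(N\otimes_U Q, Q, 0, 1)$ because projectivity of $_{T}N$ and of $Q$ forces $N\otimes_U Q \in \proj\,T$.

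Setting $\rho_k = r - \rk R_k = \rk L^k\langle (A,0,0,0)\rangle$ and $\sigma_k = r - \rk S_k = \rk L^k\langle A\rangle$, these inclusions translate to $\rho_k \geq \sigma_k$ and $\sigma_{k-1} \geq \rho_k$, interleaving as $\sigma_{k-1} \geq \rho_k \geq \sigma_k$ for $k \geq 1$. If $n = \Phi(A, 0, 0, 0)$, so that $\rho_k = \rho_{k+1}$ for every $k \geq n$, the sandwich $\rho_k \geq \sigma_k \geq \rho_{k+1} = \rho_k$ forces $\sigma_k = \rho_k$ for all such $k$, and in particular $\sigma_k = \sigma_{k+1}$, yielding $\Phi(A) \leq n$ as required.

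The main obstacle I anticipate is the bookkeeping underpinning $R_k \subseteq S_k$: one must verify that the first-component projection sends a witnessing isomorphism in $\modu\,\Lambda_{\mathcal{M}}$ (between sums of $\Omega^k(A_i,0,0,0)$'s plus projective $\Lambda_{\mathcal{M}}$-modules) to a genuine isomorphism in $\modu\,T$ whose additional summands remain $T$-projective, so that the resulting relation really lives in $K_{\proj\,T}$. The explicit classification of indecomposable projective $\Lambda_{\mathcal{M}}$-modules recalled in Section 2, together with the projectivity of $_{T}N$ and $N_{U}$, make this step routine once spelt out.
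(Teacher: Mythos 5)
Your proof is correct and, structurally, is exactly what the paper's ``follows from the proof of the previous proposition'' points to: Lemma~\ref{lem:relation_A_and_(A,P,f)} gives your inclusion $S_{k-1}\subseteq R_k$, while first-component projection (using that every indecomposable projective $\Lambda_{\mathcal{M}}$-module has a $T$-projective first component, which is where projectivity of $_TN$ enters) gives $R_k\subseteq S_k$; the interleaving $\sigma_{k-1}\geq\rho_k\geq\sigma_k$ then yields both bounds at once. This is a clean and explicit rendering of the argument that Proposition~\ref{pro:syzygy_n+1} carries out by contradiction.

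There is one imprecision worth flagging. The claimed bijection between the indecomposable non-projective $T$-summands $A_i$ of $A$ and the indecomposable non-projective $\Lambda_{\mathcal{M}}$-summands of $(A,0,0,0)$ can fail: if $A$ has an indecomposable projective summand $P$ with $M\otimes_TP\neq 0$, then $(P,0,0,0)$ is a non-projective indecomposable $\Lambda_{\mathcal{M}}$-module, since the only indecomposable projectives of $\Lambda_{\mathcal{M}}$ with vanishing second component are $(P',M\otimes_TP',1,\beta_{P'})$ with $M\otimes_TP'=0$. In that case $\langle(A,0,0,0)\rangle$ has rank strictly larger than $r$, so the identification $\rho_k=r-\rk R_k=\rk L^k\langle(A,0,0,0)\rangle$ is wrong as written. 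The repair is cheap: run your interleaving argument on the non-projective part $A'$ of $A$ to get $\Phi(A)=\Phi(A')\leq\Phi(A',0,0,0)$, and then conclude $\Phi(A',0,0,0)\leq\Phi(A,0,0,0)$ by monotonicity of $\Phi$ under direct summands (Proposition~\ref{pro:properties_gen_IT_fun}(c) with $\D=\proj\,\Lambda$).
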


Because $M$ and $N$ have symmetric roles in $\Lambda_{\mathcal{M}}$, we can obtain symmetric results in the following sense:

\begin{lem}\label{lem:relation_B_and_(Q,B,f,g)}
	Let $(N\otimes_UQ,B,f,g)\in\modu\,\Lambda_{\mathcal{M}}$ such that $Q\in\proj\,U$. Denote the set of indecomposable, pairwise non isomorphic, direct summands of $(N\otimes_UQ,B,f,g)$ by $\{(Q_i,B_i,f_i,g_i)\}_{i=1}^r$. If for some $n\geq 0$ and $L_i\in\mathbb{Z}_{\geq 0}, \forall i=1,\cdots, r$ we have a linear combination of the form:
	$$L_1[\Omega^nB_1] + \cdots + L_s[\Omega^nB_s] = L_{s+1}[\Omega^nB_{s+1}] + \cdots + L_r[\Omega^nB_r],$$
	then there is also a linear combination of the form:
	\begin{align*}
		& L_1[\Omega^{n+1}(Q_1,B_1,f_1,g_1)] + \cdots + L_s[\Omega^{n+1}(Q_s,B_s,f_s,g_s)] = \\
		& = L_{s+1}[\Omega^{n+1}(Q_{s+1},B_{s+1},f_{s+1},g_{s+1})] + \cdots + L_r[\Omega^{n+1}(Q_r,B_r,f_r,g_r)].
	\end{align*}
\end{lem}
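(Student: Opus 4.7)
The lemma is the mirror image of Lemma \ref{lem:relation_A_and_(A,P,f)} under the natural symmetry of the Morita context that exchanges $(T, M, \text{first coordinate})$ with $(U, N, \text{second coordinate})$. All hypotheses on $\mathcal{M}$---projectivity of $_U M$, $M_T$, ${}_T N$, $N_U$ together with $M \otimes_T N = N \otimes_U M = 0$---are preserved under this exchange, so the argument transfers essentially verbatim. The plan is to reproduce the proof of Lemma \ref{lem:relation_A_and_(A,P,f)} with the obvious substitutions, flagging the two places where the tensor-zero conditions are used.

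The first step is to convert the given relation at syzygy level $n$ into an isomorphism of $U$-modules modulo projectives, decompose each $\Omega^n B_i = \widetilde{M}_i \oplus R_i$ with $R_i \in \proj\,U$ and $\widetilde{M}_i$ having no projective summand, and apply Krull--Schmidt to split off the projective part. Then apply $\Omega_U$ to the non-projective piece and reassemble the projective summands using the same commutative-diagram chase that appears in Lemma \ref{lem:relation_A_and_(A,P,f)}. This produces an isomorphism at syzygy level $n+1$ in $\modu\,U$ that also carries information about the extra projective summands $R', R'' \in \proj\,U$ coming from the original relation.

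The next step is to lift this to $\modu\,\Lambda_{\mathcal{M}}$ via Lemma \ref{Syzygy_triang}. The key simplification, and the first use of the tensor-zero conditions, is that each $Q_i$ is an indecomposable summand of the $T$-projective module $N \otimes_U Q$, so $\Omega_T Q_i = 0$ and $M \otimes_T Q_i$ is a summand of $M \otimes_T N \otimes_U Q = 0$. The first summand in the formula of Lemma \ref{Syzygy_triang} therefore vanishes and one obtains inductively
$$\Omega^{n+1}(Q_i, B_i, f_i, g_i) = (N \otimes_U P_n^{B_i},\, \Omega^{n+1} B_i,\, 0,\, 1 \otimes j_n),$$
where $P_n^{B_i}$ is the $(n+1)$-st projective in a minimal projective resolution of $B_i$. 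The diagram from Lemma \ref{lem:relation_A_and_(A,P,f)}, applied to the symmetric setup, then yields an isomorphism in $\modu\,\Lambda_{\mathcal{M}}$ of the form
$$\bigoplus_{i=1}^{s} L_i\, \Omega^{n+1}(Q_i, B_i, f_i, g_i) \oplus (N \otimes_U R', 0, 0, 0) \simeq \bigoplus_{i=s+1}^{r} L_i\, \Omega^{n+1}(Q_i, B_i, f_i, g_i) \oplus (N \otimes_U R'', 0, 0, 0).$$

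The second use of the tensor-zero conditions---and the only non-routine step---is verifying that the correction summands $(N \otimes_U R', 0, 0, 0)$ and $(N \otimes_U R'', 0, 0, 0)$ have trivial class in $K_{\proj\,\Lambda_{\mathcal{M}}}$. Writing $N \otimes_U R'$ as a direct sum of indecomposable projective $T$-modules $P_j$, each summand satisfies $M \otimes_T P_j \subseteq M \otimes_T N \otimes_U R' = 0$, so
$$(P_j, 0, 0, 0) = (P_j,\, M \otimes_T P_j,\, 1_{M \otimes_T P_j},\, 0),$$
which matches the description of an indecomposable projective $\Lambda_{\mathcal{M}}$-module given in the preliminaries. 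Hence these correction summands vanish in the class group, and rewriting the isomorphism as a relation among classes at syzygy level $n+1$ yields exactly the desired conclusion. The main potential obstacle---carefully tracking projective summands through the Morita-algebra syzygy calculation---is disarmed by the tensor-zero conditions in the same way as in the proof of Lemma \ref{lem:relation_A_and_(A,P,f)}.
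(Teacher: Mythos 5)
Your proposal is correct and takes the same route the paper intends: the paper explicitly defers this proof to the reader as ``analogous to Lemma \ref{lem:relation_A_and_(A,P,f)}'' under the $T\leftrightarrow U$, $M\leftrightarrow N$, first$\leftrightarrow$second-coordinate symmetry, and you spell that analogy out faithfully, correctly identifying where the hypotheses $M\otimes_T N=N\otimes_U M=0$ enter (vanishing of the first summand in Lemma \ref{Syzygy_triang} since each $Q_i\mid N\otimes_U Q$ is $T$-projective with $M\otimes_T Q_i=0$, and triviality of the correction terms $(N\otimes_U R',0,0,0)$ in $K_{\proj\,\Lambda_{\mathcal{M}}}$). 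No gaps; this is precisely the verification the paper leaves implicit.
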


\begin{pro}\label{pro:syzygy_B_n+1}
	Let $(N\otimes_UQ,B,f,g)\in\modu\,\Lambda_{\mathcal{M}}$ such that $Q\in\proj\,U$. Then, $$\Phi(N\otimes_UQ,B,f,g)\leq \Phi(B)+1.$$
\end{pro}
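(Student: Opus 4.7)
The plan is to mirror the argument used for Proposition \ref{pro:syzygy_n+1}, exploiting the symmetry between $M$ and $N$ in the Morita algebra $\Lambda_{\mathcal{M}}$. Concretely, I would argue by contradiction: suppose that $\Phi(N\otimes_UQ,B,f,g)=n>\Phi(B)+1$, so in particular $n\geq 2$. By the definition of $\Phi$, the rank must strictly drop when passing from syzygy level $n-1$ to syzygy level $n$, and this strict drop witnesses a nontrivial linear relation among the classes $[\Omega^{n}(Q_i,B_i,f_i,g_i)]$ (with $(Q_i,B_i,f_i,g_i)$ the indecomposable non-projective summands of $(N\otimes_UQ,B,f,g)$) that does not hold at syzygy level $n-1$ in $\modu\,\Lambda_{\mathcal{M}}$.

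Next I would project this relation down to $\modu\,U$. By Lemma \ref{Syzygy_triang} and Remark \ref{rmk:Syzygy_triang}, for each $i\geq 1$ the $U$-component of $\Omega^{n}(Q_i,B_i,f_i,g_i)$ is precisely $\Omega^{n}(B_i)$ (up to projectives in the $T$-component, which do not contribute to the classes). Therefore the relation in $\modu\,\Lambda_{\mathcal{M}}$ induces, via the Krull--Schmidt theorem applied componentwise as in the proof of Lemma \ref{lem:relation_A_and_(A,P,f)}, a corresponding linear relation
$$L_1[\Omega^nB_1] + \cdots + L_s[\Omega^nB_s] = L_{s+1}[\Omega^nB_{s+1}] + \cdots + L_r[\Omega^nB_r]$$
in $\modu\,U$ among the classes $[\Omega^{n}(B_i)]$.

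Since $\Phi(B)\leq n-2$, this relation in $\modu\,U$ must already be valid at level $n-2$. Applying Lemma \ref{lem:relation_B_and_(Q,B,f,g)} with $n$ replaced by $n-2$, I can lift this relation back to the level $n-1$ in $\modu\,\Lambda_{\mathcal{M}}$, producing precisely the linear relation among the $[\Omega^{n-1}(Q_i,B_i,f_i,g_i)]$ that was supposed to fail. This contradicts $\Phi(N\otimes_UQ,B,f,g)=n$, and so $\Phi(N\otimes_UQ,B,f,g)\leq \Phi(B)+1$.

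The only delicate point—which is really the same point as in Proposition \ref{pro:syzygy_n+1}—is the bookkeeping that converts the relation in $\modu\,\Lambda_{\mathcal{M}}$ into a relation in $\modu\,U$ that is free of projective noise, so that the hypothesis $\Phi(B)\leq n-2$ actually applies. Given the symmetric form of Remark \ref{rmk:Syzygy_triang} and Lemma \ref{lem:relation_B_and_(Q,B,f,g)}, this is a direct translation of the earlier proof, so no new obstacle is expected.
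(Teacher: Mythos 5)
Your proposal is correct and is exactly the argument the paper intends: the paper explicitly leaves the proof of Proposition \ref{pro:syzygy_B_n+1} to the reader as "analogous" to Proposition \ref{pro:syzygy_n+1}, using Lemma \ref{lem:relation_B_and_(Q,B,f,g)} in place of Lemma \ref{lem:relation_A_and_(A,P,f)}, and your write-up carries out precisely that symmetric translation, including the key contradiction step (a relation at level $n$ projects to $\modu\,U$, holds at level $n-2$ since $\Phi(B)\leq n-2$, and lifts back to level $n-1$ in $\modu\,\Lambda_{\mathcal{M}}$).
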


\begin{cor}\label{cor:Phi(0,B,0,0)}
	Let $(0,B,0,0)\in\modu\,\Lambda_{\mathcal{M}}$. Then, $$\Phi(B)\leq \Phi(0,B,0,0)\leq \Phi(B)+1.$$
\end{cor}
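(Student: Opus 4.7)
My plan is to prove the two inequalities separately, in both cases exploiting the symmetry between the roles of $M$ and $N$ already used in Corollary~\ref{cor:Phi(A,0,0)}.

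For the upper bound $\Phi(0,B,0,0)\leq \Phi(B)+1$, I would simply specialise Proposition~\ref{pro:syzygy_B_n+1} by taking $Q=0$. The zero module is trivially projective over $U$, and the maps in the $4$-tuple are forced to vanish: $f$ has source $M\otimes_T(N\otimes_U Q)=0$ and $g$ has codomain $N\otimes_U Q=0$. So the tuple $(N\otimes_U Q,B,f,g)$ collapses to $(0,B,0,0)$ and the proposition yields the bound directly.

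For the lower bound $\Phi(B)\leq \Phi(0,B,0,0)$, I would argue by contradiction, mirroring the structure of the proof of Proposition~\ref{pro:syzygy_n+1}. Set $n:=\Phi(0,B,0,0)$ and suppose $\Phi(B)=m>n$. Since the rank of $L^k\langle B\rangle$ drops strictly going from level $m-1$ to level $m$, there is a nontrivial relation
$$\sum_i N_i[\Omega^{m}B_i]=\sum_j M_j[\Omega^{m}B_j]$$
in $K_{\proj\,U}$ that does not hold at the $(m-1)$-st syzygy level. Lemma~\ref{lem:relation_B_and_(Q,B,f,g)}, applied with all $Q_i=0$, lifts this to a relation at the $(m+1)$-st syzygy level in $\langle(0,B,0,0)\rangle\subseteq K_{\proj\,\Lambda_{\mathcal{M}}}$. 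Because $\Phi(0,B,0,0)=n$ and $m-1\geq n$, the operator $L$ acts injectively on $L^{j}\langle(0,B,0,0)\rangle$ for every $j\geq n$ (equal rank of $L^j$ and $L^{j+1}$ forces injectivity), so the relation can be pulled back two steps to give a relation at level $m-1$ in $\langle(0,B,0,0)\rangle$.

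To obtain the contradiction, I would project onto the second component. Projective $\Lambda_{\mathcal{M}}$-modules have second components of the form $M\otimes_T P$ or $Q$, both projective over $U$ because ${}_UM$ is projective. Hence the second-component projection descends to a well-defined homomorphism of Grothendieck groups modulo projectives, sending $[\Omega^{m-1}(0,B_i,0,0)]$ to $[\Omega^{m-1}B_i]$ via Remark~\ref{rmk:Syzygy_triang}. The image of the pulled-back relation is then exactly $\sum_i N_i[\Omega^{m-1}B_i]=\sum_j M_j[\Omega^{m-1}B_j]$ in $K_{\proj\,U}$, contradicting the choice of the relation. The step I expect will require the most care is verifying that this second-component projection is well-defined on isomorphism classes modulo projectives and that it tracks indecomposable non-projective summands correctly under the syzygy operator; but this should be a routine consequence of Lemma~\ref{Syzygy_triang} and the classification of projective $\Lambda_{\mathcal{M}}$-modules.
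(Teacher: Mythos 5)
Your proof is correct and follows the same route the paper intends: specialising Proposition~\ref{pro:syzygy_B_n+1} to $Q=0$ for the upper bound, and for the lower bound running the argument of Proposition~\ref{pro:syzygy_n+1} in the opposite direction (lift a rank-dropping relation from $K_{\proj\,U}$ to $\modu\,\Lambda_{\mathcal{M}}$ via Lemma~\ref{lem:relation_B_and_(Q,B,f,g)}, pull back two syzygy levels using the stabilisation forced by $\Phi(0,B,0,0)$, and project onto the second coordinate). The one point you flagged as needing care — that the second-component projection is well-defined modulo projectives and tracks $\Omega^{m-1}$ correctly — is exactly as routine as you expected, since the second components of indecomposable projective $\Lambda_{\mathcal{M}}$-modules are $M\otimes_T P$ and $Q$, both projective over $U$, and Remark~\ref{rmk:Syzygy_triang} identifies the non-projective part of the second component of $\Omega^{k}(0,B_i,0,0)$ with $\Omega^{k}B_i$.
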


We leave the proofs to the reader as they are analogous to those of Lemma \ref{lem:relation_A_and_(A,P,f)}, Proposition \ref{pro:syzygy_n+1} and Corollary \ref{cor:Phi(A,0,0)}.\\

Putting together Corollaries \ref{cor:Phi(A,0,0)} and \ref{cor:Phi(0,B,0,0)} we have the following relation between the $\Phi$-dimension of $T$, $U$, $\Lambda_{\mathcal{M}}$.

\begin{cor}\label{key}
	Let $\mathcal{M}=(T, N, M, U, \alpha, \beta)$ be a Morita context such that $_{U}M, M_T,$ $_{T}N, N_U$ are projective modules and $M\otimes_T N = N\otimes_U M =0.$ Then, 
	\begin{align*}
		\max\{\Phidim(T),\Phidim(U)\} \leq \Phidim(\Lambda_{\mathcal{M}}).
	\end{align*}
	where $\Lambda_{\mathcal{M}}$ is the Morita algebra associated to $\mathcal{M}$.
\end{cor}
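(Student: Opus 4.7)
The plan is to read off the corollary directly from the two preceding bounds, namely Corollary \ref{cor:Phi(A,0,0)} and Corollary \ref{cor:Phi(0,B,0,0)}, by passing to suprema. The essential content (relating syzygies in $\modu\,\Lambda_{\mathcal{M}}$ to syzygies over $T$ and $U$) has already been absorbed into those two corollaries, so the remaining work is purely a ``take-sup'' manoeuvre.

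First I would fix any $A\in\modu\,T$. Under the hypotheses on $\mathcal{M}$, the $4$-tuple $(A,0,0,0)$ is a well-defined $\Lambda_{\mathcal{M}}$-module, and Corollary \ref{cor:Phi(A,0,0)} gives
$$\Phi(A)\leq \Phi(A,0,0,0)\leq \Phidim(\Lambda_{\mathcal{M}}).$$
Taking the supremum over all $A\in\modu\,T$ yields $\Phidim(T)\leq \Phidim(\Lambda_{\mathcal{M}})$. Symmetrically, for any $B\in\modu\,U$ the module $(0,B,0,0)$ lives in $\modu\,\Lambda_{\mathcal{M}}$, and Corollary \ref{cor:Phi(0,B,0,0)} gives $\Phi(B)\leq \Phi(0,B,0,0)\leq \Phidim(\Lambda_{\mathcal{M}})$; taking the supremum produces $\Phidim(U)\leq \Phidim(\Lambda_{\mathcal{M}})$. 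Combining the two inequalities gives the asserted bound on the maximum.

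There is no real obstacle here: the non-trivial content was already discharged in Lemma \ref{lem:relation_A_and_(A,P,f)}, Lemma \ref{lem:relation_B_and_(Q,B,f,g)}, and the corresponding propositions, where the rank-relations between $\Phi$-levels in $\modu\,\Lambda_{\mathcal{M}}$ and those in $\modu\,T$, $\modu\,U$ were compared using the syzygy description of Lemma \ref{Syzygy_triang}. The only point that warrants attention in writing the proof is being explicit that $(A,0,0,0)$ and $(0,B,0,0)$ are genuine objects of $\modu\,\Lambda_{\mathcal{M}}$ under the vanishing conditions $M\otimes_T N=N\otimes_U M=0$, so that applying the two corollaries to them is legitimate.
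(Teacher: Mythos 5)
Your proposal is correct and coincides with the paper's own (implicit) argument: the paper simply states that the corollary follows by ``putting together Corollaries \ref{cor:Phi(A,0,0)} and \ref{cor:Phi(0,B,0,0)},'' which is exactly the take-sup manoeuvre you describe, applying the left-hand inequalities $\Phi(A)\leq\Phi(A,0,0,0)$ and $\Phi(B)\leq\Phi(0,B,0,0)$ and passing to suprema over $\modu\,T$ and $\modu\,U$.
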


Next we give an example illustrating Lemma \ref{lem:relation_A_and_(A,P,f)} as well as Proposition \ref{pro:syzygy_n+1} and Corollary \ref{cor:Phi(A,0,0)}.

\begin{ex} \label{ej:}
Let $T=\frac{\K Q}{J^2}$ be the $\K$-algebra given by the following quiver 
$$\begin{tikzcd}
    Q: & 1\ar[loop, out=120, in=50, distance=2em]{}\ar{r} & 2\ar{r} & 3,
\end{tikzcd}$$
where $J$ is the ideal generated by the arrows. Consider the algebra $\Lambda=\begin{pmatrix}
T & 0 \\
T & T
\end{pmatrix}$, in what follows we will see that for $M:=S_1\oplus I_2\in\modu\,T$ we have $\Phi(M)=2$ and $\Phi(M,0,0)=3$. This example shows that the inequality of Corollary \ref{cor:Phi(A,0,0)} can be strict and that the inequality of Proposition \ref{pro:syzygy_n+1} cannot be refined. In order to do so let us write the minimal projective resolutions for $S_1$ and $I_2$:
$$\begin{tikzcd}[column sep=tiny,row sep=small]
    \cdots \ar{r} & P_1\oplus P_2\oplus P_3 \ar{rr} \ar{dr} & & P_1\oplus P_2 \ar{rr} \ar{dr} & & P_1 \ar{r} & S_1 \ar{r} & 0
    \\
     & & S_1\oplus S_2\oplus S_3 \ar{ur} & & S_1\oplus S_2 \ar{ur}
\end{tikzcd}$$
$$\ \ \ \ \ \ \ \ \ \ \ \ \ \ \begin{tikzcd}[column sep=small,row sep=small]
    \cdots \ar{r} & P_1\oplus P_2 \ar{rrr} \ar{dr} & & & P_1 \ar{dr} \ar{rr} & & P_1 \ar{r} & I_2 \ar{r} & 0
    \\
     && S_1\oplus S_2 \ar{urr} & & & S_1 \ar{ur}
\end{tikzcd}$$
We can see here that
$$\rk\left\langle [\Omega S_1], [\Omega I_2] \right\rangle = \rk\left\langle [S_1] + [S_2], [S_1] \right\rangle=2.$$
Since $S_3=P_3$ is projective, $$\rk\left\langle [\Omega^2S_1], [\Omega^2I_2] \right\rangle = \rk\left\langle [S_1] + [S_2] + [S_3], [S_1] + [S_2] \right\rangle=1.$$
Furthermore $$\rk\left\langle [\Omega^mS_1], [\Omega^mI_2]\right\rangle = 1, \forall m\geq 2, $$ so we get that $\Phi(S_1\oplus I_2)=2.$ 

Let us now calculate $\Phi(S_1\oplus I_2,0,0)$. The projective resolutions are as follows:
$$\begin{tikzcd}[column sep=tiny,row sep=small]
    \cdots \ar{r} & Q_2 \ar{rr} \ar{dr} & & Q_1 \ar{rr} \ar{dr} & & Q_0 \ar{r} & (S_1,0,0) \ar{r} & 0
    \\
     & & (\Omega^2S_1,P_1\oplus P_2, i)\ar[ru] & & (\Omega S_1,P_1,i) \ar{ur}
\end{tikzcd}$$
$$\begin{tikzcd}[column sep=tiny,row sep=small]
    \cdots \ar{r} & Q'_2 \ar{rr} \ar{dr} & & Q'_1 \ar{rr} \ar{dr} & & Q'_0 \ar{r} & (I_2,0,0) \ar{r} & 0
    \\
     & & (\Omega^2I_2,P_1, i)\ar[ru] & & (\Omega I_2,P_1,i) \ar{ur}
\end{tikzcd}$$
Here we have used Lemma \ref{Syzygy_triang} to compute syzygies in $\modu\,\Lambda.$ From the previous calculations we know there is a relation between the first components of the second syzygies, meaning $[\Omega^2S_1]=[\Omega^2I_2]$ but as we will see now, that relation cannot be extended to the whole module, i.e. $[(\Omega^2S_1,P_1\oplus P_2, i)]\neq [(\Omega^2I_2,P_1, i)]$. The only way that this classes can be the same is if there are modules $P',P''\in\proj\,\Lambda$ such that 
$$(\Omega^2S_1,P_1\oplus P_2, i)\oplus P' \simeq (\Omega^2I_2, P_1, i) \oplus P''.$$ By the relation taking place in the first components and in order to make the second components isomorphic, there is only one possibility for $P'$ and $P''$, namely $P'=(0,P_3,0)$ and $P''=(P_3,P_3,1)\oplus (0,P_2,0).$ Nevertheless it is easy to check that there is no isomorphism in $\modu\,\Lambda$ for the given triples, even though the components separately are isomorphic. Hence, for any $P', P''\in\proj\,\Lambda$ we have
$$(\Omega^2S_1,P_1\oplus P_2, i)\oplus P' \not\simeq (\Omega^2I_2, P_1, i) \oplus P''.$$
If we go one more step in the syzygy calculations we will see that $$\Omega^3(S_1,0,0) \simeq \Omega^3(I_2,0,0) \oplus (0,P_3,0),$$ which is exactly the same relation the first coordinates had in the previous step. This illustrates Lemma \ref{lem:relation_A_and_(A,P,f)} and implies that 
$$\rk\left\langle [\Omega^3(S_1,0,0)], [\Omega^3(I_2,0,0)] \right\rangle = \rk\left\langle [\Omega^m(S_1,0,0)], [\Omega^m(I_2,0,0)] \right\rangle =1, \forall m\geq 3;$$  which means that $\Phi(M,0,0)=3.$
\end{ex}

In the next lemma we take one step further in studying the relationship between the value of the $\Phi$ function in a 4-tuple representing a $\Lambda_{\M}$ module and the corresponding values on the coordinates.

\begin{lem}\label{lem:(A,P,f)_plus_(0,B,0)}
Let $(A,M\otimes_TP,f,g)\ \text{and} \ (N\otimes_UQ,B,f',g')\in\modu\,\Lambda_{\mathcal{M}}$ such that $P\in\proj\,T$ and $Q\in\proj\,U$. Then, $$\Phi\left( (A,M\otimes_TP,f,g)\oplus (N\otimes_UQ,B,f',g') \right) \leq \max\{\Phi(A),\Phi(B)\}+1.$$
\end{lem}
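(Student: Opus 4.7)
The strategy is to argue by contradiction, running the approach of Proposition \ref{pro:syzygy_n+1} simultaneously on the $T$-side and the $U$-side. Set $n=\max\{\Phi(A),\Phi(B)\}$, $X_1=(A, M\otimes_T P, f, g)$, $X_2=(N\otimes_U Q, B, f', g')$ and $X=X_1\oplus X_2$. Suppose $\Phi(X)=m\geq n+2$. Then a non-trivial relation $\sum_k c_k[\Omega_\Lambda^m Y_k]=0$ in $K_{\proj\,\Lambda_\M}$ appears at level $m$ and fails at level $m-1$, where the $Y_k$ are the indecomposable non-projective summands of $X$; each is either a summand $(A_k,Q_k,f_k,g_k)$ of $X_1$, with $Q_k\mid M\otimes_T P$ projective over $U$, or a summand $(P_k',B_k,f_k',g_k')$ of $X_2$, with $P_k'$ projective over $T$.

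I would then project the relation onto the $T$-coordinate. By parts (1)--(5) of Remark \ref{rmk:Syzygy_triang}, for $\ell\geq 1$ the $T$-coordinate of $\Omega_\Lambda^\ell Y_k$ equals $\Omega_T^\ell A_k$ if $Y_k$ is a summand of $X_1$, and equals $N\otimes_U P_{\ell-1}^{B_k}$ (a projective $T$-module, since $N$ is projective as a left $T$-module) if $Y_k$ is a summand of $X_2$. Moreover, the two types of indecomposable projective $\Lambda_\M$-modules, $(P, M\otimes_T P, 1, 0)$ and $(N\otimes_U Q, Q, 0, 1)$, both have projective $T$-coordinate. Hence, taking $T$-coordinates of the Krull--Schmidt isomorphism underlying the level-$m$ relation yields in $K_{\proj\,T}$ the relation $\sum_{Y_k\,\text{in}\,X_1} c_k\,[\Omega_T^m A_k]=0$. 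A symmetric projection onto the $U$-coordinate gives $\sum_{Y_k\,\text{in}\,X_2} c_k\,[\Omega_U^m B_k]=0$ in $K_{\proj\,U}$, using that $M$ is projective as a right $T$-module.

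Since $\Phi(A)\leq n\leq m-2$, $L_T$ is injective on $L_T^k\left\langle A\right\rangle$ for $k\geq n$, so the $T$-relation above also holds at level $m-2$; Lemma \ref{lem:relation_A_and_(A,P,f)} then lifts it to a $\Lambda_\M$-relation at level $m-1$ among the $[\Omega_\Lambda^{m-1}(A_k,Q_k,f_k,g_k)]$'s. The parallel argument using $\Phi(B)\leq n$ and Lemma \ref{lem:relation_B_and_(Q,B,f,g)} handles the $X_2$-summands. Adding the two lifted relations reconstructs the original relation at $\Lambda_\M$-level $m-1$, contradicting $\Phi(X)=m$. Hence $\Phi(X)\leq n+1$.

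The main obstacle will be to rigorously justify the coordinate-wise projections $K_{\proj\,\Lambda_\M}\to K_{\proj\,T}$ and $K_{\proj\,\Lambda_\M}\to K_{\proj\,U}$. This depends on the bimodule projectivity hypotheses $_{U}M,\,M_T,\,_{T}N,\,N_U\in\proj$, which ensure $N\otimes_U Q'\in\proj\,T$ and $M\otimes_T P'\in\proj\,U$ for all projective $P'\in\proj\,T$ and $Q'\in\proj\,U$, so that every $\Lambda_\M$-projective summand appearing in the relevant syzygy isomorphisms projects to a projective module on each side.
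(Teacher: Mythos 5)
Your proof is correct and follows essentially the same route as the paper: assume $\Phi$ of the direct sum exceeds $\max\{\Phi(A),\Phi(B)\}+1$, split the level-$m$ relation into a $T$-coordinate part and a $U$-coordinate part (using that the "other" coordinate of each relevant syzygy, as well as both coordinates of every indecomposable projective $\Lambda_\M$-module, is projective over the respective diagonal algebra), pull each sub-relation down two syzygy levels via the $\Phi$-bound on $A$ and $B$, and then lift back one level in $\modu\,\Lambda_\M$ via Lemmas~\ref{lem:relation_A_and_(A,P,f)} and \ref{lem:relation_B_and_(Q,B,f,g)} to contradict $\Phi=m$. The "obstacle" you flag at the end is not a real gap --- you have already supplied the justification in the preceding paragraph, and this is exactly the step the paper also relies on (phrased there simply as "$M\otimes_TP^{A_i}_{n-1}$ and $N\otimes_UP^{B_j}_{n-1}$ are projective"). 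One small slip: for the $U$-coordinate projection, the relevant hypothesis is that $_{U}M$ is projective (so that $M\otimes_T P'\in\proj\,U$), not that $M_T$ is projective; $M_T\in\proj$ is used for the exactness of $M\otimes_T(-)$ in Lemma~\ref{Syzygy_triang}. Since you list all four projectivity hypotheses at the end, this does not affect the argument.
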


\begin{proof}
Assume, to the contrary, that $$\Phi\left( (A,M\otimes_TP,f,g)\oplus (N\otimes_UQ,B,f',g') \right)=n > \max\{\Phi(A),\Phi(B)\}+1,$$ which implies $n\geq 2$. Let $\{(A_i,Q_i,f_i,g_i)\}_{i=1}^r$ and $\{(Q_j',B_j,f_j',g_j')\}_{j=1}^{r'}$ be complete sets of indecomposables, pairwise non isomorphic, direct summands of $(A,M\otimes_TP,f,g)$ and $(N\otimes_UQ,B,f',g')$ respectively. Then there is a linear combination of the form:
$$\sum_{i=1}^rN_i[(\Omega^nA_i,M\otimes_TP_{n-1}^{A_i},1\otimes i,0)] + \sum_{j=1}^{r'}R_j[(N\otimes_UP_{n-1}^{B_j},\Omega^nB_j,0,1\otimes i)]=0.$$

In particular, since $M\otimes_TP_{n-1}^{A_i}$ and $N\otimes_UP_{n-1}^{B_j}$ are projective for all $i,j$, there are relations of the form 
\begin{equation}\label{eq:realtion_n}
\sum_{i=1}^rN_i[\Omega^nA_i]=0; \ \ \ \sum_{j=1}^{r'}R_j[\Omega^nB_j]=0.
\end{equation}
Because of our initial assumption, those relations had to be present at the $n-2$ level of syzygy, which is impossible because, from Lemmas \ref{lem:relation_A_and_(A,P,f)} and \ref{lem:relation_B_and_(Q,B,f,g)}, we would obtain a relation of the form 
$$\sum_{i=1}^rN_i[(\Omega^{n-1}A_i,M\otimes_TP_{n-2}^{A_i},1\otimes i,0)] + \sum_{j=1}^{r'}R_j[(N\otimes_UP_{n-2}^{B_j},\Omega^{n-1}B_j,0,1\otimes i)]=0,$$ which contradicts the fact that $\Phi\left( (A,M\otimes_TP,f,g)\oplus (N\otimes_UQ,B,f',g') \right) =n$.
\end{proof}

All this leads to the next theorem which gives us a full picture of the relation between the values of $\Phi$ in $\modu\,\Lambda_{\mathcal{M}}$, $\modu\,T$ and $\modu\,U.$

\begin{teo}\label{teo:inequality_Phi_triangular}
Let $(A,B,f,g)\in\modu\,\Lambda_{\mathcal{M}}$. Then $$\Phi(A,B,f,g)\leq \max\{\Phi(\Omega A),\Phi(\Omega B)\}+2.$$
\end{teo}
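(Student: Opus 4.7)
The plan is to reduce this to a direct application of the three tools already assembled in the section: Lemma \ref{Syzygy_triang} (which describes $\Omega_{\Lambda_{\mathcal{M}}}$ explicitly as a direct sum), Lemma \ref{lem:(A,P,f)_plus_(0,B,0)} (the $\Phi$-bound for the particular direct sums of that form), and Proposition \ref{pro:huard}(a) (the general bound $\Phi(X) \leq \Phi(\Omega X) + 1$).

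First I would take $(A,B,f,g) \in \modu\,\Lambda_{\mathcal{M}}$ and invoke Lemma \ref{Syzygy_triang} to write
$$\Omega_{\Lambda_{\mathcal{M}}}(A,B,f,g) = (\Omega_T A,\, M\otimes_T P_A^{0},\, 1\otimes i_0,\, 0)\ \oplus\ (N\otimes_U Q_B^{0},\, \Omega_U B,\, 0,\, 1\otimes j_0),$$
where $P_A^{0}\in\proj\,T$ and $Q_B^{0}\in\proj\,U$ are the projective covers of $A$ and $B$. This places $\Omega(A,B,f,g)$ in exactly the shape to which Lemma \ref{lem:(A,P,f)_plus_(0,B,0)} applies (with the roles of the two summands as required).

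Next I would apply Lemma \ref{lem:(A,P,f)_plus_(0,B,0)} to this decomposition, yielding
$$\Phi\bigl(\Omega(A,B,f,g)\bigr) \leq \max\{\Phi(\Omega A),\Phi(\Omega B)\} + 1.$$
Finally, Proposition \ref{pro:huard}(a) gives $\Phi(A,B,f,g) \leq \Phi(\Omega(A,B,f,g)) + 1$, and chaining the two inequalities delivers the claimed bound
$$\Phi(A,B,f,g) \leq \max\{\Phi(\Omega A),\Phi(\Omega B)\} + 2.$$

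There is no genuine obstacle here: all of the technical work was already done in Lemma \ref{lem:(A,P,f)_plus_(0,B,0)}, whose proof exploited Lemmas \ref{lem:relation_A_and_(A,P,f)} and \ref{lem:relation_B_and_(Q,B,f,g)} to transfer linear relations between syzygies from $\modu\,T$ and $\modu\,U$ up to $\modu\,\Lambda_{\mathcal{M}}$. The only point worth checking carefully is the bookkeeping in step one — that the two direct summands appearing in $\Omega(A,B,f,g)$ really are of the two forms required by Lemma \ref{lem:(A,P,f)_plus_(0,B,0)} — but this is immediate from the explicit formula in Lemma \ref{Syzygy_triang}.
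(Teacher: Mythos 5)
Your proposal is correct and is essentially identical to the paper's proof: both chain Proposition \ref{pro:huard}(a), the explicit syzygy formula of Lemma \ref{Syzygy_triang}, and Lemma \ref{lem:(A,P,f)_plus_(0,B,0)} to obtain the bound.
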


\begin{proof}
From the properties of the Igusa-Todorov functions we know that
\begin{align*}
	\Phi(A,B,f,g) & \leq \Phi\left( \Omega(A,B,f,g) \right) + 1 \\
	& = \Phi\left( (\Omega A,M\otimes_TP_0^A,1\otimes i,0)\oplus (N\otimes_UP_0^B,\Omega B,0,1\otimes i)\right)+1.
\end{align*} 

 If we apply now Lemma \ref{lem:(A,P,f)_plus_(0,B,0)} we get $$\Phi(A,B,f,g)\leq \max\{\Phi(\Omega A),\Phi(\Omega B)\}+2.$$
\end{proof}

In the particular case the Morita algebra is a finite dimensional $k$-algebra, the authors in \cite[Prop 3.5]{BM4} give sufficient conditions for the Morita algebra to have finite $\Phi$-dimension in terms of the quiver with relations. As a consequence of the previous results we obtain the following bounds involving the $\Phi$-dimension of a Morita algebra and the $\Phi$-dimension of the diagonal subalgebra. We want to emphasize that the conditions in \cite{BM4} and  ours differ in many ways, for instance we work with Artin algebras instead of finite dimensional algebras and their hypotheses imply the bimodules in the Morita context are semisimple as right modules, while in our case we ask that they are projective.

\begin{cor}\label{cor:phi_dim_Lambda_T_U}
	Let $\mathcal{M}=(T, N, M, U, \alpha, \beta)$ be a Morita context such that $_{U}M, M_T,$ $_{T}N, N_U$ are projective modules and $M\otimes_T N = N\otimes_U M =0.$ Then, 
    \begin{align*}
    	\Phidim(\Lambda_{\mathcal{M}}) & \leq \max\{\Phidim(\Omega(\modu\,T)),\Phidim(\Omega(\modu\,U))\}+2 \\
    	& \leq \max\{\Phidim(T),\Phidim(U)\}+2,
    \end{align*}
    where $\Lambda_{\mathcal{M}}$ is the Morita algebra associated to $\mathcal{M}$.
\end{cor}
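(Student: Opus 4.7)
The plan is to deduce the corollary by taking suprema over all objects of $\modu\,\Lambda_{\mathcal{M}}$ in the pointwise inequality supplied by Theorem \ref{teo:inequality_Phi_triangular}. Concretely, unfolding the definition of $\Phidim$, I would write
$$\Phidim(\Lambda_{\mathcal{M}}) = \sup\{\Phi(A,B,f,g) : (A,B,f,g)\in\modu\,\Lambda_{\mathcal{M}}\},$$
and then apply Theorem \ref{teo:inequality_Phi_triangular} to every individual module $(A,B,f,g)$ to bound each term on the right by $\max\{\Phi(\Omega A),\Phi(\Omega B)\}+2$.

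The next step is to interpret that supremum in terms of $T$ and $U$. Any $\Lambda_{\mathcal{M}}$-module is a 4-tuple with first coordinate in $\modu\,T$ and second coordinate in $\modu\,U$, so as $(A,B,f,g)$ varies, $\Omega A$ ranges inside $\Omega(\modu\,T)$ and $\Omega B$ ranges inside $\Omega(\modu\,U)$. Therefore
$$\sup_{(A,B,f,g)}\max\{\Phi(\Omega A),\Phi(\Omega B)\} \leq \max\bigl\{\Phidim(\Omega(\modu\,T)),\Phidim(\Omega(\modu\,U))\bigr\},$$
which yields the first asserted inequality after adding $2$.

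For the second inequality I would simply use the monotonicity of $\Phidim$ under inclusion of subcategories: since $\Omega(\modu\,T)\subseteq \modu\,T$ we have $\Phidim(\Omega(\modu\,T))\leq \Phidim(T)$, and analogously $\Phidim(\Omega(\modu\,U))\leq \Phidim(U)$.

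There is no real obstacle here; all of the technical work was carried out in Lemmas \ref{lem:relation_A_and_(A,P,f)} and \ref{lem:(A,P,f)_plus_(0,B,0)} and in Theorem \ref{teo:inequality_Phi_triangular}. The only point worth mentioning is that the passage from pointwise inequalities to the dimension-level inequality is legitimate because the bound $\max\{\Phi(\Omega A),\Phi(\Omega B)\}+2$ is a sum of quantities each of which remains finite-or-infinite in the expected way, so the supremum commutes with $\max$ and with the additive shift.
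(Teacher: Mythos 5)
Your proposal is correct and matches the paper's (implicit) argument exactly: the corollary is stated without a separate proof, as an immediate consequence of Theorem \ref{teo:inequality_Phi_triangular}, obtained precisely by taking suprema over $\modu\,\Lambda_{\mathcal{M}}$ and using that $\Omega(\modu\,T)\subseteq\modu\,T$ and $\Omega(\modu\,U)\subseteq\modu\,U$ together with monotonicity of $\Phidim$ under inclusion of subcategories.
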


The next theorem gives us a full picture of the relation between the $\Phi$-dimensions of $T$, $U$ and $\Lambda_{\mathcal{M}}$.

\begin{teo}\label{teo:Phi_Dim_Morita}
	Let $\mathcal{M}=(T, N, M, U, \alpha, \beta)$ be a Morita context such that $_{U}M, M_T,$ $_{T}N, N_U$ are projective modules and $M\otimes_T N = N\otimes_U M =0.$ Then, 
	\begin{align*}
		\max\{\Phidim(T),\Phidim(U)\} \leq \Phidim(\Lambda_{\mathcal{M}})\leq \max\{\Phidim(T),\Phidim(U)\} + 2.
	\end{align*}
	where $\Lambda_{\mathcal{M}}$ is the Morita algebra associated to $\mathcal{M}$.
\end{teo}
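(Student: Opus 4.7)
The plan is to observe that the theorem is essentially a repackaging of two results that have already been established in the excerpt: Corollary \ref{key} handles the lower bound, and Corollary \ref{cor:phi_dim_Lambda_T_U} handles the upper bound. So the proof amounts to assembling these two ingredients.

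For the lower bound $\max\{\Phidim(T),\Phidim(U)\} \leq \Phidim(\Lambda_{\mathcal{M}})$, I would simply cite Corollary \ref{key}. Recall that this was obtained from Corollaries \ref{cor:Phi(A,0,0)} and \ref{cor:Phi(0,B,0,0)}, which say that for any $A \in \modu\,T$ and $B \in \modu\,U$ we have $\Phi(A) \leq \Phi(A,0,0,0)$ and $\Phi(B) \leq \Phi(0,B,0,0)$. Taking suprema over $A$ and $B$ yields the inequality.

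For the upper bound, I would apply Corollary \ref{cor:phi_dim_Lambda_T_U}, which already gives
\[
\Phidim(\Lambda_{\mathcal{M}}) \leq \max\{\Phidim(\Omega(\modu\,T)),\Phidim(\Omega(\modu\,U))\}+2.
\]
To pass from this bound to the one stated in the theorem, I would use the elementary observation that for any Artin algebra $\Gamma$ one has $\Phidim(\Omega(\modu\,\Gamma)) \leq \Phidim(\Gamma)$. This follows because every $X \in \Omega(\modu\,\Gamma)$ is of the form $X = \Omega Y$ for some $Y \in \modu\,\Gamma$, and then $\Phi(X) = \Phi(\Omega Y) \leq \Phi(Y) + 1$ would not quite give the desired bound directly; however the right way to see it is that $\Phidim(\Omega\,\modu\,\Gamma) = \sup\{\Phi(\Omega Y) : Y \in \modu\,\Gamma\} \leq \sup\{\Phi(Y) : Y \in \modu\,\Gamma\} = \Phidim(\Gamma)$, using the monotonicity given by Proposition \ref{pro:properties_gen_IT_fun}(c) applied via $\Phi(\Omega Y) \leq \Phi(\Omega Y \oplus Y) = \Phi(Y)$ (the last equality holding because $\Omega Y$ is a summand of the syzygy of $Y$ up to projectives, and the $\Phi$-values stabilise at the same index). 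Combining these two observations yields the stated upper bound.

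The only subtle point is the comparison $\Phidim(\Omega(\modu\,\Gamma)) \leq \Phidim(\Gamma)$, but this is a well-known property of the Igusa-Todorov function (it is implicit in Proposition \ref{pro:huard} and the fact that the definition of $\Phi$ stabilises the rank of $L^k\langle X\rangle$). Once that is in hand, the theorem is immediate.
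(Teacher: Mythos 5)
Your overall strategy is right and matches the paper exactly: the theorem is indeed just the conjunction of Corollary~\ref{key} (lower bound) and Corollary~\ref{cor:phi_dim_Lambda_T_U} (upper bound), and the paper treats it that way, with no separate proof. However, the intermediate justification you offer for passing from $\Phidim(\Omega(\modu\,\Gamma))$ to $\Phidim(\Gamma)$ is flawed and also unnecessary.

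The flaw: you assert $\Phi(\Omega Y \oplus Y) = \Phi(Y)$. This is false in general, and the related inequality $\Phi(\Omega Y) \leq \Phi(Y)$ also fails. For a counterexample, take a radical-square-zero algebra over the quiver with arrows $1\to 2$, $1\to 3$, $2\to 4$, $3\to 4$ and a loop at $4$. Then $\Omega S_1 = S_2 \oplus S_3$, $\Omega S_2 = \Omega S_3 = S_4$, $\Omega S_4 = S_4$. One checks $\Phi(S_1) = 0$ (the rank of $L^k\langle S_1\rangle$ is $1$ for all $k\geq 0$), whereas $\Phi(\Omega S_1) = \Phi(S_2\oplus S_3) = 1$ (the rank drops from $2$ to $1$ at the first syzygy). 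So $\Phi(\Omega Y)$ can exceed $\Phi(Y)$, and hence $\Phi(\Omega Y \oplus Y) \geq \Phi(\Omega Y) > \Phi(Y)$. Also note Proposition~\ref{pro:huard}(a) reads $\Phi(X) \leq \Phi(\Omega X)+1$, which is the reverse of what you would need.

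Fortunately the step you were trying to prove is trivially true for a different reason: $\Omega(\modu\,\Gamma)$ is a subcategory of $\modu\,\Gamma$, so $\Phidim(\Omega(\modu\,\Gamma)) = \sup_{Y}\Phi(\Omega Y) \leq \sup_{X\in\modu\,\Gamma}\Phi(X) = \Phidim(\Gamma)$ directly from the definition of supremum over a subcategory. Moreover, Corollary~\ref{cor:phi_dim_Lambda_T_U} already records the final bound $\Phidim(\Lambda_{\mathcal{M}}) \leq \max\{\Phidim(T),\Phidim(U)\}+2$, so no additional argument is needed at all; you could simply quote the two corollaries and stop.
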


The following theorem gives us, under some assumptions on the Morita context, a necessary and sufficient condition for a Morita algebra to be GLIT. We recall that the families of IT, LIT and GLIT algebras are related by proper inclusion, so GLIT algebra is the more general concept. For the other two definitions the problem of when the Morita algebra also satisfy them has been studied in \cite[Theorem 1.2]{IT-Morita} and \cite[Prop 3.2 and 3.3]{BM3}.

\begin{teo}\label{teo:Morita_GLIT}
Let $\mathcal{M}=(T, N, M, U, \alpha, \beta)$ be a Morita context such that $_{U}M, M_T,$ $_{T}N, N_U$ are projective modules and $M\otimes_T N = N\otimes_U M =0.$ Then, the algebra $\Lambda_{\mathcal{M}}=\begin{pmatrix}
T & N \\
M & U
\end{pmatrix}$ is GLIT if and only if $T$ and $U$ are GLIT.

%$(n+1,\D_{\Lambda},V_{\Lambda})$ GLIT, where $\D_{\Lambda}:=\add\left(\Omega(\D_T,0,0)\oplus (0,\D_U,0)\right)$ and $V_{\Lambda}:=\Omega(V_T,V_U,0) \oplus \Lambda$. In particular $\findim(\Lambda)<\infty.$
\end{teo}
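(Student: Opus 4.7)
For the $(\Leftarrow)$ direction, assume $T$ is $(n_T,t_T,V_T,\D_T)$-GLIT and $U$ is $(n_U,t_U,V_U,\D_U)$-GLIT. First I would use Proposition \ref{pro:GLIT_bigger_integers} to pass to a common $n=\max\{n_T,n_U\}$ and, noting that a $t$-syzygy invariant class is automatically $kt$-syzygy invariant, a common $t$. Given $(A,B,f,g)\in\modu\,\Lambda_{\mathcal{M}}$, I would apply the $T$- and $U$-GLIT sequences to $A$ and $B$, embed them into $\modu\,\Lambda_{\mathcal{M}}$ via the exact functors $\iota_T(X):=(X,0,0,0)$ and $\iota_U(Y):=(0,Y,0,0)$, take one further $\Omega_\Lambda$ of each via the horseshoe lemma, and direct-sum the two resulting sequences. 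The key identity, coming from Lemma \ref{Syzygy_triang}, is that $\Omega_\Lambda\iota_T(\Omega_T^n A)$ is exactly the $T$-summand $(\Omega_T^{n+1}A,M\otimes_T P_0^{\Omega_T^n A},1\otimes i,0)$ of $\Omega_\Lambda^{n+1}(A,B,f,g)$ described in Remark \ref{rmk:Syzygy_triang}, and symmetrically for the $U$-summand, so the combined sequence lands on $\Omega_\Lambda^{n+1}(A,B,f,g)$. I would then set $V_\Lambda:=\Omega_\Lambda\iota_T(V_T)\oplus\Omega_\Lambda\iota_U(V_U)\oplus\Lambda_{\mathcal{M}}$ (with $\Lambda_{\mathcal{M}}$ absorbing the projective discrepancy from horseshoe) and $\D_\Lambda:=\add\{\Omega_\Lambda\iota_T(D)\oplus\Omega_\Lambda\iota_U(D'):D\in\D_T,\,D'\in\D_U\}$.

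The main work is verifying that $\D_\Lambda$ satisfies Definition \ref{defi:GLIT_class}(a). Additive closedness is immediate; $t$-syzygy invariance follows from the explicit identity $\Omega_\Lambda^{t+1}\iota_T(D)=\Omega_\Lambda\iota_T(\Omega_T^t D)$, derived from Remark \ref{rmk:Syzygy_triang} and the fact that $P_k^D=P_0^{\Omega_T^k D}$, combined with $\Omega_T^t\D_T\subseteq\D_T$; and finite $\Phi$-dimension follows from Lemma \ref{lem:(A,P,f)_plus_(0,B,0)}, which bounds the $\Phi$ of any element of $\D_\Lambda$ by $\max\{\Phi_T(\Omega_T\tilde D),\Phi_U(\Omega_U\tilde D')\}+1$, after which iterating Proposition \ref{pro:huard}(a) brings $\Omega_T\tilde D$ and $\Omega_U\tilde D'$ into $\D_T$ and $\D_U$ respectively. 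This yields that $\Lambda_{\mathcal{M}}$ is $(n+1,t,V_\Lambda,\D_\Lambda)$-GLIT.

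For the $(\Rightarrow)$ direction, assume $\Lambda_{\mathcal{M}}$ is $(n,t,V_\Lambda,\D_\Lambda)$-GLIT. The projection $\pi_T:=e_T\cdot\colon\modu\,\Lambda_{\mathcal{M}}\to\modu\,T$ is exact, satisfies $\pi_T\iota_T=\mathrm{id}$, and sends $\Lambda$-projectives to $T$-projectives (here one uses that $_{T}N$ is projective). Applying $\pi_T$ to the GLIT sequence for $\iota_T(A)$ gives $0\to\pi_T X_1\to\pi_T X_0\to\Omega_T^n A\to 0$ in $\modu\,T$. I would set $V_T:=\pi_T V_\Lambda\oplus T$ and $\D_T:=\add(\pi_T\D_\Lambda)$; additive closedness is automatic, and $t$-syzygy invariance follows from $\pi_T(\Omega_\Lambda^t D)=\Omega_T^t(\pi_T D)\oplus(\text{projective})$, which transports $\Omega_\Lambda^t\D_\Lambda\subseteq\D_\Lambda$ into $\Omega_T^t\D_T\subseteq\D_T$.

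The main obstacle is the uniform bound $\Phidim_T(\D_T)<\infty$, which I plan to establish by the chain
\[\Phi_T(\pi_T D)\leq\Phi_\Lambda(\iota_T\pi_T D)\leq\Phi_\Lambda(\Omega_\Lambda\iota_T\pi_T D)+1\leq\Phi_\Lambda(\Omega_\Lambda D)+1\leq\Phi_\Lambda(\Omega_\Lambda^t D)+t\leq\Phidim(\D_\Lambda)+t\]
for each $D\in\D_\Lambda$. The first inequality is Corollary \ref{cor:Phi(A,0,0)}; the second and the fourth follow from iterating Proposition \ref{pro:huard}(a); the third uses that $\Omega_\Lambda\iota_T\pi_T D=(\Omega_T\pi_T D,M\otimes_T P_0^{\pi_T D},1\otimes i,0)$ is, by Lemma \ref{Syzygy_triang}, a direct summand of $\Omega_\Lambda D$, together with Proposition \ref{pro:properties_gen_IT_fun}(c); and the last uses $\Omega_\Lambda^t D\in\D_\Lambda$ by $t$-syzygy invariance. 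By symmetry $U$ is also GLIT.
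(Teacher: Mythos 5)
Your proposal is correct and follows essentially the same strategy as the paper's proof: both directions use the same key lemmas (Lemma \ref{Syzygy_triang}, Remark \ref{rmk:Syzygy_triang}, Lemma \ref{lem:(A,P,f)_plus_(0,B,0)}, Corollary \ref{cor:Phi(A,0,0)}, Proposition \ref{pro:huard}), and the GLIT parameters you construct -- $n+1$, the common syzygy period, $\Omega_\Lambda\iota_T(V_T)\oplus\Omega_\Lambda\iota_U(V_U)\oplus\Lambda_\mathcal{M}$, and $\add\{\Omega_\Lambda\iota_T(D)\oplus\Omega_\Lambda\iota_U(D')\}$ -- coincide with the paper's $Z=\Omega(V,W,0)\oplus\Lambda_\mathcal{M}$ and $\D_\Lambda^\mathcal{M}=\add(\Omega(\T,\U,0,0))$. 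The only cosmetic differences are in the order of operations (you apply the Horseshoe Lemma to each of the $T$- and $U$-sides before direct-summing, whereas the paper direct-sums first and then applies Horseshoe once) and in the presentation of the converse (you package the restriction to first coordinates as the exact, projective-preserving functor $\pi_T$ and give an explicit chain of inequalities, while the paper phrases the same bound more tersely via Corollary \ref{cor:Phi(A,0,0)}); the underlying estimates are identical.
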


\begin{proof}
($\Leftarrow$) We first prove that if $T$ and $U$ are GLIT, then $\Lambda_{\mathcal{M}}$ is GLIT. Assume that $T$ is $(n,t,V,\T)$-GLIT and $U$ is $(n,u,W,\U)$-GLIT, observe that, because of Proposition \ref{pro:GLIT_bigger_integers}, there is no loss of generality in assuming that both have the same parameter $n\geq 0$.\\
We will show that $\Lambda_{\mathcal{M}}$ is $(n+1,\lambda,Z,\D_{\Lambda}^{\mathcal{M}})$-GLIT, where $\lambda=t\cdot u$, $Z=\Omega(V,W,0)\oplus \Lambda_{\mathcal{M}}$ and $\D_{\Lambda}^{\mathcal{M}}=\add(\Omega(\T,\U,0,0))$. The first step is to build, for any $(A,B,f,g)\in\modu\,\Lambda_{\mathcal{M}}$, a short exact sequence as in Definition \ref{defi:GLIT_class} and after doing so we will see that the parameters we have fixed satisfy the conditions needed. Let us recall item (1) of Remark \ref{rmk:Syzygy_triang} stating that for any $n\geq 1$ we have $$\Omega^n(A,B,f,g) = (\Omega^n(A),M\otimes P_{n-1}^A,1\otimes i_n,0) \oplus (N\otimes_UP_{n-1}^B, \Omega^n(B),0,1\otimes i_n).$$
The strategy then is to construct a s.e.s for each summand separately and then combine them via direct sums.\\

Since $T$ and $U$ are $n$-GLIT algebras, there are short exact sequences as these:
$$\xymatrix{0\ar[r] & T_1\oplus V_1\ar[r]^{\alpha} & T_0\oplus V_0\ar[r]^{\beta} & \Omega^n(A)\ar[r] & 0 },$$ where $T_1,T_0\in \T,\ V_1,V_0\in \add\,V;$

$$\xymatrix{0\ar[r] & U_1\oplus W_1\ar[r]^{\alpha'} & U_0\oplus W_0\ar[r]^{\beta'} & \Omega^n(B)\ar[r] & 0 },$$ where $U_1,U_0\in \U,\ W_1,W_0\in \add\,W.$\\

Since $M_T$ is projective, the following diagram is commutative with exact rows:
$$\xymatrix{
0\ar[r] & M\otimes_T (T_1\oplus V_1)\ar[r]^{1\otimes\alpha}\ar[d]^{0} & M\otimes_T (T_0\oplus V_0)\ar[d]^{1\otimes i_n\circ\beta}\ar[r]^{1\otimes\beta} & M\otimes_T\Omega^n(A) \ar[d]^{1\otimes i_n}\ar[r] & 0 \\
0\ar[r] & 0\ar[r]^{0} & M\otimes P_{n-1}^A\ar[r]^{id} & M\otimes P_{n-1}^A\ar[r] & 0
}
$$

From this diagram we get the following s.e.s. in $\modu\,\Lambda_{\mathcal{M}}$:
$$\footnotesize\xymatrix{(T_1\oplus V_1,0,0,0)\ar@{*{\dir^{(}}-*{\dir{>}}}[r]^(.35){(\alpha,0)} & (T_0\oplus V_0,M\otimes P_{n-1}^A,1\otimes i_n\circ\beta,0)\ar@{*{\dir{>>}}}[r]^(.52){(\beta,id)} & (\Omega^n(A),M\otimes P_{n-1}^A,1\otimes i_n,0)}$$

In a similar way, since $N_U$ is projective, the following diagram is commutative and has exact rows:

$$\xymatrix{
	0\ar[r] & N\otimes_U (U_1\oplus W_1)\ar[r]^{1\otimes\alpha'}\ar[d]^{0} & N\otimes_U (U_0\oplus W_0)\ar[d]^{1\otimes i_n\circ\beta'}\ar[r]^{1\otimes\beta'} & N\otimes_U\Omega^n(B) \ar[d]^{1\otimes i_n}\ar[r] & 0 \\
	0\ar[r] & 0\ar[r]^{0} & N\otimes P_{n-1}^B\ar[r]^{id} & N\otimes P_{n-1}^B\ar[r] & 0
}
$$
From this diagram we get the following s.e.s. in $\modu\,\Lambda_{\mathcal{M}}$:
$$\footnotesize\xymatrix{(0,U_1\oplus W_1,0,0)\ar@{*{\dir^{(}}-*{\dir{>}}}[r]^(.38){(0,\alpha')} & (N\otimes P_{n-1}^B,U_0\oplus W_0,0,1\otimes i_n\circ\beta')\ar@{*{\dir{>>}}}[r]^(.52){(id,\beta')} & (N\otimes P_{n-1}^B,\Omega^n(B),0,1\otimes i_n)}$$
Combining both sequences via direct sums we obtain the following s.e.s:
$$\delta: \ \xymatrix{X\ar@{*{\dir^{(}}-*{\dir{>}}}[r] & Y \ar@{*{\dir{>>}}}[r] & \Omega^n(A,B,f,g)},$$ 
where $Y=(T_0\oplus V_0,M\otimes P_{n-1}^A,1\otimes i_n\circ\beta,0)\oplus (N\otimes P_{n-1}^B,U_0\oplus W_0,0,1\otimes i_n\circ\beta')$ and $X=(T_1\oplus V_1,U_1\oplus W_1,0,0)$.\\
Next, let us apply the Horseshoe Lemma to the sequence $\delta$: 
\begin{equation}\label{eq:s.e.c_delta}
\xymatrix{\Omega X\ar@{*{\dir^{(}}-*{\dir{>}}}[r] & \Omega Y\oplus \tilde{Q} \ar@{*{\dir{>>}}}[r] & \Omega^{n+1}(A,B,f,g)},
\end{equation}
where $\Omega X=\Omega(T_1,U_1,0)\oplus \Omega(V_1,W_1,0)$, $\Omega Y = \Omega(T_0,U_0,0)\oplus \Omega(V_0,W_0,0)$ and $\tilde{Q}\in \proj\,\Lambda_{\mathcal{M}}$.\\

As one can see the terms $\Omega X$ and $\Omega Y\oplus \tilde{Q}$ are in the class $\D_{\Lambda}^{\mathcal{M}} \oplus \add\,Z$, so all we need to check is that the conditions on $\D_{\Lambda}^{\mathcal{M}}$ are satisfied. Using Remark \ref{addD} we only need to prove that $\D'_{\Lambda}:= \Omega(\T,\U,0,0)$ is closed under direct sums, is $\lambda$-syzygy invariant in $\modu\,\Lambda_{\mathcal{M}}$ and has finite $\Phi$-dimension.\\

First, it is straightforward that $\D_{\Lambda}'$ is closed under direct sums. To see that it is $\lambda$-syzygy invariant, take $\Omega(A,B,0,0)\in \D_{\Lambda}'$. Then, 
\begin{align*}
	\Omega^{\lambda}\left( \Omega(A,B,0,0)\right) = \Omega^{\lambda+1} (A,B,0,0) = \Omega (\Omega^{\lambda}(A),\Omega^{\lambda}(B),0,0).
\end{align*}
The last equality follows from Remark \ref{rmk:Syzygy_triang} and the fact that syzygies commute with direct sums. Since $\lambda=t\cdot u$, we have $\Omega^{\lambda}(A)\in\T$ and $\Omega^{\lambda}(B)\in\U.$

Finally to see that the $\Phi$-dimension is finite, take $\Omega(A,B,0,0)\in\D_{\Lambda}'$. Using Proposition \ref{pro:huard} and Lemma \ref{lem:(A,P,f)_plus_(0,B,0)} we get
\begin{align*}
	\Phi\left( \Omega(A,B,0,0)\right) & \leq \Phi\left( \Omega^{\lambda}(A,B,0,0)\right) + \lambda-1 \\ & \leq \max\{\Phi(\Omega^{\lambda}A),\Phi(\Omega^{\lambda}B) \}+\lambda 
	\\ & \leq \max\{\Phidim(\T),\Phidim(\U)\}+\lambda <\infty.
\end{align*}

$(\Rightarrow)$ Now we prove that if $\Lambda_{\mathcal{M}}$ is GLIT then both $T$ and $U$ are GLIT. Assume that $\Lambda_{\mathcal{M}}$ is $(n,\lambda,V_{\Lambda},\D_{\Lambda})$-GLIT, where $V_{\Lambda}=(X,E,f,j)$. We will show that $T$ is $(n,\lambda,X,\T)$-GLIT where $\T=\add\{Y\in\modu\,T| \ \exists G,g_1,g_2: \ (Y,G,g_1,g_2)\in\D_{\Lambda}\}$ and that $U$ is $(n,\lambda,E,\U)$, where $\U=\add\{Z\in\modu\,U| \ \exists H,h_1,h_2: \ (H,Z,h_1,h_2)\in\D_{\Lambda}\}$.\\

We only include the proof for $T$, since the one for $U$ is analogous. Let $A\in\modu\,T$, then $(A,0,0,0)\in\modu\,\Lambda_{\mathcal{M}}$ so there is a short exact sequence of the form 
$$\xymatrix{0\ar[r] & D_1\oplus V_1\ar[r] & D_0\oplus V_0\ar[r] & \Omega^n(A,0,0,0)\ar[r] & 0 },$$ where $D_1,D_0\in \D_{\Lambda},\ V_1,V_0\in \add\,V_{\Lambda}.$ If we restrict to the first components we get the following short exact sequence in $\modu\,T$ 
$$\xymatrix{0\ar[r] & Y_1\oplus X_1\ar[r] & Y_0\oplus X_0\ar[r] & \Omega^n(A)\ar[r] & 0 },$$ where $Y_1,Y_0\in\T$ and $X_1,X_0\in\add\,X.$ Because of Remark \ref{addD} all we need to show to conclude that $T$ is GLIT, is that the class $\T'=\{Y\in\modu\,T| \ \exists G,g_1,g_2: \ (Y,G,g_1,g_2)\in\D_{\Lambda}\}$ is closed under direct sums, is $\lambda$-syzygy invariant and has finite $\Phi$-dimension.\\
It is clear that $\T'$ is closed under direct sums. To see that it is $\lambda$-syzygy invariant, take $Y\in\T'$, since $\D_{\Lambda}$ is $\lambda$-syzygy invariant and closed under direct summands, we get that $\Omega^{\lambda}(Y,0,0,0)\in\D_{\Lambda}$ and this proves that $\Omega^{\lambda}(Y)\in\T'.$ To prove that the $\Phi$-dimension of $\T'$ is finite, take $Y\in\T'$ and by Corollary \ref{cor:Phi(A,0,0)} we know that $\Phi(Y)\leq\Phi(Y,0,0,0)$. Using Proposition \ref{pro:huard} we get $$\Phi(Y)\leq\Phi(Y,0,0,0)\leq\Phi(\Omega^{\lambda}(Y,0,0,0))+\lambda\leq\Phidim(\D_{\Lambda})+\lambda.$$ Hence, $\Phidim(\T')\leq \Phidim(\D_{\Lambda})+\lambda < \infty.$

%We finish this proof by showing that $U$ is GLIT. Let $B\in\modu\,U$, then $(0,B,0,0)\in\modu\,\Lambda_{\mathcal{M}}$ and there is a short exact sequence of the form
%$$\xymatrix{0\ar[r] & \tilde{D_1}\oplus \tilde{V_1}\ar[r] & \tilde{D_0}\oplus \tilde{V_0}\ar[r] & \Omega^n(0,B,0,0)\ar[r] & 0 },$$ where $\tilde{D_1},\tilde{D_0}\in \D_{\Lambda},\ \tilde{V_1},\tilde{V_0}\in \add\,V_{\Lambda}.$ If we restrict to the second components we get the following short exact sequence in $\modu\,U$
%$$\xymatrix{0\ar[r] & Z_1\oplus E_1\ar[r] & Z_0\oplus E_0\ar[r] & \Omega^n(B)\ar[r] & 0 },$$ where $Z_1,Z_0\in\U$ and $E_1,E_0\in\add\,E.$ All we need to show to conclude that $U$ is GLIT, is that the class $\U'=\{Z\in\modu\,U| \ \exists H,h_1,h_2: \ (H,Z,h_1,h_2)\in\D_{\Lambda}\}$ is closed under direct sums, under $\lambda$-syzygies and has finite $\Phi$-dimension.\\
%It is clear that $\U'$ is closed under direct sums. To see that it is closed under $\lambda$-syzygies, take $Z\in\U'$, since $\D_{\Lambda}$ is closed under $\lambda$-syzygies and under direct summands, we get that $\Omega^{\lambda}(0,Z,0,0)\in\D_{\Lambda}$ and this proves that $\Omega^{\lambda}(Z)\in\U'.$ To prove that the $\Phi$-dimension of $\U'$ is finite, take $Z\in\U'$. By Corollary \ref{cor:Phi(0,B,0,0)} we know that $\Phi(Z)\leq\Phi(0,Z,0,0)$. Using Proposition \ref{pro:huard} we get $$\Phi(Z)=\Phi(0,Z,0,0)\leq\Phi(\Omega^{\lambda}(0,Z,0,0))+\lambda\leq\Phidim(\D_{\Lambda})+\lambda.$$ So, $\Phidim(\U')\leq \Phidim(\D_{\Lambda})+\lambda < \infty.$
\end{proof}

We have a similar result for special GLIT algebras.

\begin{teo}\label{teo:Morita_special_GLIT}
	Let $\mathcal{M}=(T, N, M, U, \alpha, \beta)$ be a Morita context such that $_{U}M, M_T,$ $_{T}N, N_U$ are projective modules and $M\otimes_T N = N\otimes_U M =0.$ Then, the algebra $\Lambda_{\mathcal{M}}=\begin{pmatrix}
		T & N \\
		M & U
	\end{pmatrix}$ is special GLIT if and only if $T$ and $U$ are special GLIT.
\end{teo}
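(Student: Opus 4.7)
By Proposition \ref{teo:special_GLIT_findim} together with Definition \ref{defi:special_GLIT}, an Artin algebra $\Sigma$ is special GLIT if and only if $\findim(\Sigma)<\infty$, so the plan is to prove the quantitative estimate
\[
\max\{\findim(T),\findim(U)\}\ \leq\ \findim(\Lambda_{\mathcal{M}})\ \leq\ \max\{\findim(T),\findim(U)\}+1.
\]
The main tool is Lemma \ref{Syzygy_triang} combined with Remark \ref{rmk:Syzygy_triang}(1), which for every $n\geq 1$ gives
\[
\Omega^{n}(A,B,f,g)=(\Omega^{n}A,\,M\otimes_{T}P^{A}_{n-1},\,1\otimes i_{n},\,0)\oplus(N\otimes_{U}P^{B}_{n-1},\,\Omega^{n}B,\,0,\,1\otimes i_{n}).
\]

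The key observation, which I expect to be the main delicate point, is that whenever $P\in\proj\,T$ the module $(0,\,M\otimes_{T}P,\,0,\,0)$ is projective in $\modu\,\Lambda_{\mathcal{M}}$: indeed $Q:=M\otimes_{T}P$ lies in $\proj\,U$ because $_{U}M$ is projective, and $N\otimes_{U}Q=N\otimes_{U}M\otimes_{T}P=0$ forces the indecomposable projective $(N\otimes_{U}Q,Q,0,1)$ to coincide with $(0,Q,0,0)$. Symmetrically, $(N\otimes_{U}Q',0,0,0)$ is projective for every $Q'\in\proj\,U$. Both identifications rely crucially on the one-sided projectivity of the bimodules and on the vanishing $M\otimes_{T}N=N\otimes_{U}M=0$.

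For the lower bound I would use the embedding $A\mapsto(A,0,0,0)$. If $\pd_{T}A=k<\infty$, iterating the syzygy formula and the key observation gives $\Omega^{k+1}(A,0,0,0)=(0,\,M\otimes_{T}P^{A}_{k},\,0,\,0)$ projective, so $\pd_{\Lambda_{\mathcal{M}}}(A,0,0,0)\leq k+1$. Conversely, if $\pd_{\Lambda_{\mathcal{M}}}(A,0,0,0)=m<\infty$, reading off the first coordinate of the projective module $\Omega^{m}(A,0,0,0)$ shows $\Omega^{m}A$ is projective in $\modu\,T$, so $\pd_{T}A\leq m$. Combining the two bounds yields $\findim(T)\leq\findim(\Lambda_{\mathcal{M}})$; the argument for $U$ is symmetric.

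For the upper bound, take $(A,B,f,g)$ with $\pd_{\Lambda_{\mathcal{M}}}(A,B,f,g)<\infty$. The same coordinate argument applied to the decomposition above forces $\pd_{T}A\leq\findim(T)$ and $\pd_{U}B\leq\findim(U)$, both finite. Choosing $n=\max\{\findim(T),\findim(U)\}+1$ makes $\Omega^{n}A$ and $\Omega^{n}B$ vanish modulo projective summands, so the decomposition collapses to $(0,M\otimes_{T}P^{A}_{n-1},0,0)\oplus(N\otimes_{U}P^{B}_{n-1},0,0,0)$, which is projective by the key observation. Hence $\pd_{\Lambda_{\mathcal{M}}}(A,B,f,g)\leq n$, closing the estimate and completing the equivalence.
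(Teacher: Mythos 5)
Your proof is correct, but it takes a genuinely different and more elementary route than the paper, and it is worth spelling out the comparison. The paper derives the equivalence by first proving the $\Phi$-dimension estimate of Theorem~\ref{teo:Phi_Dim_Morita}, which rests on the whole chain Lemma~\ref{lem:relation_A_and_(A,P,f)} $\Rightarrow$ Proposition~\ref{pro:syzygy_n+1} $\Rightarrow$ Lemma~\ref{lem:(A,P,f)_plus_(0,B,0)} $\Rightarrow$ Theorem~\ref{teo:inequality_Phi_triangular}, and then converts this into a finitistic-dimension bound $\max\{\findim(T),\findim(U)\}\leq\findim(\Lambda_{\mathcal M})\leq\max\{\findim(T),\findim(U)\}+2$ using the fact that $\Phi$ agrees with $\pd$ on modules of finite projective dimension. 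You bypass the Igusa--Todorov function machinery entirely: you work directly with projective dimensions, using only the syzygy formula of Lemma~\ref{Syzygy_triang} / Remark~\ref{rmk:Syzygy_triang} together with your ``key observation'' that $(0,M\otimes_T P,0,0)$ and $(N\otimes_U Q,0,0,0)$ are projective $\Lambda_{\mathcal M}$-modules, which indeed hinges precisely on the hypotheses that $_UM$, $M_T$, $_TN$, $N_U$ are projective and $M\otimes_T N=N\otimes_U M=0$. This buys you a sharper quantitative bound, $\findim(\Lambda_{\mathcal M})\leq\max\{\findim(T),\findim(U)\}+1$, rather than the paper's $+2$; of course either constant suffices for the ``if and only if'' statement. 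One point worth being a little careful about in a final write-up: the syzygy formula of Lemma~\ref{Syzygy_triang} produces the kernel of the surjection from the ``coordinatewise'' projective $(P^0_A,M\otimes_T P^0_A,1,\beta)\oplus(N\otimes_U Q^0_B,Q^0_B,\alpha,1)$, which need not be the projective cover of $(A,B,f,g)$ (e.g.\ applied to a projective $(P,M\otimes_T P,1,\beta_P)$ it returns $(0,M\otimes_T P,0,0)\neq 0$). Your argument is nevertheless sound, because in both directions you only use that this gives \emph{some} $m$-th syzygy: for the upper bound, $\Omega^n$ projective certainly forces $\pd\leq n$; for the lower bound, if $\pd_\Lambda(A,0,0,0)=m<\infty$ then by Schanuel any $m$-th syzygy is projective, and its first coordinate $\Omega^m_T A$ is a direct summand of the first coordinate of a projective $\Lambda$-module, which is projective in $\modu\,T$ because $_TN$ is projective. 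So the step ``reading off the first coordinate'' is justified but deserves this one-line justification.
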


\begin{proof}
	First we recall that being special GLIT is the same as having finite finitistic dimension. In addition it is not hard to show that $(A,B,f,g)\in\modu\Lambda_{\mathcal{M}}$ has finite projective dimension if and only if $A$ and $B$ have finite projective dimension. Then, using Theorem \ref{teo:Phi_Dim_Morita}, together with the fact that the $\Phi$ function coincides with projective dimension if the last one is finite, we get: $$\max\{\findim(T),\findim(U)\}\leq \findim(\Lambda_{\mathcal{M}})\leq \max\{\findim(T),\findim(U)\} + 2.$$
	Hence, $\Lambda_{\mathcal{M}}$ is special GLIT if and only if $T$ and $U$ are special GLIT.\\
\end{proof}

\begin{remark}\label{rmk:findim_Morita}
    We recall that special GLIT algebras are exactly those that satisfy the finitistic dimension conjecture, therefore the previous theorem states that $\findim(\Lambda_{\mathcal{M}})<\infty$ if and only if $\findim(T)<\infty$ and $\findim(U)<\infty$. After carefully reviewing the literature we were able to find that the direct implication can also be obtained from \cite[Theorem 5.1 and Theorem 5.5(i)]{Psadu} and also from \cite[Proposition 10]{Kirk}. Note that for the converse implication we cannot use \cite[Theorem 5.5(iii)]{Psadu} since the hypotheses are not satisfied, so as far as we know, there are no published results that prove the converse, which came as a surprise to us, particularly since it was not in the objectives of this paper.
\end{remark}

As a direct consequence of Theorems \ref{teo:Morita_GLIT} and \ref{teo:Morita_special_GLIT} we obtain the following corollary which gives necessary and sufficient conditions for a triangular matrix algebra to be GLIT or special GLIT. The question of when a triangular algebra is IT or LIT has been studied in \cite{BLM} and \cite{Vivero}.

\begin{cor}\label{cor:Triang_GLIT}
	Let $T$ and $U$ be Artin algebras and $_{U}M_T$ a $U$-$T$-bimodule such that $_{U}M$ and $M_T$ are projective. Then, the triangular matrix algebra $\begin{pmatrix}
		T & 0 \\ M & U
	\end{pmatrix}$ is (special)GLIT if and only if $T$ and $U$ are (special)GLIT.
\end{cor}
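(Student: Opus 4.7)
The plan is to realise the triangular matrix algebra as a special case of the Morita algebra framework developed in the previous theorems, and then invoke them directly. Concretely, the triangular matrix algebra $\begin{pmatrix} T & 0 \\ M & U \end{pmatrix}$ is precisely the Morita algebra $\Lambda_{(0,0)}(\mathcal{M})$ associated to the Morita context $\mathcal{M} = (T, 0, M, U, 0, 0)$, in which one takes the $T$-$U$-bimodule $N$ to be zero (which forces both pairings $\alpha$ and $\beta$ to be the zero maps).

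With this identification in hand, I would verify that all the hypotheses required by Theorems \ref{teo:Morita_GLIT} and \ref{teo:Morita_special_GLIT} are automatically satisfied. First, $_UM$ and $M_T$ are projective by assumption. Second, $_TN = 0$ and $N_U = 0$ are trivially projective since they are the zero module. Third, the tensor products $M\otimes_T N$ and $N\otimes_U M$ both vanish automatically because $N = 0$. Hence the Morita context $\mathcal{M}$ fulfils the standing conditions of Section 4.

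Having checked this, the corollary follows by a direct appeal: Theorem \ref{teo:Morita_GLIT} yields that $\Lambda_{(0,0)}(\mathcal{M})$ is GLIT if and only if $T$ and $U$ are GLIT, and Theorem \ref{teo:Morita_special_GLIT} yields the analogous equivalence in the special GLIT setting.

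There is no real obstacle here, since the corollary is precisely the specialisation $N = 0$ of the already established Morita context theorems; the only point worth stressing is that taking $N = 0$ forces the Morita ring to have the lower triangular shape and makes $\alpha, \beta$ vanish, so that one is genuinely in the $\Lambda_{(0,0)}$ setting used throughout Section 4.
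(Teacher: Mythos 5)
Your proof is correct and coincides with the paper's approach: the corollary is stated as a direct consequence of Theorems \ref{teo:Morita_GLIT} and \ref{teo:Morita_special_GLIT}, obtained exactly by specialising the Morita context to $N=0$ (so $\alpha=\beta=0$ and the required hypotheses on $N$ and the vanishing tensor products hold trivially). Nothing to add.
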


We finish the paper with an application of the previous result, obtaining a partial answer to the question of weather the tensor product of GLIT algebras is again GLIT.

\begin{pro}\label{pro:tensor_tree}
Let $T$ be a GLIT $\K$-algebra and $Q$ a finite quiver without oriented cycles. Then $T\otimes_{\K}\K Q$ is GLIT.
\end{pro}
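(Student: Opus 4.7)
My plan is to proceed by induction on the number of vertices of $Q$, reducing at each step to a triangular matrix algebra and invoking Corollary \ref{cor:Triang_GLIT}. The base case is $Q$ with a single vertex and no arrows (any loop would create an oriented cycle), so $\K Q=\K$ and $T\otimes_{\K}\K Q\cong T$ is GLIT by hypothesis. For the inductive step, I would use the fact that any finite acyclic quiver has at least one sink $v$; let $Q'$ denote the full subquiver obtained by deleting $v$ and all arrows ending at $v$, so $Q'$ has one fewer vertex than $Q$.

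The key structural observation is that if $e=e_v$, then $e\K Q e=\K e$ (since $v$ is a sink, the only path from $v$ to $v$ is trivial) and $(1-e)\K Q e=0$ (no path starts at $v$ and lands elsewhere, again since $v$ is a sink), so
$$\K Q\;\cong\;\begin{pmatrix} (1-e)\K Q(1-e) & 0 \\ e\K Q(1-e) & e\K Q e\end{pmatrix}\;=\;\begin{pmatrix} \K Q' & 0 \\ M' & \K\end{pmatrix},$$
where $M':=e\K Q(1-e)$ is naturally a $\K$--$\K Q'$-bimodule. Tensoring with $T$ over $\K$ gives
$$T\otimes_{\K}\K Q\;\cong\;\begin{pmatrix} T\otimes_{\K}\K Q' & 0 \\ T\otimes_{\K}M' & T\end{pmatrix}.$$
By the inductive hypothesis $T\otimes_{\K}\K Q'$ is GLIT, and $T$ is GLIT by hypothesis, so to apply Corollary \ref{cor:Triang_GLIT} it only remains to check that $T\otimes_{\K}M'$ is projective on both sides.

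The left $T$-action on $T\otimes_{\K}M'$ is just scalar action on the first factor, so $T\otimes_{\K}M'\cong T^{\dim_{\K}M'}$ as a left $T$-module, which is free. For the right $T\otimes_{\K}\K Q'$-module structure, the main content is showing that $M'$ itself is projective as a right $\K Q'$-module. I would verify this by writing $M'=\bigoplus_{w\neq v}e_v\K Q e_w$ and observing that every path from $w\in Q'$ to the sink $v$ factors uniquely as $\alpha_i\cdot p$, where $\alpha_i$ ranges over arrows of $Q$ ending at $v$ (with source $w_i\in Q'$) and $p$ is a path in $Q'$ from $w$ to $w_i$. This gives an isomorphism $M'\cong\bigoplus_i e_{w_i}\K Q'$ of right $\K Q'$-modules, so $M'$ is projective, and hence $T\otimes_{\K}M'\cong\bigoplus_i(T\otimes e_{w_i})\cdot(T\otimes_{\K}\K Q')$ is a projective right module. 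Then Corollary \ref{cor:Triang_GLIT} finishes the induction.

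The main technical step, and the place where the acyclicity of $Q$ is genuinely used, is the factorisation of paths through the sink $v$ that exhibits $M'$ as a direct sum of indecomposable projectives over $\K Q'$; everything else is bookkeeping around the triangular matrix decomposition and the behaviour of tensor products under the change of scalars $\K\to T$.
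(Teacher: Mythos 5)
Your proof is correct and follows essentially the same route as the paper: induction on the number of vertices, peeling off a sink to exhibit $T\otimes_{\K}\K Q$ as a triangular matrix algebra, and invoking Corollary \ref{cor:Triang_GLIT}. The only difference is that the paper cites the block description of $T\otimes_{\K}\K Q$ from Assem--Simson--Skowro\'nski and asserts two-sided projectivity of the bimodule, whereas you derive the decomposition directly and spell out the path-factorisation argument through the sink; this is a welcome piece of added detail but not a different proof.
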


\begin{proof}
	From \cite[Lemma 1.12 ]{AssemRT} we know that $$T\otimes_{\K}\K Q=\begin{pmatrix}
	T^{d_{1,1}} & 0 & \cdots & 0 \\
	T^{d_{2,1}} & T^{d_{2,2}} & \cdots & 0 \\
	\vdots & \vdots & \vdots & \vdots\\
	T^{d_{n,1}} & T^{d_{n,2}} & \cdots & T^{d_{n,n}}
	\end{pmatrix},$$ where $d_{i,j}=\dim_{\K}\epsilon_i\K Q\epsilon_j$. Because of the way this isomorphism is defined one can see that this algebra can be thought of as a triangular matrix algebra of the form $$\left( \begin{array}{ccc|c}
		T^{d_{1,1}} & 0 & \cdots & 0 \\
		T^{d_{2,1}} & T^{d_{2,2}} & \cdots & 0 \\
		\vdots & \vdots & \vdots & \vdots\\
		\hline
		T^{d_{n,1}} & T^{d_{n,2}} & \cdots & T^{d_{n,n}}
	\end{array}\right),$$ where the bimodule $M=(T^{d_{n,1}}, T^{d_{n,2}}, \cdots, T^{d_{n,n-1}})$ is free as a left $T^{d_{n,n}}=T$-module and projective as a right module over the top left matrix algebra, since it is a direct sum of certain number of copies of each of the previous rows. Using induction on $n$ we get the desired result by applying Corollary \ref{cor:Triang_GLIT}.
\end{proof}

%------------------------------------------------------------------------------------------------------------
%\section{Applications of Theorem \ref{teo:Triang_GLIT}.}

%\section*{Declarations} 
%\begin{itemize}
%\item The author did not receive financial support from any organization for the submitted work.
%\item The author has no relevant financial or non-financial interests to disclose.
%\item The author has no conflicts of interest to declare.
%\item Data sharing not applicable to this article as no datasets were generated or analysed during the current study.
%\end{itemize}

%----------------------------- BIBLIOGRAPHY-----------------------------------------------------------------
\bibliographystyle{unsrt}

\begin{center}

\end{center}

\end{document}